\theoremstyle{plain}
\newtheorem{theorem}{Theorem}
\newtheorem{lemma}{Lemma}
\newtheorem{corollary}{Corollary}
\newtheorem{proposition}{Proposition}
\theoremstyle{definition}
\newtheorem{definition}{Definition}
\theoremstyle{remark}
\newtheorem{remark}{Remark}
\numberwithin{equation}{section}
\begin{document}
\title[Yang Mills Higgs]{Removeability of a  Codimension Four Singular Set for  Solutions of a Yang Mills Higgs Equation with Small Energy}
\author{Penny Smith}
\curraddr{
Department of Mathematics Lehigh University 14 E. Packer Ave, Bethehem Pa. 18015 }
\email{
ps02@lehigh.edu, penny314@gmail.com }
\author{Karen Uhlenbeck}
\curraddr{
School of Mathematics Institute for Advanced Study, 1 Einstein Drive
Princeton, New Jersey
08540 USA}
\email{
uhlen@math.utexas.edu, karen.uhlen42@gmail.com}
\date{
June 20, 2018
}
\today
\subjclass[2010]{Primary 05C38, 15A15; Secondary 05A15, 15A18}
\keywords{Keyword one, keyword two, keyword three}
\begin{abstract}
The Yang-Mills-Higgs equations, while they arose in theoretical physics, have become a central object of study in geometric analysis.  The moduli spaces of solutions  are used to study problems in algebraic geometry as well as topology.  To understand these moduli spaces, it is natural to ask what sort of objects are limits in a weak sense of sequences of solutions.  In many examples, it can be shown that subsequences of solutions converge to a solution off a set of Hausdorff codimension 4.  In four dimensions, these point singularities are removable,  The question arises of when this singular set of dimension n-4 is removable in higher dimensions.
In a classic paper, Tao and Tian  [10] prove an important step in the removability theorem.  What they show is that, in general, independent of the equation, if the case the curvature of a connection defined on a set of Hausdorff codimension 4 is sufficiently small in a certain Morrey space, that there exists a smaller ball and a gauge in which the connection is $d + A$, where $A$ is coclosed and bounded by norms on the curvature. One can now treat the Yang-Mills-Higgs equations as a standard elliptic system and obtain further regularity.  This choice of Morrey space is natural, because monotonicity of solutions provides estimates exactly in this space.  However, since the solution  is only defined off a set of Hausdorff codimension 4, it is not known that the monotonicity theorem is true for limiting singular solutions. Only by adding the assumption of stationary with respect to diffeomorphisms in the entire domain can one apply the theorem to limiting singular connections.
Unfortunately, we cannot fill in this gap.  We provide a new proof of the Tao-Tian result.  Our proof  uses the equation and a maximum principle for a differential inequality derived from the equation via averaging.   In this manner, we can avoid the geometric construction of a good gauge in a weak setting via averaging exponential gauges, as we first obtain further estimates on the curvature from the inequality.  Our main result ( Corollary \ref{C:scaledandtranslatedgaugeequivalentregularitytheorem} ) is that, if the curvature is sufficiently small in a Morrey space, and the solution is defined and smooth  off a set of Hausdorff codimension 4 with the covariant derivative of the Higgs field in the same Morrey space, then the solution is smoothly gauge equivalent to one which extends smoothly over the singular set. In the case that the solution has small energy, and is a critical point of the integral, or even just critical with respect to diffeomorphisms in the entire domain, not just off the singular set, the smallness of the curvature in the Morrey space in a smaller domain can be verified and leads to Theorem 11 as a corollary of Corollary \ref{C:scaledandtranslatedgaugeequivalentregularitytheorem}.  There may be other ways to obtain the bounds on the curvature in the Morrey space; hence we state the main result in terms of small curvature  in the Morrey space. 
The results apply to most of the coupled Yang-Mills-Higgs equations, and we do not go into the details in this paper.  We do assume that the Higgs field is bounded, and explain 
why this follows from $L^2$ bounds on it.   Many interesting equations, such as the Kapustin-Witten equations and the equations for a complex flat connection do not come with such a bound on sequences.  Moreover, the singular set of a renormalized limit is Hausdorff codimension 2, and we sadly have nothing to say about these singularities..

We develop a new method for proving regularity for small energy stationary solutions of coupled gauge field equations. Our results duplicate those of [10] for the pure Yang Mills equations, but our proof is simpler, and obtains bounded curvature without the use of Coulomb gauges. It relies instead on the Weitzenblock formulae, and an improved Kato inequality. Our results also extend and simplify those of [3].
\end{abstract}
\maketitle
\section{Introduction
}\label{S:ScalingandmaxPHI}
 We consider stationary solutions of the Euler Lagrange equations for the integral
 \begin{equation}\label{E:eulerlagrange}
 \mathcal{A}(D_{A},\Phi)=
 \int_{\Omega} \left( \vert F_{A} \vert^{2} +
 \vert D_{A} \Phi \vert^{2} +
 Q(\Phi) \right) \, (dx)^{n},
\end{equation}
for $\Omega \subset \mathbb{R}^{n}$ $n \geq 4$, and $D_{A}$ is a unitary connection
on the product bundle $V \times 
\Omega$, $\Phi$ is a section of the product bundle $V \times \Omega$ , and where $Q(\bullet)$ is a real valued equivariant function from sections of the product bundle $V \times \Omega$.
The integrand on the right hand side of \eqref{E:eulerlagrange} is gauge invariant, but since the result is local, we do not need any of the topological considerations in its formulation. 
A typical term in $Q(\Phi )$ can be 
$\vert ( \vert \Phi \vert^{2} - \lambda^{2} )\vert^{2} $ if $V = \mathfrak{su}(N)$,
but we impose only three conditions on $Q$, namely

\begin{subequations}\label{E:conditionsonQ}
\begin{equation}\label{E:firstconditiononQ}
Q(\Phi) \geq 0 
\end{equation}
\begin{equation}\label{E:secondconditiononQ}
(\Phi,  Q_{\Phi}(\Phi ) )
\geq -K_{1}^{2}. 
\end{equation}

Here $Q_{\Phi}(\Phi )$ is the directional derivative of the function $Q$, in the
direction $\phi$, and, hence, $Q_{\Phi}(\Phi )$ is a section of $V^{*}$. By the usual duality argument, we can identify it with a section of $V$.
\begin{equation}\label{E:groupinvarianceofQ}
Q(\Phi) \, \text{is equivariant with respect to the group action.}
\end{equation}
\end{subequations}

First, we state the Euler--Equations for the energy functional \eqref{E:eulerlagrange}
\begin{subequations}\label{E:eulerlagrangeeqs}
\begin{equation}\label{E:firsteulerlagrangeeq}
2(D_{A} )^{*} F_{A} = [\Phi, D_{A} \Phi] 
\end{equation}
\begin{equation}\label{E:secondeulerlagrangeeq}
2(D_{A})^{*} D_{A} \Phi =  Q_{\Phi}(\Phi	).
\end{equation}
\end{subequations}

As in the previous work on this subject, monotonicity theorems and change of scale are key ingredients.
We generally only change scale by blow-up, from $\vert x - x_{0} \vert  \leq r$ to $ \vert y \vert = \left(\frac{\vert x- x_{0} \vert} {r} \right) R$, for $0 <r <R$.
Since, we are in a local trivialization, $D_{A} = d +A$ is a local covariant exterior
derivative with fixed scaling given by
\begin{equation}\label{E:scalingforA}
A dx = \tilde{A} dy = (\tilde{A})( \frac{R}{r}) dx,
\end{equation}
which implies
$\tilde{A} = (\frac{r}{R}) A$. Consistency of the first two terms in the integrand on the right hand side of \eqref{E:eulerlagrange} demands that
$\Phi(x) =\tilde{\Phi} (y) (\frac{r}{R})$
,that is that $\Phi$ scales like a 1-form.
This requires $Q(\Phi) $ to scale like a 4-form.
That is
\begin{equation}\label{E:Qscaling}
\tilde{Q_{R}}(\tilde{\Phi}) = \left(\frac{R}{r}\right)^{4} (Q_{r})(\Phi)= 
\left(\frac{R}{r}\right)^{4} Q(\left(\frac{r}{R}\right) \tilde{\Phi}).	\end{equation}
This looks formidable, however, we are saved by \eqref{E:conditionsonQ} and Theorem
\eqref{T:secondpointwiseboundonphi}, which provides bounds on terms depending on $\vert \Phi \vert$, and hence bounds on terms depending on $Q$.

Singular solutions of \eqref{E:eulerlagrangeeqs} may arise as limits of smooth solutions.
The following theorem indicates that, if there is a uniform bound in $L^{2}$
on the sequence of Higgs Fields $ \Phi_{j}$, there will also be a bound on the maximum of $ \vert \Phi_{j} \vert$, and hence on the maximum of $ \vert \Phi \vert$ in the singular limit.

\begin{theorem}\label{T:secondpointwiseboundonphi}
 Let $ ( D_{A}, \Phi )$ be a smooth solution of the field equations \eqref{E:eulerlagrange} in the domain $\Omega \subset \mathcal{R}^{n}$.
 Let, $  \Phi \bullet Q_{\Phi}(\Phi) \Phi  \geq  -(K_{1}) $, where $ \bullet$ denotes inner product, 
 and let $ \int_{\Omega} \vert \Phi \vert^{2} \, (dx)^{n} \leq (K_{2})^{2}  $. Then $\vert \Phi \vert$ is  bounded in the interior of $ \Omega$ and for all $ x_{0} \in int(\Omega)$, $ 0< d < dist(x_{0}, \partial \Omega)$
\begin{equation}\label{E:basicmorreyboundonphi}
\vert \Phi(x_{0}) \vert^{2}  \leq 
((d)^{-n})C_{1}(n) 
) 
\int_{\vert x-x_{0}\vert \leq d}
  \vert \Phi \vert^{2}\, (dx)^{n} +
  C_{2}(n) K^{2}_{1} d^{2}.
\end{equation}
\end{theorem}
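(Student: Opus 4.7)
The plan is to derive a scalar Bochner--Weitzenb\"ock type inequality for $u=|\Phi|^2$ and then conclude by the classical mean value inequality for weak subsolutions of Poisson's equation. Everything is done at the level of the gauge-invariant scalar $|\Phi|^2$, so no gauge fixing is required; this accounts for the ``simpler proof'' advertised in the abstract, and at this stage no improved Kato inequality is needed either.

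First I would compute $\Delta |\Phi|^2$ directly in the trivialization. Since $D_A$ is unitary, $\partial_i |\Phi|^2 = 2\,\Phi \bullet D_{A,i}\Phi$, and differentiating once more yields
\begin{equation*}
\tfrac{1}{2}\Delta |\Phi|^2 \;=\; |D_A \Phi|^2 \;+\; \Phi \bullet \textstyle\sum_i D_{A,i}^2 \Phi .
\end{equation*}
The second Euler--Lagrange equation \eqref{E:secondeulerlagrangeeq} then lets me trade the rough Laplacian of $\Phi$ for a pointwise expression in $Q_{\Phi}(\Phi)$, producing a Bochner identity of the schematic form
\begin{equation*}
\Delta |\Phi|^2 \;=\; 2\,|D_A \Phi|^2 \;+\; \Phi \bullet Q_{\Phi}(\Phi).
\end{equation*}
Discarding the nonnegative gradient term and invoking the hypothesis $\Phi \bullet Q_{\Phi}(\Phi) \geq -K_{1}^{2}$ gives the key scalar inequality $\Delta |\Phi|^2 \geq -K_{1}^{2}$. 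Equivalently, the modified function $v(x) := |\Phi(x)|^2 + \tfrac{K_{1}^{2}}{2n}|x-x_{0}|^2$ is subharmonic on each Euclidean ball $B_{d}(x_{0}) \subset \Omega$, since $\Delta\!\left(\tfrac{1}{2n}|x-x_{0}|^2\right)=1$.

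The classical mean value inequality for nonnegative subharmonic functions then gives $v(x_{0}) \leq |B_{d}|^{-1}\int_{B_{d}(x_{0})} v\,(dx)^{n}$. Substituting the definition of $v$ back in, using $|B_{d}|=\omega_{n} d^{n}$, and bounding $\int_{B_{d}} |x-x_{0}|^2\,(dx)^{n}$ by a dimensional constant times $d^{n+2}$, one recovers exactly \eqref{E:basicmorreyboundonphi} with explicit dimensional constants $C_{1}(n),C_{2}(n)$. Note that the global $L^{2}$ bound $\int_{\Omega}|\Phi|^{2} \leq K_{2}^{2}$ plays no role in the pointwise inequality itself; it is used only to guarantee finiteness of the right-hand side and hence interior boundedness of $|\Phi|$.

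The only real subtlety is the sign bookkeeping in the Bochner identity: one must be careful to combine the sign convention for $(D_A)^{*}D_A$ with \eqref{E:secondeulerlagrangeeq} so that the lower bound $\Phi \bullet Q_{\Phi}(\Phi) \geq -K_{1}^{2}$ actually forces $\Delta |\Phi|^{2}$ to be bounded \emph{below} (rather than above), which is what drives the mean value argument. Once that is arranged the remainder is a textbook subharmonic mean value estimate, and I expect no further obstacle.
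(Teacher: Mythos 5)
Your argument is correct and is essentially the paper's own proof: the same Bochner-type identity for $|\Phi|^2$, substitution of the second Euler--Lagrange equation, the lower bound $\Phi\bullet Q_{\Phi}(\Phi)\geq -K_{1}^{2}$ to get almost-subharmonicity, addition of a quadratic correction term, and the sub-mean value inequality. Your observation that the $L^{2}$ hypothesis only guarantees finiteness of the right-hand side, and your caution about the sign convention in $(D_A)^{*}D_A$, are both consistent with the paper's treatment.
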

\begin{proof}
We will make use of the following identity:

\begin{equation}\label{E:laplacianidentity}
(\frac{1}{2}) \triangle \vert \Phi \vert^{2} = (\Phi) \bullet (D^{*}_{A}D_{A}
\Phi) + \vert D_{A} \Phi\vert^{2}.
\end{equation}
The equation \eqref{E:laplacianidentity} follows from the facts that $\Phi$ is locally a bundle section and the compatibilty of the connection with the inner product on sections.

Here $\triangle$ is the co-ordinate laplacian on the base.
We use the the field equation \eqref{E:secondeulerlagrangeeq} to replace the first term on the right hand side of \eqref{E:laplacianidentity}, and obtain
\begin{equation}\label{E:modifiedlaplacianestimatefornormphisquared}
 \triangle \vert \Phi \vert^{2} = (\Phi) \bullet ( Q_{\Phi}(\Phi)) +  2\vert D_{A} \Phi\vert^{2}.
\end{equation}
 Using the lower bound $ \Phi 
 \bullet 
 Q_{\Phi}(\Phi) 
  \geq  -(K_{1})^{2} $ in 
\eqref{E:modifiedlaplacianestimatefornormphisquared}, we obtain:
\begin{equation}\label{E:almostsubharmonicboundforphisquared}
 \triangle \vert \Phi \vert^{2} \geq -K_{1}^{2}.
\end{equation}
Thus, the function
$\vert \Phi \vert^{2} +
 (x-x_{0})^{2}
 K_{1}
 ^{2} $ is non-negative smooth subharmonic function. Now apply the sub-mean value property for subharmonic functions, the triangle inequality for integrals, the triangle inequality for sums, and take the sup of $(x-x_{0})^{2}K_{1}^{2}$ inside the integral over $B_{d}(x_{0}) \subset \subset \Omega$.
	\end{proof}

We can use these two estimates in several ways, depending on the actual structure of
$ Q(\Phi)$.
The terms that we have in mind are terms like
$ ( \vert \Phi \vert^{2} -a^{2} )^{p}$, or $\vert [\Phi, \Phi ] \vert^{p} $, for $ p \geq 1$, where the simple cases occur for $p =2$.

We just gave a description of the functional and motivated the bound that we assume on the maximum of $\Phi$ in subsequent chapters. In chapter 2 we outline a proof of the monotonicity theorem. This motivates the assumption we place on the Morrey Space norm of the curvature $F_{A}$ in our final theorem.
We emphasis that for its validity, it requires the critical point $( D_{A}, \Phi)$ to be stationary with respect to diffeomorphisms that are the identity near the boundary of the domain.
Chapter 3 contains the necessary improvement to the usual Kato inequality
$ \vert d \vert \upsilon \vert \vert \leq \vert \nabla \upsilon \vert$
where $\upsilon = (F_{A}, D_{A} \Phi)$, as well as a reminder of the 
Weitzenbock formulae for the same quantity. In this section, as in chapter 1,
our computations hold only on the set in which the connection is smooth.
Estimates for smooth solutions are contained in chapter 4.
Chapter 4 is not essential but the estimates for smooth solutions with curvature small in a Morrey Space are easier to obtain and provide a 
warm-up for the singular case.

The core of the paper is in Chapter 5. Here we use the properties of Morrey Spaces to get regularity for singular solutions of a differential inequality (on functions $f$)with a coefficent small in a borderline Morrey space. The theorem that we prove is Theorem 9:

\textbf{Theorem 9} :\\
Let $u \geq 0$ and $f>0$ be smooth functions on $\Omega_{4} - \mathcal{S}$,
where $\mathcal{S}$ is a closed set of finite $n-4$ Hausdorff dimension.
If
\begin{equation}\label{E:singularsubinequality}
-\triangle f + \alpha \left( \frac{\vert df \vert }{f} \right) -uf \leq Qf,
\end{equation}
then there exist constants $\eta_{k} > 0$, and $\kappa_{k} > 0$, such that
if $\Vert u \Vert_{X^{2}(\Omega_{4}) } < \eta_{k}$, then $f \in X^{k}(\Omega_{1} )$. Moreover

\begin{equation}\label{E:morreyboundonfsingularcase}
\Vert f \Vert_{X^{k}(\Omega_{1} ) } \leq \kappa_{k} 
\Vert f \Vert_{X^{2}(\Omega_{4}) }.
\end{equation}

Chapter 6 is a straightforward application of Theorem 9 and the improved Kato inequality of chapter 3.

The idea of using differential inequalities to get
estimates of the norms of curvature and covariant derivative of the
Higgs field, and to  thus to remove high codimension singular sets  is
due to Tom Otway [5].

We have Theorem 10:

\textbf{Theorem 10} :\\
Let $( D_{A},  \Phi )$ be solutions to a Yang-Mills-Higgs system in 
$\Omega_{4} -\mathcal{S} $, where $\mathcal{S}$ is a closed set of finite $n-4$
dimensional Hausdorff measure. Let $ \upsilon = ( F_{A}, D_{A} \Phi )$. Assume that $\upsilon \in X^{2}(\Omega_{4} )$, and that $\Phi \in L^{\infty}(\Omega_{4} )$. 
\begin{equation}\label{E:definitionofQsub1}
Q_{1}  = \underset{[-4,4]^{n}}{sup}
( 2 \vert \Phi \vert + 
 \vert Q_{\Phi,\Phi}(\Phi) ) \vert
\end{equation}
\begin{equation}\label{E:definitionofQsub2}
Q_{2}^{2} = \underset{[-4,4]^{n}}{sup} 
\left(\frac{\vert  Q_{\Phi}(\Phi)\vert^{2}}{Q_{1}}
\right).
\end{equation}
If $F_{A} \in X^{2}(\Omega_{4})$ is sufficently small, then $ \vert \upsilon \vert  \in L^{\infty}(\Omega_{1})$.
We also have the explicit bound 
\begin{equation}\label{E:L2nboundonupsilon}
\Vert \upsilon \Vert_{L^{2n}(\Omega_{1})} \leq
C(Q_{1}) ( \Vert \upsilon \Vert_{X^{2}(\Omega_{4})} + Q^{2}_{2} )
\end{equation}

We state the main regularity theorem as a corollary of Theorem 10.
It applies the Coulomb gauge construction of appendix C.
We have there:
 \\
\textbf{Corollary 4} :\\
Assume $( D_{\tilde{A}}, \Phi )$ satisfies a Yang-Mills-Higgs system in $\Omega - \mathcal{S}$, where $\mathcal{S}$ is a closed set of finite $n-4$ dimensional Hausdorff measure.
Let $\upsilon = ( F_{\tilde{A}}, D_{\tilde{A}} \Phi )$
 
\begin{equation}\label{E:defintionofmodifiedQ1}
Q_{1} = \underset{\Omega}{sup} \left( \vert  Q(\Phi) \vert^{2} +\ \vert  Q_{\Phi,\Phi}(\Phi) \vert \right).
\end{equation}

\begin{equation}\label{E:definitionofmodifiedQ2}
Q_{2}^{2} = \frac {\left( \underset{\Omega}{sup} \vert Q_{\Phi}(\Phi) \vert^{2}
	\right)}{ Q_{1} }.
\end{equation}
Suppose $\upsilon \in X^{2}(\Omega)$, and $Q_{1} \delta^{2}$ as well as
$Q_{2} \delta^{2}$ (scales like the two form $\upsilon$) are bounded by a fixed constant. In addition suppose that $ F_{\tilde{A}} \in X^{2}(\Omega)$ has small enough $X^{2}(\Omega)$ norm (independent of the other constants).
If $\Omega_{y,\delta} \subset \Omega$, then $\delta^{2} \upsilon \in L^{\infty}(\Omega_{y, \delta})$ is bounded above by a constant, and $
(D_{\tilde{A}}, \Phi )$
are smoothly gauge equivalent on 
$\Omega_{y,\delta}- \mathcal{S} $
 to a smooth exterior covariant differential (corresponding to a smooth connection), and a smooth Higgs Field on $\Omega_{y,\delta}$, both of which extend smoothly across the singular set $ \mathcal{S} \cap \Omega_{y,\delta} $.
\\

The Appendices are important. In Appendix A we outline the necessary background
on Morrey Spaces. We refer the reader to the book by Adams [1], and give outlines of applications that are not in this reference, such as the invertibility of the Laplacian on cubes with Dirichlet boundary value.
We introduce notation such as the space $X^{k} = M^{[\frac{n}{k},  \frac{4}{k} ]}$ for the Morrey space which scales like $\frac{n}{k}$ and has integral power $\frac{4}{k}$. This simplifies our exposition.
Appendix B contains the maximum principle that we use in chapter 5, which seemed to fit better in an appendix than in the body of the paper.
Appendix C is the proof of the existence of a Coulomb gauge for singular connections. This is not in the standard literature because we allow a singular set of Hausdorff codimension three (We only need Hausdorff codimension four for our application). Tao and Tian [10] prove this with much weaker conditions but their proof is much more elaborate.
Since the point of this paper is to simplify their proof, we include it.

\section{Monotonicity Formulae}\label{S:monotonicityformulae}

In this section we use the condition that $ \mathcal{A}(D_{A}, \Phi)$ 
is stationary in the sense of Price [4], and Zhang [9]  at $(D_{A},\Phi)$ with respect to smooth diffeomorphisms, that are the identity near the domain boundary, to derive 
a monotonicity formula. We give an outline, since the method is the same as used by many authors, cf. [4], and Lemma 2.1 of [11]. 

\begin{theorem}\label{T:monotonicitytheorem}
If $(D_{A},\Phi )$ is
 a stationary point of 
$ \mathcal{A}(D_{A}, \Phi)$ with respect to smooth diffeomorphisms of $\Omega$ which equal the identity in a neighborhood of $\partial \Omega $, then for all smooth vector fields $ \boldsymbol {\nu}$ with compact support in $ \Omega$, we have

\begin{equation}\label{E:nuidentity}
\begin{split}
\int_{\Omega} 
[
\partial_{k} \boldsymbol{ \nu}^{k} \vert F_{A} \vert^{2} + \vert D_{A} \Phi \vert^{2} + Q(\Phi)- \partial_{j} \boldsymbol{ \nu}^{k} F_{ki} F_{ji} \\
-2 \partial_{j} \boldsymbol {\nu}^{k} (D_{A,j}\Phi) (D_{A,k}\Phi) \, ] (dx)^{n}=0
\end{split}.
\end{equation}
\end{theorem}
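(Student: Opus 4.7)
The plan is to embed the vector field $\nu$ in a compactly supported one-parameter family of diffeomorphisms $\varphi_t : \Omega \to \Omega$ with $\varphi_0 = \mathrm{id}$ and $\partial_t\varphi_t|_{t=0} = \nu$, and then differentiate $\mathcal{A}(\varphi_t^* D_A, \varphi_t^* \Phi)$ at $t=0$. By hypothesis this derivative vanishes. Because $\varphi_t$ is the identity near $\partial\Omega$, the change of variables $y = \varphi_t(x)$ converts the pullback-of-fields formulation into the logically equivalent one in which the fields $(D_A, \Phi)$ are held fixed but the background Euclidean metric $\delta$ is replaced by the pulled-back metric $g_t := \varphi_t^*\delta$. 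Thus the desired identity will emerge from differentiating the action with respect to the metric and invoking stationarity.

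Concretely, I would record the expansions $g_{t,ij} = \delta_{ij} + t(\partial_i\nu^j + \partial_j\nu^i) + O(t^2)$, so that $\sqrt{\det g_t} = 1 + t\,\partial_k\nu^k + O(t^2)$ and $g_t^{ij} = \delta^{ij} - t(\partial_i\nu^j + \partial_j\nu^i) + O(t^2)$, and then differentiate each term of the Lagrangian density separately. The derivative of $\sqrt{\det g_t}$ acting on the whole integrand produces the overall trace contribution $\partial_k\nu^k(|F_A|^2 + |D_A\Phi|^2 + Q(\Phi))$. The two inverse-metric factors in $|F_A|^2 = g^{ij}g^{kl}F_{ik}F_{jl}$, combined with the antisymmetry of $F$ and the symmetry of $\partial_i\nu^j + \partial_j\nu^i$, collapse to a single term proportional to $-\partial_j\nu^k F_{ki}F_{ji}$. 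The single inverse-metric factor in $|D_A\Phi|^2 = g^{ij}(D_{A,i}\Phi)(D_{A,j}\Phi)$ yields the term $-2\partial_j\nu^k(D_{A,j}\Phi)(D_{A,k}\Phi)$. Finally, because $\Phi$ is a section rather than a differential form, $Q(\Phi)$ receives no metric-contraction contribution and enters only through the volume factor. Summing these pieces and equating to zero reproduces \eqref{E:nuidentity}.

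The main obstacle is bookkeeping: one must correctly identify which geometric objects transform as differential forms under $\varphi_t$ (namely $A$, $F_A$, and $D_A\Phi$) versus as sections (namely $\Phi$), since only the former generate the metric-contraction contributions that produce the $\partial_j\nu^k F_{ki}F_{ji}$ and $\partial_j\nu^k(D_{A,j}\Phi)(D_{A,k}\Phi)$ terms. A secondary, more conceptual point is justifying the passage from varying the fields to varying the metric. This reduces to the tensorial transformation laws for a unitary connection, its curvature, the covariant differential, and the volume form on the trivial bundle $V \times \Omega$, together with the observation that $\varphi_t$ equals the identity near $\partial\Omega$ so that no boundary contributions appear in the change of variables. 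Once these identifications are fixed, the rest of the argument is a direct algebraic computation in which antisymmetry of $F$ and symmetry of the Lie derivative of the metric are used to merge pairs of similar terms into the single expressions displayed in the theorem.
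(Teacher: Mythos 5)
Your proposal is correct and is essentially the paper's own argument: the paper simply invokes Noether's formula (citing the direct calculation of Price and of Zhang, Lemma 2.1) for exactly this inner-variation computation, which you carry out explicitly by transferring the diffeomorphism variation to the pulled-back metric and expanding $\sqrt{\det g_t}$ and $g_t^{ij}$ to first order, with $\Phi$ pulled back as a section and $F_A$, $D_A\Phi$ as forms. The only point in the paper's proof beyond this is the remark that the identity persists, by the same direct calculation, when $\vert F_A\vert^2$, $\vert D_A\Phi\vert^2$, $Q(\Phi)$ are merely $L^1$ and the singular points form a set of measure zero; your smooth computation does not address that extension, but the statement as given does not strictly demand it.
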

\begin{proof}
This formula can be straightforwardly derived for smooth solutions $F_{A}, \Phi $ via
Noether's formula. But, by direct calculation (compare formula (2.4) page 265 of [11]), it is also true for $\vert F_{A} \vert^{2},
\vert 
\Phi \vert^{2}
, Q(\Phi ) 
\in L^{1}(\Omega)  $
Thus, if the singular points of $\vert F_{A} \vert^{2},
\vert D_{A} 
\Phi \vert^{2}
, Q(\Phi ) 
\in L^{1}(\Omega)  $, occur on a set of measure zero, they do not affect formula
\eqref{E:nuidentity}
\end{proof}

\begin{corollary}\label{C:firstvariationalidentity}
If \eqref{E:nuidentity} holds, and if $\mathcal{A}(D_{A}, \Phi)$ is finite, then, for $ \{ \vert x-x_{0} \vert \leq R \} \subset \Omega  $, we have
\begin{equation}\label{E:firstvariationalidentity}
\begin{split}
\int_{\vert x-x_{0} \vert \leq R } 
[ (n-4) \vert F_{A} \vert^{2} +(n-2) \vert D_{A} \Phi \vert^{2} + nQ(\Phi) ]
\, (dx)^{n} - \\
R \int_{\vert x-x_{0} \vert = R } [\vert F_{A} \vert^{2} + \vert D_{A} \Phi \vert^{2} +Q(\Phi) -4 \vert F_{A,r} \vert^{2} -2 \vert D_{A,r} \Phi \vert^{2} ]
\, (dx)^{n-1} =0
\end{split}.
\end{equation}
Here, $ F_{A,r} = i_{\frac{\partial}{\partial r}} (F_{A}) $ is the radial part of $F_{A}$,
and $D_{A,r} =  D_{A} ( \frac{\partial}{\partial r} ) $ is the radial part of
$D_{A}$.
\end{corollary}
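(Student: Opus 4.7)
The plan is to specialize the vector-field identity \eqref{E:nuidentity} to a radial family approximating the characteristic function of the ball $\{|x-x_0|\leq R\}$, and then to extract a surface term on the sphere $\{|x-x_0|=R\}$ by a limiting argument. Concretely, for small $\epsilon>0$ I would choose a smooth nonincreasing cutoff $\chi_\epsilon:[0,\infty)\to[0,1]$ equal to $1$ on $[0,R-\epsilon]$ and vanishing on $[R,\infty)$, and then set $\boldsymbol{\nu}_\epsilon(x):=\chi_\epsilon(|x-x_0|)(x-x_0)$. This has compact support in $\Omega$ and, writing $r:=|x-x_0|$, satisfies
\begin{equation*}
\partial_k \boldsymbol{\nu}_\epsilon^k = n\,\chi_\epsilon(r) + r\,\chi_\epsilon'(r),\qquad \partial_j \boldsymbol{\nu}_\epsilon^k = \chi_\epsilon(r)\,\delta_{jk} + \chi_\epsilon'(r)\,\frac{(x-x_0)^j(x-x_0)^k}{r}.
\end{equation*}

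Next I would substitute into \eqref{E:nuidentity} and split the integrand into its $\chi_\epsilon$-part (from the $\delta_{jk}$ piece) and its $\chi_\epsilon'$-part (from the $\tfrac{(x-x_0)^j(x-x_0)^k}{r}$ piece). Using $\delta_{jk}F_{ki}F_{ji}=|F_A|^2$ and $\delta_{jk}(D_{A,j}\Phi)(D_{A,k}\Phi)=|D_A\Phi|^2$, the $\chi_\epsilon$-part collects into
\begin{equation*}
\chi_\epsilon(r)\bigl[(n-4)|F_A|^2 + (n-2)|D_A\Phi|^2 + n\,Q(\Phi)\bigr].
\end{equation*}
For the $\chi_\epsilon'$-part, the key identity $\frac{(x-x_0)^j(x-x_0)^k}{r^2}F_{ki}F_{ji}=|F_{A,r}|^2$, together with its Higgs analog for $D_A\Phi$, yields the radial density
\begin{equation*}
r\,\chi_\epsilon'(r)\bigl[|F_A|^2+|D_A\Phi|^2+Q(\Phi)-4|F_{A,r}|^2-2|D_{A,r}\Phi|^2\bigr].
\end{equation*}

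Finally I would let $\epsilon\to 0$. Since $\mathcal{A}(D_A,\Phi)$ is finite, dominated convergence handles the bulk integral, reproducing the volume integral of \eqref{E:firstvariationalidentity}. For the radial piece, the measure $-r\chi_\epsilon'(r)\,dr$ converges to $R$ times the surface measure on the sphere of radius $R$, so the contribution tends to the boundary integral $-R\int_{|x-x_0|=R}[\,\cdots\,]\,(dx)^{n-1}$, which is exactly the second line of \eqref{E:firstvariationalidentity}.

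The main obstacle is justifying the surface-integral limit: for a general finite-energy stationary $(D_A,\Phi)$ the integrand is only defined as a trace, and the radial components $|F_{A,r}|^2$ and $|D_{A,r}\Phi|^2$ need not be well-controlled for every $R$. The standard remedy, due essentially to Price \textup{[4]} and used in Lemma 2.1 of Zhang \textup{[9]}, is to invoke the coarea formula: it guarantees the integrand lies in $L^1$ of the sphere $\{|x-x_0|=\rho\}$ for almost every $\rho$, so \eqref{E:firstvariationalidentity} holds on a full-measure set of radii, after which both sides are absolutely continuous in $R$ and the identity extends to every $R$ with $\{|x-x_0|\leq R\}\subset\Omega$.
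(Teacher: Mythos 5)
Your proposal is essentially the paper's own proof: the authors also insert the radial vector field $\boldsymbol{\nu}_\epsilon = \eta\bigl(\tfrac{R-|x|}{\epsilon}\bigr)\, r\tfrac{\partial}{\partial r}$ (which is exactly your $\chi_\epsilon(|x-x_0|)(x-x_0)$ up to notation) into \eqref{E:nuidentity}, compute $\partial_k\boldsymbol{\nu}^i$ exactly as you do, and let $\epsilon\downarrow 0$ so that $\tfrac{1}{\epsilon}\eta'$ concentrates to a delta on the sphere, producing the boundary term. Your closing remark about a.e.\ radii and absolute continuity is extra care the paper does not spell out; the only small blemish is your intermediate normalization $\delta_{jk}F_{ki}F_{ji}=|F_A|^2$, which is not consistent with the coefficients $(n-4)$ and $4|F_{A,r}|^2$ you (correctly) end up with -- the standard contraction conventions behind Price's formula are needed there, as in the paper.
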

\begin{proof}
With no loss of generality we assume that $x_{0} =0$, since our calculations are translation invariant.
Let $\boldsymbol{ \nu } = \boldsymbol{\nu}_{\epsilon} $ be a smooth vector field, dependent on a small positive parameter $\epsilon$, where $\boldsymbol{\nu}$ is defined by
\begin{equation}\label{E:definitionofnu}
\begin{cases}
\boldsymbol{\nu} = r \frac{\partial}{\partial r}  & \text{if} \quad \vert x \vert  \leq R- \epsilon \\
\boldsymbol{\nu} = 0 & \text{if} \quad \vert x \vert \geq R \\
\boldsymbol{\nu} = \eta (\frac{R- \vert x \vert}{\epsilon} ) r \frac{\partial}{\partial r} & \text{otherwise,} 
\end{cases}
\end{equation}
Here $ \eta $
is a smooth function, satisfying 
\begin{equation}\label{E:definitionofphi}
\begin{cases}
\eta(t) = 0 & \quad \text{for} \quad t \leq 0 \\
\eta(t)= 1 & \quad \text{for} \quad t \geq 1
\end{cases}
\end{equation}
with  $\phi^{\prime} \geq 0$.
Note that:
\begin{equation}\label{E:dnuidentity}
\frac{\partial}{\partial x^{k}} \boldsymbol{\nu}^{i} = \eta (\frac{R- \vert x \vert}{\epsilon} ) \delta^{i}_{k} 
- (\frac{1}{\epsilon} ) \eta^{\prime} (\frac{R- \vert x \vert}{\epsilon} ) 
\frac{x^{i} x^{k}}{\vert x \vert}. 
\end{equation}

Using this $\boldsymbol{\nu} $ in \eqref{E:nuidentity}, we obtain
\begin{align}\label{E:firstpremonotonicityidentity}
& \int_{\vert x \vert \leq R}
  \eta (\frac{R- \vert x \vert}{\epsilon} )
[
(n-4) \vert F_{A} \vert^{2} + (n-2) \vert D_{A} \Phi \vert^{2} + nQ(\Phi) 
] \, (dx)^{n} -  \\ \notag
&\int_{R-\epsilon \leq \vert x \vert = \rho \leq R } 
(\frac{1}{\epsilon} )
\eta^{\prime} 
(\frac{R- \vert x \vert}{\epsilon} ) [ \vert F_{A} \vert^{2}+ 
\vert D_{A} \Phi \vert^{2} +Q(\Phi) - 4 \vert F_{A} \vert^{2} - 2 \vert D_{A,r} 
\Phi \vert^{2}  ] 
\rho  \, 
(da)_{n-1} 
d \rho.
\end{align}
 where $ (da)_{n-1}= \rho^{n-1} (d \Theta)$ is the area element on $ \vert x \vert = \rho$.

Let $t = R- \rho$, and note that $ lim_{\epsilon \downarrow 0 } (\frac{1}{\epsilon} ) ( \eta^{\prime}( \frac{t}{\epsilon}) )\rightarrow \delta $, in the sense of distributions, where $ \delta$ is the delta distribution.
Thus, letting $\epsilon \downarrow 0 $ gives our result.
\end{proof}
From Corollary \ref{C:firstvariationalidentity}, by ignoring the radial parts
of $F_{A}$, and $D_{A}$ which appear with a negative sign, and by replacing both
$n$ and $n-2$ by $n-4$, we derive a differential inequality on
\[
\mathcal{E}(R) = \int_{\vert x \vert \leq R}
\vert F_{A} \vert^{2} + \vert D_{A} \Phi \vert^{2} + Q(\Phi) \, (dx)^{n}.
\] 
\begin{equation}\label{E:differentialinequalityforenergy}
(n-4) \mathcal{E}(R) \leq  \mathcal{E}^{\prime}(R).
\end{equation}

Integrating \eqref{E:differentialinequalityforenergy} gives the monotonicity formula
\begin{equation}\label{E:firstmonotonicityformula}
\mathcal{E}(r) \leq (\frac{r}{R})^{n-4} \mathcal{E}(R).
\end{equation}
\begin{theorem}\label{T:monotonicitytheorem}
If $  (D_{A}, \Phi)$ is a stationary point of the functional
$ \mathcal{A} (D_{A}, \Phi)$, with respect to smooth diffeomorphisms of its domain, then, if $ \{\vert x-x_{0} \vert \leq R \} \subset \Omega$  for
$r \leq R$,  we have
\begin{equation}\label{E:secondmonotonicityformula}
	\int_{ \vert x- x_{0} \vert \leq r}
	\vert F_{A} \vert^{2} + \vert D_{A} \Phi \vert^{2} + Q(\Phi) \, (dx)^{n}
	\leq
	\left(\frac{r}{R}\right)^{n-4}
	\int_{ \vert x- x_{0} \vert \leq R}
	\vert F_{A} \vert^{2} + \vert D_{A} \Phi \vert^{2} + Q(\Phi) \, (dx)^{n}
	\end{equation}.	
	\end{theorem}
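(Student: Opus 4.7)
The plan is to integrate a first-order differential inequality for the scaled energy, using Corollary \ref{C:firstvariationalidentity} as the sole essential input. I would first define
\[
\mathcal{E}(R) = \int_{\vert x - x_0 \vert \leq R} \left(\vert F_A \vert^2 + \vert D_A \Phi \vert^2 + Q(\Phi)\right) (dx)^n,
\]
and note that by the coarea formula $\mathcal{E}'(R)$ equals the spherical integral of the same integrand for a.e.\ $R$; since $\mathcal{E}$ is monotone nondecreasing it is absolutely continuous, which is all that the integration step below requires.

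Next I would estimate the two pieces of \eqref{E:firstvariationalidentity}. On the interior side, since $n \geq 4$ the coefficients $n-4$, $n-2$, $n$ are all at least $n-4$, and each of $\vert F_A\vert^2$, $\vert D_A\Phi\vert^2$, $Q(\Phi)$ is nonnegative by \eqref{E:firstconditiononQ}; thus the bulk integral is bounded below by $(n-4)\,\mathcal{E}(R)$. On the boundary side, the subtracted terms $-4\vert F_{A,r}\vert^2 - 2\vert D_{A,r}\Phi\vert^2$ are nonpositive, so discarding them bounds the spherical integral above by $R\,\mathcal{E}'(R)$. Combined, these give
\[
(n-4)\,\mathcal{E}(R) \leq R\,\mathcal{E}'(R),
\]
equivalently $\frac{d}{dR}\log \mathcal{E}(R) \geq (n-4)/R$ on the open set where $\mathcal{E}(R) > 0$.

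Finally I would integrate this inequality from $r$ to $R$ to obtain $\log\bigl(\mathcal{E}(R)/\mathcal{E}(r)\bigr) \geq (n-4)\log(R/r)$, which rearranges to the claimed \eqref{E:secondmonotonicityformula}; on any subinterval where $\mathcal{E}(r)=0$ the bound is automatic by monotonicity, so the zero set causes no issue. I expect no serious obstacle here: the real technical work, namely the distributional limit $\epsilon \downarrow 0$ in the cutoff vector field defining $\boldsymbol{\nu}$, has already been carried out in Corollary \ref{C:firstvariationalidentity}, so the monotonicity formula reduces to the Gr\"onwall-style rearrangement sketched above.
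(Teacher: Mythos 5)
Your argument is essentially the paper's own: it drops the nonpositive radial boundary terms in \eqref{E:firstvariationalidentity}, lowers the bulk coefficients $n$, $n-2$ to $n-4$ using $Q(\Phi)\geq 0$, obtains the differential inequality $(n-4)\,\mathcal{E}(R)\leq R\,\mathcal{E}'(R)$, and integrates it to get \eqref{E:secondmonotonicityformula}. In fact you state the intermediate inequality more carefully than the paper's display \eqref{E:differentialinequalityforenergy}, which omits the factor $R$ (evidently a typo, since the integrated conclusion \eqref{E:firstmonotonicityformula} is the power law you derive), and your remarks on absolute continuity and the zero set of $\mathcal{E}$ are harmless additions.
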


\begin{remark}
In the case where we have a Riemannian metric, instead of a Euclidean metric,
this formula is easily seen to be valid with an error term.
\end{remark}

In order to prove our final regularity theorem
Theorem \ref{T:finalregularitytheorem}   
(which is a Corollary of 
Lemma \ref{L:X2normboundsonstablesoltions} and 
of Corollary \ref{C:scaledandtranslatedgaugeequivalentregularitytheorem}, [ all in Section 
\ref{S:application to Yang Mills Higgs}],
 we need the scaling estimates below:
 
 We take a small ball $ \vert x - x_{0} \vert \leq r$ to a ball
$\vert y \vert \leq 4$. In this case, as in section \ref{S:ScalingandmaxPHI},
we may assume that we have a bound on $\Phi$, and therefore on $Q(\Phi)$, and its derivatives with respect to $\Phi$.
That is:
\begin{align}\label{E:Qbounds}
& \vert \Phi  \vert \leq	h \\
&Q(\Phi) \leq h_{0} = max_{\vert \Phi \vert \leq h} Q(\Phi) \\
&\vert Q_{\Phi}(\Phi) \vert  
\leq h_{1} = max_{\vert Q_{\Phi}(\Phi) \vert \leq h} \vert Q_{\Phi}({\Phi} \vert.
\end{align}
Under rescaling, from $r \rightarrow R$, denoting the rescaled terms by
$\tilde{\Phi}$, $\tilde{Q}$ ect., 
We obtain
\begin{align}\label{E:scalingtildeidentities}
&\vert  \tilde{\Phi} \vert = \left( \frac{r}{R} \right)\vert \Phi \vert \leq 
\left( \frac{r}{R} \right) h \\ \notag
&\tilde{Q} = \left( \frac{r}{R} \right)^{4} Q \leq  h_{0} \\ \notag
& \vert\tilde{Q}_{\tilde{\Phi}} \vert = \left( \frac{r}{R} \right)^{3} \vert Q_{\Phi} \vert \leq  \left( \frac{r}{R} \right)^{3} h_{1}.
	\end{align}
If $\Phi$ is more regular, then analogous bounds hold for the higher derivatives of $\tilde{Q}$	with respect to $\Phi$. This becomes important for the regularity theory.

 We have a rescaled monotonicity type estimate.
\begin{theorem}\label{T:rescaledmonotonicitytypeestimate}
If
\begin{equation}\label{E:monotonicitytypeestimate}
\int_{\vert x-x_{0} \vert \leq r}
 [\vert F_{ A} \vert^{2} + \vert D_{A}\Phi \vert ^{2} + Q(\Phi)] \, (dx)^{n}
\leq C (r)^{n-4}
\end{equation}
is rescaled to $y = (\frac{R}{r} ) (x- x_{0} )$, we have 
\begin{equation}\label{E:rescaledmonotonicitytypeestimate}
\int_{\vert x-x_{0} \vert \leq R}
[\vert F_{ \tilde{A }} \vert^{2} + \vert D_{\tilde{A} } \tilde{\Phi} \vert ^{2} + \tilde{Q} (\tilde{\Phi} )] \, (dy)^{n}
\leq C (R)^{(n-4)}.
\end{equation}
Moreover, the rescaled variables satisfy \eqref{E:scalingtildeidentities}.
\end{theorem}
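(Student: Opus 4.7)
The statement is a direct scaling calculation based on the transformation rules already isolated in Section~\ref{S:ScalingandmaxPHI}, so the proof is bookkeeping rather than substantive analysis.

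First I would record how each of the three terms in the integrand transforms under the blow-up $y = (R/r)(x-x_0)$. From the identity $A\,dx = \tilde A\,dy$ one reads off $\tilde A = (r/R)A$ (as coefficient functions) and hence $\tilde F_{ij}(y) = (r/R)^2 F_{ij}(x)$; squaring gives $|F_{\tilde A}|^2(y) = (r/R)^4 |F_A|^2(x)$. Since $\Phi$ is prescribed to scale like a $1$-form, $\tilde\Phi(y) = (r/R)\Phi(x)$, and the same kind of computation applied to $D_{\tilde A}\tilde\Phi = (d_y + \tilde A)\tilde\Phi$ yields $|D_{\tilde A}\tilde\Phi|^2(y) = (r/R)^4 |D_A\Phi|^2(x)$. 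Consistency of the Lagrangian density then forces $\tilde Q(\tilde\Phi)(y) = (r/R)^4 Q(\Phi)(x)$, which is exactly the scaling rule already singled out in \eqref{E:Qscaling}. Multiplying these three factors by the Jacobian $(dy)^n = (R/r)^n (dx)^n$ produces
$$\int_{|y|\le R}\bigl[|F_{\tilde A}|^2 + |D_{\tilde A}\tilde\Phi|^2 + \tilde Q(\tilde\Phi)\bigr](dy)^n = \Bigl(\tfrac{R}{r}\Bigr)^{n-4}\int_{|x-x_0|\le r}\bigl[|F_A|^2 + |D_A\Phi|^2 + Q(\Phi)\bigr](dx)^n,$$
and inserting the hypothesis \eqref{E:monotonicitytypeestimate} delivers the rescaled inequality \eqref{E:rescaledmonotonicitytypeestimate}.

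The auxiliary identities \eqref{E:scalingtildeidentities} come out of the same book. Writing the rescaled potential as $\tilde Q(z) := (r/R)^4 Q((R/r)z)$ and differentiating in $z$, the chain rule immediately gives $\tilde Q_{\tilde\Phi}(\tilde\Phi) = (r/R)^3 Q_\Phi(\Phi)$. Feeding the pointwise bounds $|\Phi|\le h$, $Q(\Phi)\le h_0$, $|Q_\Phi(\Phi)|\le h_1$ of \eqref{E:Qbounds} into the three scalings yields the three estimates asserted (using $r\le R$ to absorb the extra factor in the $\tilde Q$ line).

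The only subtlety worth flagging is the potential term: $Q(\Phi)$ is not a tensorial object with a canonical scaling law, so one must \emph{define} $\tilde Q$ by the rule that makes the action invariant (equivalently, makes its integrand transform as a density of weight $4$). Once that definitional point is accepted, there is no real obstacle; every remaining step is a single application of the chain rule and a change of variables.
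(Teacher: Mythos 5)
Your proposal is correct and is essentially the paper's own argument: the paper gives no separate proof of this theorem, presenting it as an immediate consequence of the scaling rules \eqref{E:scalingtildeidentities} together with the change of variables, which is exactly the bookkeeping you carry out. Two small points of care, both concerning the paper's printed formulas rather than your method. First, the rule you use, $\tilde Q(\tilde\Phi)(y)=(r/R)^{4}Q(\Phi)(x)$, is the one recorded in \eqref{E:scalingtildeidentities}; equation \eqref{E:Qscaling} as printed carries the reciprocal factor $(R/r)^{4}$ and is inconsistent with \eqref{E:scalingtildeidentities}, so you should cite the latter (or note the typo) rather than claim your rule ``is exactly'' \eqref{E:Qscaling}. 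Second, and more importantly, your change of variables gives $\int_{|y|\le R}\tilde e\,(dy)^{n}=(R/r)^{n-4}\int_{|x-x_0|\le r}e\,(dx)^{n}\le C(R/r)^{n-4}r^{n-4}=CR^{\,n-4}$, whereas the conclusion \eqref{E:rescaledmonotonicitytypeestimate} as printed reads $CR^{-(n-4)}$; these agree only when $n=4$. You assert that inserting \eqref{E:monotonicitytypeestimate} ``delivers'' the printed inequality without flagging this mismatch. The bound $CR^{\,n-4}$ is the scale-invariant form one expects (the rescaled configuration satisfies the same monotonicity-type estimate with the same constant), so the exponent in the paper is almost certainly a sign typo, but a complete write-up should state the bound your computation actually produces and note the discrepancy explicitly rather than silently identifying the two.
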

\begin{remark}
Under blowup, since we assume a bound on $\Phi$, not only do $F_{ \tilde{A},r} \rightarrow 0$ and $D_{r, \tilde{A} } \Phi \rightarrow 0 $, but
$\tilde{\Phi}$ and $\tilde{Q} \rightarrow 0$. Thus, the theory of blow-ups is the same as for pure Yang-Mills. This is somewhat disappointing.
\end{remark}
There is a direct application of the monotonicity theorem \ref{T:monotonicitytheorem} in the proof of
 Lemma \ref{L:X2normboundsonstablesoltions} and thus in the proof of
Theorem \ref{T:finalregularitytheorem}, which are in
 of Section \ref{S:application to Yang Mills Higgs}.

\section{Improved Kato Inequalities}

Let $\nabla_{A} = \{ \nabla_{i, A} \}$ (where the $i$ refers to local co-ordinates $x^{i}$ on the base ) denote a local covariant derivative in a bundle (where this notation is used to make a clear distinction between the full covariant derivative on the bundle and the exterior covariant differential), and $\nu$ is a $C^{1} $ section, with $\nabla \nu $ continuous, the pointwise inequality
\begin{equation}\label{E:pointwisekatoinequality}
\vert d \vert \nu \vert \vert^{2} \leq \beta \vert \ \nabla \nu \vert^{2} 
\text{, for} \quad \beta =1
\end{equation}
is well known. However, if $\nu$ satisfies some elliptic equations, often the constant
$\beta$ can be improved. This is particularly useful in removing singularities.

Such "Refined Kato inequalities have been used 
before. In particular see (23') page 219 [6] and the estimate of the second fundamental form in [9].

In the following, we will choose a specific orthonormal frame at $x \in \Omega \subset \mathcal{R}^{n}$. We write the inequalities in such a way as to make the extension to a Riemannian manifold as a base space clear. 

 Our two examples are $\nu =F_{A} $, and $ \nu = D_{A} \Phi$. Here, $ F_{A}$, and $\Phi$ are assumed smooth in the domain $\Omega \subset \mathcal{R}^{n}$.
 In fact, we prove a generalization of the usual improved Kato inequalities, for arbitrary one and two forms, with error terms.
 Because a general two form $\boldsymbol{F}$ (unlike the curvature $ F_{A}$ does not satisfy either the Bianchi identity or the first Field equation, we expect the error terms to involve $D_{A}\boldsymbol{F} $ and $D^{*}_{A}\boldsymbol{F} $). Similarly, because a general one form $\boldsymbol{\theta}$ does not satisfy the second Field equation and 
 is not in the kernel of $D^{*}_{A}$, we expect the error terms to involve
$D_{A}\boldsymbol{\theta} $, and $ D^{*}_{A}\boldsymbol{\theta} $).

 We note that the constants are different for the one form and the two form
 inequalities.
 
 In this section we make use of the connection $\nabla$, locally associated with a 1-form $A$, in the usual way, and denoted by
 $\nabla^{A}$. We also make use of the associated local covariant derivative
 $\nabla_{A}$, and the associated local covariant exterior differential denoted
 by $D_{A}$. For a less cluttered notation, surpress the subscript $A$
 on the connection and the local covariant derivative, which will cause no 
 confusion because we are working in a local trivialization, so that the connection is the local covariant derivative.

 \begin{theorem}\label{T:pointwiseimprovedkatoinequalities}
 Let $\nabla$ be an arbitrary metric compatible connection on
 $ \Omega \times V$, where $ \Omega \subset \mathcal{R}^{n}$. If 
 $\boldsymbol{F} \colon \Omega \rightarrow
  V \otimes T^{*}(\Omega) \otimes T^{*}( \Omega )$ is an arbitrary smooth vector valued 2-form
 then,
 \begin{equation}\label{E:pointwiseimprovedkatoinequalitytwoformcase}
 \left( \frac{n}{n-1 }\right)
 \left  \vert d  \vert \boldsymbol{F}  \vert \vert^{2} \right) \leq
 \vert \nabla \boldsymbol{F} \vert^{2} + \vert D_{A} \boldsymbol{F} \vert^{2}
 + \vert D^{*}_{A} \boldsymbol{F} \vert^{2},
\end{equation}
and if $\boldsymbol{\theta} \colon  \Omega \rightarrow V \otimes T^{*} ( \Omega )$ is an arbitrary smooth vector valued one form,
\begin{equation}\label{E:pointwiseimprovedkatoinequalityoneformcase}
\left( \frac{n+1}{n }\right)
 \left  \vert d  \vert \mathcal{\boldsymbol{\theta}}  \vert \vert^{2} \right) \leq
 \vert \nabla \boldsymbol{\theta} \vert^{2} + \vert D_{A} \boldsymbol{\theta} \vert^{2}
 + \vert D^{*}_{A} \boldsymbol{\theta} \vert^{2}.
\end{equation}
\end{theorem}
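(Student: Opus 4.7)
The plan is to reduce both inequalities to pointwise linear-algebra statements and extract the sharp constants from an $O(n)$-irreducible decomposition of $\nabla \boldsymbol{F}$ (resp.\ $\nabla \boldsymbol{\theta}$).

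Since the inequalities are pointwise, fix $x \in \Omega$. It suffices to work at any $x$ where $|\boldsymbol{F}|(x) > 0$ (resp.\ $|\boldsymbol{\theta}|(x) > 0$); the zero set is handled by continuity of $\nabla \boldsymbol{F}$ and a standard approximation argument. At such an $x$, pick an orthonormal frame $\{e_k\}$ with $e_1$ aligned with $d|\boldsymbol{F}|(x)$ (resp.\ with $\boldsymbol{\theta}(x)$). Writing $T_{kij} := (\nabla_k \boldsymbol{F})_{ij}$, the identity $|\boldsymbol{F}|\,\partial_k|\boldsymbol{F}| = \tfrac{1}{2}\boldsymbol{F}_{ij} T_{kij}$ together with this frame choice gives the constraints $\boldsymbol{F}_{ij} T_{kij} = 0$ for $k \geq 2$, reducing the whole question to a constrained finite-dimensional inequality on $T$.

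The next step is the orthogonal $O(n)$-decomposition
\[
T_{kij} = T^{(a)}_{kij} + T^{(c)}_{kij} + T^{(r)}_{kij},
\]
with $T^{(a)}$ totally antisymmetric (encoding $D_A \boldsymbol{F}$), $T^{(c)}_{kij} = \tfrac{1}{n-1}(\delta_{ki}\, c_j - \delta_{kj}\, c_i)$ where $c_j := \sum_k T_{kkj}$ is proportional to $(D_A^* \boldsymbol{F})_j$, and $T^{(r)}$ the residual Cartan piece (trace-free with no totally antisymmetric component). Orthogonality gives $|\nabla \boldsymbol{F}|^2 = |T^{(a)}|^2 + |T^{(c)}|^2 + |T^{(r)}|^2$, and direct counting identifies $|D_A \boldsymbol{F}|^2$ and $|D_A^* \boldsymbol{F}|^2$ as positive constant multiples of $|T^{(a)}|^2$ and $|T^{(c)}|^2$ respectively. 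Substituting the decomposition into
\[
(\partial_1|\boldsymbol{F}|)^2 = \frac{1}{4|\boldsymbol{F}|^2}\bigl(\boldsymbol{F}_{ij}\, T_{1ij}\bigr)^2
\]
and applying Cauchy--Schwarz, while using the $n-1$ vanishing constraints to eliminate one effective degree of freedom in the $T^{(r)}$ contribution, yields the sharp factor $\frac{n}{n-1}$ in front of $|d|\boldsymbol{F}||^2$. The 1-form case proceeds in parallel: decompose $T_{ki} := (\nabla_k \theta)_i$ into symmetric-traceless, antisymmetric ($\propto D_A \theta$), and trace ($\propto D_A^* \theta$) pieces, align the frame so $\boldsymbol{\theta}(x) \parallel e_1$, and the analogous counting delivers $\frac{n+1}{n}$.

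The main obstacle is not conceptual but combinatorial bookkeeping: one must chase normalization constants carefully so that $|D_A \boldsymbol{F}|^2$ and $|D_A^* \boldsymbol{F}|^2$ appear with the correct coefficients on the right-hand side, and identify the Cauchy--Schwarz extremizer---an element of $T^{(r)}$ (resp.\ of the symmetric-traceless piece) aligned with $\boldsymbol{F}$ (resp.\ $\boldsymbol{\theta}$) in the distinguished direction $e_1$---that realizes the sharp constant. Once the inner-product conventions are pinned down, the remaining argument is a routine finite-dimensional constrained optimization that isolates the announced factors $\frac{n}{n-1}$ and $\frac{n+1}{n}$.
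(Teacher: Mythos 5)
Your overall framework (work pointwise in a frame adapted to $d\vert\boldsymbol{F}\vert$, decompose $\nabla\boldsymbol{F}$ into its $O(n)$-irreducible pieces, identify the totally antisymmetric and trace parts with $D_{A}\boldsymbol{F}$ and $D^{*}_{A}\boldsymbol{F}$, and then run a constrained optimization) is a legitimate and genuinely different route from the paper, which instead substitutes the components $\partial_{1}\boldsymbol{F}_{ij}$ and $\partial_{1}\boldsymbol{F}_{1l}$ by the corresponding $(d\boldsymbol{F})$ and $(d^{*}\boldsymbol{F})$ components and uses only the elementary bound $\bigl(\sum_{i=1}^{m}a_{i}\bigr)^{2}\leq m\sum_{i=1}^{m}a_{i}^{2}$ with $m\in\{3,n-1\}$. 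However, as written your proposal has a genuine gap: the entire quantitative content of the theorem --- that the constrained optimization produces exactly the factors $\frac{n}{n-1}$ and $\frac{n+1}{n}$ with the error terms $\vert D_{A}\cdot\vert^{2}$, $\vert D^{*}_{A}\cdot\vert^{2}$ entering with coefficient at most $1$ --- is asserted, not derived, and the mechanism you indicate cannot be the right one. The $n-1$ constraints $\boldsymbol{F}_{ij}T_{kij}=0$, $k\geq 2$, only restrict the slices $T_{k\cdot\cdot}$ with $k\geq 2$ and say nothing about the $e_{1}$-slice where $d\vert\boldsymbol{F}\vert$ lives; for example $T=e_{1}\otimes\boldsymbol{F}$ satisfies all of them and saturates the classical Kato inequality, so "eliminating one effective degree of freedom in $T^{(r)}$" produces no improvement. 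For that $T$ the stated inequality survives only because it forces nonzero $\vert D_{A}\boldsymbol{F}\vert^{2}+\vert D^{*}_{A}\boldsymbol{F}\vert^{2}$, and the real work --- quantifying, within your decomposition and with your normalization factors (e.g.\ $\vert T^{(a)}\vert^{2}=\tfrac{2}{3}\vert d\boldsymbol{F}\vert^{2}$, $\vert T^{(c)}\vert^{2}=\tfrac{2}{n-1}\vert d^{*}\boldsymbol{F}\vert^{2}$ in the natural conventions), how much norm in the other slices plus error terms any $\boldsymbol{F}$-aligned component of the $e_{1}$-slice must generate --- is precisely what is missing and is not "routine bookkeeping."

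A further warning sign is the word "sharp": the constants in the statement are not the sharp refined-Kato constants (for a closed and co-closed $2$-form the sharp factor in this normalization is $\frac{n-1}{n-2}$ for $n\geq 5$, which beats $\frac{n}{n-1}$), so a correctly executed irreducible-decomposition computation will not "yield" $\frac{n}{n-1}$ as its natural output; you would instead have to check separately that your sharp inequality, with its own error-term coefficients, implies the weaker stated one. If you want a complete argument with minimal machinery, the paper's route is shorter: in the exponential gauge and adapted frame, trade each $\partial_{1}\boldsymbol{F}_{ij}$ ($i,j\neq1$, three terms) and each $\partial_{1}\boldsymbol{F}_{1l}$ ($n-1$ terms) for derivatives in directions $k\geq2$ plus one $d\boldsymbol{F}$ or $d^{*}\boldsymbol{F}$ component, square, sum, and add $(n-1)\vert\nabla_{1}\boldsymbol{F}\vert^{2}$ to both sides; your decomposition approach can certainly be completed, but the completion is the proof, and it is not in the proposal.
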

\begin{proof}
First, we prove \eqref{E:pointwiseimprovedkatoinequalitytwoformcase}.
At any arbitrary point $p$ in the fiber $V$, we choose a local exponential
gauge, so that $A(p) =0$.
Choose an adapted orthonormal frame (inducing local adapted orthonormal coordinates), such that $d \vert \boldsymbol{F} \vert =
d_{1} \vert \boldsymbol{F} \vert = 
\frac{\partial}{\partial_{1}}(\vert \boldsymbol{F} \vert) d x^{1} $.
Note, that choosing such an orthonormal frame, still preserves the exponential gauge centered at $p$, because all we are doing is rotating the base coordinates, by a constant rotation at $p$.
Then using the standard Kato inequality, we have:
\begin{equation}\label{E:standardkatofortwoform}
	d \vert \boldsymbol{F} \vert =
d_{1} \vert \boldsymbol{F} \vert  \leq \vert \nabla_{1} \boldsymbol{F}\vert.
\end{equation}
Note, that in our adapted orthonormal coordinates, at the arbitrary point $p$
in the fiber--that is the center of our exponential coordinates, we have
$\nabla_{1} \boldsymbol{F} $ has the coordinate representation
$ \sum_{k,l} \frac{\partial}{\partial{x^{1}} }
(F_{k,l})$, because $A(p) =0$.
The idea of the proof is to make use of this formula for the coordinates of 
$\nabla_{1} \boldsymbol{F}$, in an expression resulting from replacing terms
in the coordinate representation of $D_{A} \boldsymbol{F}$ at $p$ by terms in the 
coordinate representation of $D^{*}_{A} \boldsymbol{F}$ at $p$. Then, we use that fact that $p$ is arbitrary.
First, we express $D_{A}(\boldsymbol{F}) $ at $p$ in components, with respect to our adapted coordinates. We note that at $p$, we have $D_{A}(\boldsymbol{F})= d (\boldsymbol{F})$.

Consider the individual components of $D_{A}\boldsymbol{F}(p) =d F(p)$. We can compute these explicitly in our local coordinates.
In particular, consider those component terms of the form
$\left( \frac{\partial}{\partial x^{1}} \underset{i \neq j \neq 1}{\boldsymbol{F}_{ij}} \right)$. We have ( for $i,j $ fixed):
\begin{equation}\label{E:typeonetermstwoformcase}
\left( \frac{\partial}{\partial x^{1}} \underset{i \neq j \neq 1}{\boldsymbol{F}_{ij}} \right) = 
\frac{\partial F_{1,j}}{\partial x^{i}} \pm 
\frac{\partial F_{1,i}}{\partial x^{j}} 
%\sum_{k \neq 1} \left[ \pm
%\left( \frac{\partial}{\partial x^{k}} \underset{ l \neq 1,k}{\boldsymbol{F}_{1l}} \right) \right] 
\pm (d \boldsymbol{F} )_{1,i,j}.
\end{equation}
%, where in each term of the sum $k \neq 1$, and $l$ is either equal to the $i$ or $j$ on the left hand side of equation \eqref{E:typeonetermstwoformcase}.
Here, $ ( d \boldsymbol{F} )_{1,i,j}$ is the $dx^{1} \wedge dx^{i} \wedge dx^{j}$ component of $d \boldsymbol{F}$.

For each fixed pair $(i,j)$, on the left hand side of equation \eqref{E:typeonetermstwoformcase}, there are three terms on the right hand side of equation \eqref{E:typeonetermstwoformcase}.

Note that the $\pm$ parity of the terms on the right hand side of equation \eqref{E:typeonetermstwoformcase} is immaterial to our proof.

Consider the individual components of $D^{*}_{A} \boldsymbol{F}(p) = d^{*} \boldsymbol{F}(p)$.
In particular, those component terms of the form $\left( \frac{\partial}{\partial x^{1}} \underset{l  \neq 1}{\boldsymbol{F}_{1,l}} \right)$, are given
by:
\begin{equation}\label{E:typetwotermstwoformcase}
\left(
\frac{\partial}{\partial x^{1}} \underset{l \neq 1}{ \boldsymbol{F}_{1,l}}
\right)
=
\sum_{\substack{ s \neq 1 \\ s \neq l }}
%\sum_{ s \neq 1}
\left[ 
\pm 
\left(
\frac{\partial}{\partial x^{s}} \boldsymbol{F}_{s,l}
\right)
\right]
\pm
( d^{*} \boldsymbol{F} )_{l}
\end{equation}
Here, $( d^{*} \boldsymbol{F} )_{l}$ is the $dx^{l}$ component of $d^*{*} \boldsymbol{F}$, where $l$ is the fixed $l$ on the left hand side of equation \eqref{E:typetwotermstwoformcase}.

Note that for each fixed $l$ on the left hand side of equation \eqref{E:typetwotermstwoformcase}, there are $n-2$ terms in the sum on the right hand side of \eqref{E:typetwotermstwoformcase}. This is because $ s$
takes the $n-2$ integer values $ \{s = 2, \dots, n \} - \{l \}$.
Thus, the right hand side of equation \eqref{E:typetwotermstwoformcase} has
$n-1$ terms.

Note that the $\pm$ parity of the terms on the right hand side of equation
\eqref{E:typetwotermstwoformcase} is immaterial to our proof.
We have
\begin{equation}\label{E:normofnablasub1Fidentity}
\vert \nabla_{1} \boldsymbol{F}(p) \vert^{2} =
\sum_{s \neq t }  \left \vert
\frac{\partial}{\partial x^{1}} \boldsymbol{F}_{st} \right \vert^{2}.
\end{equation}
Replacing each term on the right hand side of equation \eqref{E:normofnablasub1Fidentity} by terms not involving
$\frac{\partial}{\partial x^{1}}$, by using either equation \eqref{E:typeonetermstwoformcase} or equation \eqref{E:typetwotermstwoformcase}, we obtain an expression for
$ \vert \nabla_{1} \boldsymbol{F} \vert^{2}$, in which $\frac{\partial}{\partial x^{1}}$
does not appear.

Each such replacement has either $3$ or $n-1$ terms. Note, that, since $n \geq 4$, we have $3 \leq n-1$.

  Now, we use $ \left( \sum_{i =1}^{n-1} a_{i} \right)^{2}
  \leq 	(n-1) \left( \sum_{i =1}^{n-1} a_{i}^{2} \right)$
  .

Thus, we have: 
\begin{equation}\label{E:firstestimatefornormofnablasub1ofF}
	\vert \nabla_{1} \boldsymbol{F}(p) \vert^{2}
	\leq 	(n-1) 
	\left(
	\sum_{i=2}^{n} \vert \nabla_{i} \boldsymbol{ F}(p) \vert^{2}	
	\right)
	+ (n-1)\vert D_{A}^{*} \boldsymbol{F} \vert^{2} + (n-1) \vert D_{A} \boldsymbol{F} \vert^{2}.
\end{equation}
Adding $(n-1)\vert \nabla_{1} F(p) \vert^{2} $ to both sides of equation
\eqref{E:firstestimatefornormofnablasub1ofF} we obtain
\begin{equation}\label{E:secondestimatefornormofnablasub1ofF}
(n)\vert \nabla_{1} \boldsymbol{F}(p) \vert^{2}
\leq 
(n-1) 
	\left(
	\sum_{i=1}^{n} \vert \nabla_{i} \boldsymbol{F}(p) \vert^{2}	
	\right)
	+ (n-1) \vert D_{A}^{*} \boldsymbol{F} \vert^{2} + 
	(n-1) \vert D_{A} \boldsymbol{F} \vert^{2}.
\end{equation}
Dividing both sides of inequality \eqref{E:secondestimatefornormofnablasub1ofF} by $n-1$, we obtain
\begin{equation}\label{E:secondestimatefornormofnablasub1ofF}
\left(
\frac{n}{n-1}
\right)
\vert \nabla_{1} \boldsymbol{F}(p) \vert^{2}
\leq 
	\sum_{i=1}^{n} \vert \nabla_{i} \boldsymbol{F}(p) \vert^{2}	
	+ 
	\vert D_{A}^{*} \boldsymbol{F} \vert^{2} + 
	 \vert D_{A} \boldsymbol{F} \vert^{2}.
\end{equation}
Now, we use inequality \eqref{E:standardkatofortwoform}, in combination with
inequality \eqref{E:secondestimatefornormofnablasub1ofF}, and the fact that $p$ is arbitrary, to obtain:
\begin{equation}\label{E:thirdestimatefornormofnablasub1ofF}
 \left(
	\frac{n}{n-1}
\right)
 \vert \nabla_{1}  \vert\boldsymbol{F}(p) \vert	 \vert^{2} \leq
 \left(
	\sum_{i=1}^{n} \vert \nabla_{i} \boldsymbol{F}(p) \vert^{2}	
	\right)
	+
	\vert D_{A}^{*} \boldsymbol{F} \vert^{2} + 
	 \vert D_{A} \boldsymbol{F} \vert^{2}.
	\end{equation}

However, it follows from  formulae
(2.4) page 193, formula (2.12 ) page 194, and formula (2.13) page 194 of [2], that the inequality 
 \eqref{E:thirdestimatefornormofnablasub1ofF} is gauge invariant.
Since $p$ is arbitrary, inequality holds in any gauge and at any point in our local trivialization.
Note that inequality \eqref{E:thirdestimatefornormofnablasub1ofF}	
is inequality \eqref{E:pointwiseimprovedkatoinequalitytwoformcase}, and this completes the proof of inequality \eqref{E:pointwiseimprovedkatoinequalitytwoformcase}.

Now, in a similar way, we prove inequality \eqref{E:pointwiseimprovedkatoinequalityoneformcase}.
First, we prove \eqref{E:pointwiseimprovedkatoinequalitytwoformcase}.
At any arbitrary point $p$ in the fiber $V$, we choose a local exponential
gauge, centered at $p$ so that $A(p) =0$.
Choose an adapted orthonormal frame (inducing local adapted orthonormal coordinates), such that $d \vert \boldsymbol{\theta}(p) \vert =
d_{1} \vert \boldsymbol{\theta}(p) \vert = 
\frac{\partial}{\partial_{1}}(\vert \boldsymbol{\theta(p)	} \vert) d x^{1} $.
Note, that choosing such an orthonormal frame, still preserves the exponential gauge centered at $p$, because all we are doing is rotating the base coordinates, by a constant rotation at $p$.

Then using the standard Kato inequality, we have:
\begin{equation}\label{E:standardkatoforoneform}
	d \vert \boldsymbol{\theta}(p) \vert =
d_{1} \vert \boldsymbol{\theta}(p) \vert  \leq \vert \nabla_{1} \boldsymbol{\theta}(p)\vert .
\end{equation}
Note that in our adapted orthonormal coordinates at the arbitrary point $p$
in the fiber (that is the center of our exponential coordinates), we have
$\nabla_{1} \boldsymbol{\theta}(p) $ has the coordinate representation
$ \sum_{k,l} \frac{\partial}{\partial{x^{1}} }
(F_{k,l})$ because $A(p) =0$.
The idea of the proof is to make use of this formula for the coordinates of 
$\nabla_{1} \boldsymbol{\theta}$, in an expression resulting from replacing terms
in the coordinate representation of $D_{A} \boldsymbol{\theta}(p)$ at $p$ by terms in the 
coordinate representation of $D^{*}_{A} \boldsymbol{\theta}(p)$ at $p$. Then, we use that fact that $p$ is arbitrary.
First we express $D_{A}(\boldsymbol{\theta}) $ at $p$ in components, with respect to our adapted coordinates. We note that at $p$ we have $D_{A}(\boldsymbol{\theta})= d (\boldsymbol{\theta})$.

Consider the individual components of $D_{A}\boldsymbol{\theta}(p) =d \boldsymbol{\theta}(p)$. We can compute these explicitly in our local coordinates.
In particular, we consider coefficent terms that are the coefficents of $dx^{1} \wedge dx^{l} $. These terms satisfy

\begin{equation}\label{E:typeonetermsoneformcase}
\left( \frac{\partial}{\partial x^{1}}
 {\boldsymbol{\theta}_{l \neq 1}}(p)
  \right) =
 \pm 
 %\sum_{m=2}^{n} 
 \frac{\partial\boldsymbol{\theta_{1}}}{\partial x^{l \neq 1}}(p)
 \pm (d \boldsymbol{\theta} )_{1,l} .
\end{equation}
Here, $( d \boldsymbol{\theta} )_{1,l}$ is the coefficent of $d \boldsymbol{\theta}$ corresponding to $dx^{1} \wedge d x^{l}$, where $l$
is the fixed $l$ on the left hand side of equation \eqref{E:typeonetermsoneformcase}.
There are two terms on the right hand side of equation \eqref{E:typeonetermsoneformcase}.
Note that, since $n \geq 4$, we have $2 \leq n$.

We also consider the individual components of $D^{*}_{A}(p) \boldsymbol{\theta}= d^{*} \boldsymbol{\theta}(p)$.
We have
\begin{equation}\label{E:typetwotermsoneformcase}
\left( \frac{\partial}{\partial x^{1}}
 {\boldsymbol{\theta}_{ 1}}(p)
  \right) =
  \pm \sum_{k=2}^{n} 
  \left( \frac{\partial}{\partial x^{k}}
 {\boldsymbol{\theta}_{ k}}(p)
 \right) + d^{*} (\boldsymbol{\theta}(p)).
\end{equation}

Note that $d^{*} (\boldsymbol{\theta}(p))$ is a zero form, and so it has no indices.
There are $n$ terms on the right hand side of equation \eqref{E:typetwotermsoneformcase}.

We have:
\begin{equation}\label{E:normofnablasuboneoftheta}
\vert \nabla_{1} \boldsymbol{\theta}(p) \vert^{2} = \vert d_{1} \boldsymbol{\theta}(p) \vert^{2}=
\left
\vert \left( \frac{\partial}{\partial x^{1}}
 {\boldsymbol{\theta}_{ 1}}(p)
 \right) \right  \vert^{2} 
 + \sum_{l=2}^{n}
 \left \vert \frac{\partial}{\partial x^{1}}
 \left \vert
 {\boldsymbol{\theta}_{l \neq 1}}(p)
  \right)
 \right \vert^{2} .
	\end{equation}
Replacing the first term on the right hand side of equation \eqref{E:normofnablasuboneoftheta} using 
\eqref{E:typetwotermsoneformcase},
and replacing the second term of the right hand side 
of equation \eqref{E:normofnablasuboneoftheta} using 
\eqref{E:typeonetermsoneformcase}, and using $\left( \sum_{i=1}^{n}
a_{i} \right)^{2}
\leq (n) \sum_{i=2}^{n} a_{i}^{2} $, we obtain

\begin{equation}\label{E:firstestimateofnormofnablasuboneoftheta}
\vert \nabla_{1} \boldsymbol{\theta}(p) \vert^{2} \leq 
n 
\left( \sum_{i=2}^{n} \vert \nabla_{i} \boldsymbol{\theta}(p) \vert^{2} \right)
+ n \vert D_{A}^{*} \boldsymbol{\theta}(p)	\vert^{2}
+n \vert D_{A} \boldsymbol{\theta}(p)	\vert^{2}.
\end{equation}
Adding $(n) \vert \nabla_{1} \boldsymbol{\theta}(p) \vert^{2} $ to 
both sides of inequality \eqref{E:firstestimateofnormofnablasuboneoftheta}
we obtain:
\begin{equation}\label{E:secondestimateofnormofnablasuboneoftheta}
(n+1) \vert \nabla_{1} \boldsymbol{\theta}(p) \vert^{2} \leq
(n) \vert \nabla \boldsymbol{\theta}(p) \vert^{2}+
 n \vert D_{A}^{*} \boldsymbol{\theta}(p)	\vert^{2}
+n \vert D_{A} \boldsymbol{\theta}(p)	\vert^{2}
\end{equation}
Dividing inequality \eqref{E:secondestimateofnormofnablasuboneoftheta}
by $n+1$ on both sides we obtain:
\begin{equation}\label{E:thirdestimateofnormofnablasuboneoftheta}
\vert \nabla_{1} \boldsymbol{\theta}(p) \vert^{2} \leq
\left(\frac{n}{n+1} \right) \vert \nabla \boldsymbol{\theta}(p) \vert^{2}+
 \vert D_{A}^{*} \boldsymbol{\theta}(p)	\vert^{2} +  \vert D_{A}^{*} \boldsymbol{\theta}(p)	\vert^{2}.
\end{equation}

Finally, we use the standard Kato inequalities \eqref{E:standardkatoforoneform}
together with inequality \eqref{E:thirdestimateofnormofnablasuboneoftheta}
to obtain
\begin{equation}\label{E:sharpversionofimprovedKatoineqforoneforms}
\vert \nabla \vert \boldsymbol{\theta}(p) \vert \vert^{2} 
\leq 
\left(\frac{n}{n+1} \right) \left(
 \vert \nabla \boldsymbol{\theta}(p) \vert^{2}+
 \vert D_{A}^{*} \boldsymbol{\theta}(p)	\vert^{2} +  \vert D_{A}^{*} \boldsymbol{\theta}(p)	\vert^{2}
 \right)
\end{equation}
at $p$ in our 
exponential gauge centered at $p$.
However, it follows from  formulae
(2.4) page 193, formula (2.12 ) page 194, and formula (2.13) page 194 of [2], that the inequality \eqref{E:sharpversionofimprovedKatoineqforoneforms} is gauge invariant.
Since $p$ is arbitrary, inequality holds in any gauge and at any point in our local trivialization.

Since inequality \eqref{E:sharpversionofimprovedKatoineqforoneforms} is inequality \eqref{E:pointwiseimprovedkatoinequalitytwoformcase}, we have proved
inequality \eqref{E:pointwiseimprovedkatoinequalitytwoformcase}.
This completes the proof of the theorem.
\end{proof}

These estimates are used in conjunction with the standard Weitzenbock formulae.
We remind the readers of these identities.
First, we have
\begin{theorem}\label{T:standardweitzenbockidentity}
\begin{equation}\label{E:standardweitzenbockidentity}
	\nabla_A^{*}\nabla_A V  = D^{*}_A D_A V  + D_A D^{*}_A V + S(F_A) V.
\end{equation}

Here, $ \nabla_A^{*}\nabla_A$ is the "rough Laplacian", $D_A$ is the exterior covariant differential, $D_A^{*}$ is the exterior covariant codifferential,
and  $S(F_A)$ is a bundle curvature term. There is no base curvature term, as we are assuming (locally) that the base manifold is an open domain of $\mathcal{R}^{n}$.
\end{theorem}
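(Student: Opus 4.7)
The plan is a direct computation in a synchronous frame, reducing the identity to the familiar book-keeping of exterior and interior multiplications together with the defining commutator of $\nabla_A$. Fix a point $p$ and choose a local exponential gauge centered at $p$ together with orthonormal coordinates, so that at $p$ one has $A(p)=0$, the Christoffel symbols vanish, and the coframe $\{e^i\}$ is parallel to first order. In this frame the formulas
\begin{equation*}
D_A V \;=\; \sum_i e^i \wedge \nabla_i V, \qquad
D_A^{*} V \;=\; -\sum_i \iota_{\partial_i} \nabla_i V
\end{equation*}
hold, and $\nabla_A^{*}\nabla_A V = -\sum_i \nabla_i \nabla_i V$ at $p$. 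Because the identity \eqref{E:standardweitzenbockidentity} is manifestly gauge invariant and tensorial (this is exactly the invariance argument already used for the Kato inequality in \eqref{E:thirdestimatefornormofnablasub1ofF}), it suffices to establish it at the arbitrary point $p$.

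The first step is to compose the two second-order operators $D_A D_A^{*}$ and $D_A^{*} D_A$ using the above formulas. At $p$ the connection forms drop, so one gets
\begin{equation*}
D_A D_A^{*} V + D_A^{*} D_A V \;=\; -\sum_{i,j}\Bigl( e^j\wedge \iota_{\partial_i}\, \nabla_j\nabla_i V \;+\; \iota_{\partial_j}\, e^i\wedge\, \nabla_j\nabla_i V \Bigr).
\end{equation*}
The second step is to use the basic anticommutation relation $\{e^i\wedge, \iota_{\partial_j}\} = \delta_{ij}\, \mathrm{Id}$ to rewrite $\iota_{\partial_j}(e^i\wedge\,\cdot)$ as $\delta_{ij} - e^i\wedge\iota_{\partial_j}$. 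Separating the diagonal piece $\delta_{ij}$ produces exactly $-\sum_j \nabla_j \nabla_j V = \nabla_A^{*}\nabla_A V$, which accounts for the rough Laplacian on the right-hand side.

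The third step is to collect the remaining terms; after relabeling $i\leftrightarrow j$ in one of them they combine into an antisymmetric sum
\begin{equation*}
\sum_{i,j} e^i\wedge \iota_{\partial_j}\,[\nabla_j,\nabla_i]V.
\end{equation*}
By definition of the connection curvature, $[\nabla_j,\nabla_i]V$ is the fiberwise action of $F_A(\partial_j,\partial_i)$ on $V$, so this sum is a purely algebraic, universal expression in $F_A$ applied to $V$; call it $S(F_A)V$. Combining the three steps gives \eqref{E:standardweitzenbockidentity} at $p$, and gauge invariance promotes it to every point in the trivialization.

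The only real obstacle is pure book-keeping: keeping track of signs and of which slot of $V$ the interior/exterior products act on when $V$ is a form of arbitrary degree. Once the anticommutation $\{e^i\wedge,\iota_{\partial_j}\}=\delta_{ij}$ is applied cleanly, the split into the rough Laplacian and the curvature endomorphism is automatic and no hard analysis is required; this is essentially the standard Bochner--Weitzenb\"ock derivation specialized to flat base and written in a normal frame.
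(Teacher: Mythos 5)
Your derivation is correct: it is the standard Bochner--Weitzenb\"ock computation, and it does establish the statement as the paper uses it, since the theorem leaves the precise algebraic form of $S(F_A)$ unspecified and your term $\sum_{i,j} e^i\wedge\iota_{\partial_j}[\nabla_j,\nabla_i]V$ (with the appropriate sign absorbed into the definition of $S$) is exactly such a bundle curvature endomorphism. The comparison with the paper is somewhat degenerate: the paper offers no argument of its own for Theorem~\ref{T:standardweitzenbockidentity}, only a pointer to Section 3 of Bourguignon--Lawson, and your computation is essentially the derivation carried out in that reference, so you have supplied the self-contained proof the paper delegates. Two small remarks. First, the exponential gauge is not actually needed: the identities $D_AV=\sum_i e^i\wedge\nabla_iV$ and $D_A^*V=-\sum_i\iota_{\partial_i}\nabla_iV$ hold in every gauge, and on the flat base $\mathbb{R}^n$ the coordinate coframe is globally parallel, so $\nabla_j$ commutes exactly with $e^i\wedge$ and $\iota_{\partial_i}$ at every point; the whole computation is therefore global and the gauge-invariance step is superfluous (harmless, but it slightly obscures that nothing is being checked ``only at $p$''). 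Second, when you invoke flatness you should note explicitly that $[\nabla_j,\nabla_i]V=F_A(\partial_j,\partial_i)\cdot V$ has no base-curvature contribution precisely because the coordinate fields commute and the Euclidean metric is flat; this is the content of the paper's closing sentence in the theorem statement, and it is the one place where the hypothesis that the base is a domain in $\mathbb{R}^n$ enters.
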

\begin{proof}
A good reference for this is section 3 of \cite{BL1981}. 
	\end{proof}

\begin{theorem}\label{T:weakWeitzenblockidentities} 
Let $(D_{A},\Phi)$ be a smooth solution of the field equations \eqref{E:eulerlagrangeeqs} in $\Omega$. Then
\begin{subequations}\label{E:smoothweitzenblockineq}
\begin{equation}\label{E:smoothweitzenblockineqforF}
\left( -\frac{1}{2} \right) 	\triangle \vert F_{A} \vert^{2}+
 \vert \nabla_{A} F_{A} \vert^{2} \leq \vert F_{A} \vert^{3} + \vert D_{A} \Phi \vert^{2} \vert F_{A} \vert + \vert \Phi \vert^{2} \vert F_{A} \vert^{2}  
 \end{equation}
 \begin{equation}\label{E:smoothweitzenblockineqforDphi}
 	\left(-\frac{1}{2} \right) \triangle \vert D_{A} \Phi \vert^{2} +
 \vert \nabla_{A} D_{A} \Phi \vert \leq  2 \vert F_{A} \vert \vert D_{A} \Phi \vert^{2} 
 + \vert \Phi \vert^{2} \vert D_{A} \Phi \vert^{2} +  \vert Q_{\Phi, \Phi }(\Phi) \vert \vert D_{A} \Phi \vert^{2} . 
 \end{equation}{}
\end{subequations}
\end{theorem}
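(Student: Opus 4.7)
The plan is to apply the pointwise Bochner identity
\[
\langle v, \nabla_A^{*}\nabla_A v\rangle \;=\; -\tfrac12 \triangle |v|^{2} + |\nabla_A v|^{2}
\]
separately with $v=F_A$ and with $v=D_A\Phi$, substitute the Weitzenbock decomposition from Theorem~\ref{T:standardweitzenbockidentity}, and then kill or estimate each resulting term using the Bianchi identity $D_A F_A = 0$, the Euler--Lagrange equations \eqref{E:eulerlagrangeeqs}, and the identity $D_A^{2}\Phi = F_A\!\cdot\!\Phi$ (curvature acting on sections). The bundle curvature term $S(F_A)$ is linear in $F_A$, so $|\langle v, S(F_A) v\rangle| \le C|F_A||v|^{2}$ in both cases.

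For $v = F_A$: the Bianchi identity gives $D_A^{*}D_A F_A = 0$, so only $D_A D_A^{*}F_A$ and $S(F_A)F_A$ survive. Using \eqref{E:firsteulerlagrangeeq} we rewrite $2D_A^{*}F_A = [\Phi,D_A\Phi]$ and apply $D_A$; Leibniz produces one piece quadratic in $D_A\Phi$ and one piece $[\Phi, D_A^{2}\Phi] = [\Phi, F_A\!\cdot\!\Phi]$. Pairing with $F_A$ and taking absolute values yields the three stated terms $|F_A|^{3}$, $|D_A\Phi|^{2}|F_A|$ and $|\Phi|^{2}|F_A|^{2}$ (absorbing universal bracket constants).

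For $v = D_A\Phi$: here $D_A(D_A\Phi) = F_A\!\cdot\!\Phi$ and \eqref{E:secondeulerlagrangeeq} gives $2D_A^{*}(D_A\Phi) = Q_{\Phi}(\Phi)$. The term $\langle D_A\Phi, D_A^{*}D_A(D_A\Phi)\rangle = \langle D_A\Phi, D_A^{*}(F_A\!\cdot\!\Phi)\rangle$ expands by Leibniz into $(D_A^{*}F_A)\!\cdot\!\Phi$ plus a contraction of $F_A$ with $D_A\Phi$; the first of these is re-expressed via the first field equation as $\tfrac12[\Phi,D_A\Phi]\!\cdot\!\Phi$, contributing $|\Phi|^{2}|D_A\Phi|^{2}$, while the second contributes $|F_A||D_A\Phi|^{2}$. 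The term $\langle D_A\Phi, D_A D_A^{*}(D_A\Phi)\rangle = \tfrac12\langle D_A\Phi, Q_{\Phi,\Phi}(\Phi)\,D_A\Phi\rangle$ gives the $|Q_{\Phi,\Phi}(\Phi)||D_A\Phi|^{2}$ piece, and $S(F_A)(D_A\Phi)$ paired with $D_A\Phi$ gives a second copy of $|F_A||D_A\Phi|^{2}$, accounting for the factor $2$ in the stated inequality.

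The main obstacle is bookkeeping rather than analysis: keeping track of the signs, contractions, and Lie-bracket constants when commuting $D_A$ past $\Phi$ and $F_A$, and making sure the identity $D_A^{2}\Phi = F_A\!\cdot\!\Phi$ is used in the right place so that the $|F_A|^{2}|\Phi|^{2}$ term appears only in the $F_A$ inequality and not in the $D_A\Phi$ one. Once all the Leibniz expansions are laid out, the estimates are direct Cauchy--Schwarz applications and the inequalities follow by combining the three pieces of the Weitzenbock decomposition.
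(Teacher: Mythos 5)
Your proposal is correct and follows essentially the same route as the paper: the Bochner identity $-\tfrac12\triangle|v|^{2}+|\nabla_A v|^{2}=\langle v,\nabla_A^{*}\nabla_A v\rangle$ combined with the Weitzenbock decomposition of Theorem~\ref{T:standardweitzenbockidentity}, with Bianchi killing $D_A^{*}D_A F_A$, the field equations \eqref{E:eulerlagrangeeqs} rewriting $D_A D_A^{*}F_A$ and $D_A D_A^{*}(D_A\Phi)$, the identity $D_A^{2}\Phi=[F_A,\Phi]$ supplying the $|\Phi|^{2}$ terms, and Cauchy--Schwarz on the curvature terms, exactly as in the paper's sketch (including the bookkeeping that yields the factor $2$ in \eqref{E:smoothweitzenblockineqforDphi}). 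No gaps beyond the same absorption of universal bracket constants that the paper itself performs.
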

\begin{proof}
We only give a sketch of the important points:
To prove \eqref{E:smoothweitzenblockineqforF}, we have:
\begin{align}\label{E:smoothweitzenblockineqforFandconsequences}
% &\nabla^{*} \nabla F_{A} =
& D_{A} D^{*}_{A} F_{A} + D^{*}_{A} D_{A} F_{A} + [ F_{A},F_{A}] = \\ \notag
& \left(\frac{1}{2} \right)
D_{A} [ \Phi, D_{A} \Phi ] + 0 + [F_{A}, F_{A} ] =
[F_{A} , F_{A} ] + \left(\frac{1}{2} \right)[ D_{A} \Phi, D_{A} \Phi]  \\ \notag
& + \left(\frac{1}{2} \right)[ \Phi, [ F_{A}, \Phi ]] .
\end{align}
In equation \eqref{E:smoothweitzenblockineqforF} $D_{A}$ is the exterior covariant derivative, and $D_{A}^{*}$ is the exterior covariant codifferential.

Recall, that the inner product $< ,>$ of p-forms and q-forms with coefficents that are sections of an associated bundle is defined by taking the inner product of the section valued coefficents and producting with inner product of the form parts. Thus the Hodge star operator can be considered as acting on the form part alone.
Thus we have (using that the inner product on sections is a metric compatible with the connection $\nabla_{A}$ )
\begin{equation}
d<F_{A},F_{A} > = 2<F_{A},\nabla_{A} F_{A} >	
\end{equation}

\begin{equation}\label{E:laplacianofnormequation}
d*(<F_{A},\nabla_A F_{A}>) =  2< \nabla_A F_{A},\nabla_A F_{A}>  +  2<F_{A}, \nabla_A^{*} \nabla_A F_{A}> 
.
\end{equation}
Thus
\begin{equation}\label{E:secondlaplacianofnormequation}
d^{*}
 d(<F_{A},F_A >)
 = 
\left[ 2< \nabla_A F_{A},\nabla_A F_{A}>  +  2<F_{A}, \nabla_A^{*} \nabla_A F_{A}> \right]
.
\end{equation}

Now apply \ref{T:standardweitzenbockidentity} to the last term of \eqref{E:secondlaplacianofnormequation}, with $V = F_{A}$, noting that 
$D_{A} F_{A} =0$ by Bianchi's identity. Then apply \eqref{E:smoothweitzenblockineqforFandconsequences} to the result.
The first term of the right hand side of \eqref{E:secondlaplacianofnormequation} accounts for the second term of the lefthand side of \eqref{E:smoothweitzenblockineqforF}.

Similarly we prove \eqref{E:smoothweitzenblockineqforDphi}, by using
the identity
\begin{equation}\label{E:smoothweitzenbockidentityforPhi}
\nabla_{A}^{*} \nabla_{A} (D_{A} \Phi ) = (D_{A} D_{A}^{*} + D_{A}^{*} D_{A} ) (D_{A} \Phi ) + [F_{A}, D_{A} \Phi ]
	\end{equation}
 and the field equations \eqref{E:eulerlagrangeeqs}.
\end{proof}

\begin{corollary}\label{C:estimateoflaplacianofnusquared}
Let $\nu = ( F_{A}, D_{A}\Phi )$. Then
\begin{equation}\label{E:estimateoflaplacianofnusquared}
- \triangle (\vert \nu \vert^{2}) + \left(1 + \frac{1}{n} \right)
\vert d (\vert \nu \vert ) \vert^{2} \leq
3 \vert F_{A} \vert \vert \nu \vert^{2} + Q_{1} \vert \nu \vert^{2}
+ 4 \vert Q_{\Phi}(\Phi) \vert^{2} .
\end{equation}
Here $Q_{1} = 2max \left(\vert \Phi \vert^{2} + \vert Q_{\Phi, \Phi}( \Phi) \vert^{2} \right)$.
\end{corollary}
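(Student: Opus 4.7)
The plan is to combine the two Weitzenbock inequalities of Theorem \ref{T:weakWeitzenblockidentities} with the improved Kato inequalities of Theorem \ref{T:pointwiseimprovedkatoinequalities}, where the error terms arising in Kato are controlled by the Bianchi identity and the Euler--Lagrange equations \eqref{E:eulerlagrangeeqs}. Interpreting $\nu=(F_A,D_A\Phi)$ as in the abstract/introduction, I would add inequalities \eqref{E:smoothweitzenblockineqforF} and \eqref{E:smoothweitzenblockineqforDphi} to obtain
\[
-\tfrac{1}{2}\triangle |\nu|^2 + |\nabla_A F_A|^2 + |\nabla_A D_A\Phi|^2 \leq |F_A|^3 + 3|F_A||D_A\Phi|^2 + |\Phi|^2|\nu|^2 + |Q_{\Phi,\Phi}(\Phi)||D_A\Phi|^2,
\]
and then bound $|F_A|^3,\,|F_A||D_A\Phi|^2 \leq |F_A||\nu|^2$ so that the first two RHS terms are dominated by $4|F_A||\nu|^2$ (which will be tightened to $3|F_A||\nu|^2$ by absorbing one $|F_A||\nu|^2$ against a commutator error).

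Next I would apply improved Kato. For $F_A$, Bianchi gives $D_A F_A = 0$ and the field equation \eqref{E:firsteulerlagrangeeq} gives $D_A^* F_A = \tfrac{1}{2}[\Phi,D_A\Phi]$, so inequality \eqref{E:pointwiseimprovedkatoinequalitytwoformcase} yields
\[
|\nabla_A F_A|^2 \geq \tfrac{n}{n-1}|d|F_A||^2 - \tfrac{1}{4}|[\Phi,D_A\Phi]|^2.
\]
For $\theta=D_A\Phi$, using $D_A(D_A\Phi) = [F_A,\Phi]$ and the second field equation $D_A^*(D_A\Phi) = \tfrac{1}{2}Q_\Phi(\Phi)$, inequality \eqref{E:pointwiseimprovedkatoinequalityoneformcase} yields
\[
|\nabla_A D_A\Phi|^2 \geq \tfrac{n+1}{n}|d|D_A\Phi||^2 - |[F_A,\Phi]|^2 - \tfrac{1}{4}|Q_\Phi(\Phi)|^2.
\]
Since $\tfrac{n}{n-1}\geq \tfrac{n+1}{n}=1+\tfrac{1}{n}$, both Kato bounds hold with the common constant $1+\tfrac{1}{n}$; summing and using the pointwise Cauchy--Schwarz inequality
\[
|d|\nu||^2 \leq |d|F_A||^2 + |d|D_A\Phi||^2
\]
(which follows from $|\nu|\,d|\nu| = |F_A|\,d|F_A|+|D_A\Phi|\,d|D_A\Phi|$) produces a lower bound of the form $(1+\tfrac{1}{n})|d|\nu||^2$ (modulo an overall factor of $2$ that is absorbed by multiplying the Weitzenbock inequality appropriately; I would follow the paper's normalization convention).

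Finally I would clean up the commutator error terms: the bounds $|[\Phi,D_A\Phi]|^2 \leq 2|\Phi|^2|D_A\Phi|^2 \leq 2|\Phi|^2|\nu|^2$ and $|[F_A,\Phi]|^2 \leq 2|F_A|^2|\Phi|^2 \leq |F_A|\cdot(|F_A||\nu|^2) + |\Phi|^4$, combined with the hypothesis $Q_1 \geq 2(|\Phi|^2+|Q_{\Phi,\Phi}(\Phi)|)$, let me fold everything into the three advertised terms $3|F_A||\nu|^2$, $Q_1|\nu|^2$, and $4|Q_\Phi(\Phi)|^2$. The main obstacle is purely bookkeeping: keeping track of the several numerical constants produced by Young's inequality so that the commutator error $|[F_A,\Phi]|^2$ (which scales like $|F_A|^2|\Phi|^2$ and thus lies in between the $|F_A||\nu|^2$ and $|\Phi|^2|\nu|^2$ terms) ends up in the right bucket without inflating the coefficient $3$. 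Once these estimates are aligned, the inequality \eqref{E:estimateoflaplacianofnusquared} follows directly.
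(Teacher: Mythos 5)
Your proposal is correct and follows essentially the same route as the paper: add the two Weitzenbock inequalities for $F_{A}$ and $D_{A}\Phi$, apply the improved Kato inequality to each factor with the Bianchi identity and the field equations \eqref{E:eulerlagrangeeqs} controlling the $D_{A}$ and $D_{A}^{*}$ error terms, and absorb the commutators into $Q_{1}\vert \nu \vert^{2}$ and the $\vert Q_{\Phi}(\Phi)\vert^{2}$ term. The only cleanup worth making is in your last paragraph: bound $\vert [F_{A},\Phi]\vert^{2} \leq 2\vert \Phi \vert^{2}\vert F_{A}\vert^{2} \leq Q_{1}\vert \nu \vert^{2}$ directly (your Young-type split leaves a spurious $\vert \Phi \vert^{4}$ and an $\vert F_{A}\vert^{2}\vert \nu \vert^{2}$ term that do not fit the stated right-hand side), and note that $\vert F_{A}\vert^{3} + 3\vert F_{A}\vert \vert D_{A}\Phi \vert^{2} \leq 3\vert F_{A}\vert \vert \nu \vert^{2}$ already yields the coefficient $3$ with no absorption trick, which is exactly how the paper (loose with its constants) proceeds.
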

\begin{proof}
If we add equation \eqref{E:smoothweitzenblockineq} 
and equation \eqref{E:smoothweitzenblockineqforDphi}	, we get:
\begin{align}
&- \triangle (\vert \nu \vert^{2}) + \vert \nabla_{A} \nu \vert^{2}
\leq
\vert F_{A} \vert^{3}+ 3 \vert F_{A} \vert \vert 	D_{A} \Phi \vert^{2}
+ \vert \Phi \vert^{2}
\left(
\vert F_{A} \vert^{2} + \vert D_{A} \Phi \vert^{2}
\right)\\
&+  \vert 	Q_{\Phi,\Phi} (\Phi) \vert \vert	 D_{A} \Phi \vert^{2} 
\leq
3 \vert F_{A} \vert \vert \nu \vert^{2} +
\left(
\vert \Phi \vert^{2}
 +  \vert	Q_{\Phi,\Phi}(\Phi) \vert \vert \nu \vert^{2}
\right) .
\end{align}
From theorem \eqref{T:pointwiseimprovedkatoinequalities}, we get:
\begin{align}
&\left(
1 + \frac{1}{n}
\right)
 \vert d (\vert \nu \vert) \vert^{2}
 \leq
 \vert \nabla_{A} \nu \vert^{2}+
 \vert D_{A}^{*} F_{A} \vert^{2} +
 \vert D_{A}^{*} D_{A} \Phi \vert^{2}+
 \vert D_{A} D_{A} \Phi \vert^{2} \\
 &\leq 
 \vert 	\nabla_{A} \nu \vert^{2}+
 \vert \Phi \vert^{2} \vert \nu \vert^{2} +
 \vert Q_{\Phi}(\Phi) \vert^{2}.
\end{align}
Putting this inequalities together gives the result.
\end{proof}

\section{Bounds on the Solutions of an Elliptic Inequality: The Smooth Case }
\label{S:boundssmoothcase}
In this section we prove that a smooth function $f$ which satisfies an elliptic inequality in $\Omega$ is bounded in the interior of $\Omega$ in terms of its $X^{2}(\Omega)$ norm. This result, which is weaker than the result of  section \ref{S:Boundsthesingularcase}, can be used to prove that the limit of smooth solutions is smooth on the complement of a set of finite $n-4$
Hausdorff dimension. Also, it is a warmup for section \ref{S:Boundsthesingularcase}.

We prove this result for $\Omega_{1} \subset \Omega_{4} $, where $\Omega_{l} =
[-l, l ]^{n}$. By the results on scaling and monotonicity in section \ref{S:ScalingandmaxPHI}, Appendix A and section \ref{S:monotonicityformulae}, it can be modified for arbitrary domains.

We use the notation of Appendix A, where $X^{k} = M^{ \frac{n}{k}, \frac{4}{k} }$ for the Morrey Space with integration power
$\frac{n}{k}$ and scaling power $\frac{4}{k} $. The formulae are particularly simple in this notation.
\begin{theorem}\label{T:smoothsubsolutionmorreyestimate}
Let $u > 0$, and $f \geq 0$ be smooth functions in $\Omega_{4}$
with $ f \in X^{2}(\Omega_{4} )$.
\begin{equation}\label{E:semilinearsubequation}
-\triangle f + \alpha \left( \frac{\vert df \vert^{2}}{f} \right) - u f \leq Q_{1} f
.
\end{equation}
 Then there exist $\eta_{k}$ and $K_{k}$, depending on $\alpha >0 $ and $0 <k \leq 1$,
such that if $\Vert u \Vert_{X^{2}(\Omega_{4})} \leq \eta_{k}$,
\begin{equation}\label{E:smoothsubsolutionmorreyestimate}
\Vert f \Vert_{X^{k} (\Omega_{1}) } \leq K_{k} \Vert f \Vert_{X^{2} (\Omega_{4}) }
.
\end{equation}
Here $\eta_{k}$, and $K_{k}$ depend on the norm of the inversion of $\triangle$
on $X^{k'+ 2}$ and the norm of $X^{k'} \subset X^{2+k'}_{2}$, where $k \leq k' \leq 1$.
\end{theorem}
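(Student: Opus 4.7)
The plan is to eliminate the Kato-type gradient term by a power substitution, converting \eqref{E:semilinearsubequation} into a pure subsolution inequality, and then to bootstrap in Morrey spaces, using the smallness of $\Vert u\Vert_{X^{2}}$ to absorb the resulting coupling at each level of the iteration.

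\textbf{Step 1 (power substitution).} Regularize by $f_{\epsilon}:=f+\epsilon$ and set $g_{\epsilon}:=f_{\epsilon}^{p}$ for some $p\in[1-\alpha,1)$; this interval is non-empty precisely because $\alpha>0$. A direct calculation gives
\[
-\triangle g_{\epsilon}=-p f_{\epsilon}^{p-1}\triangle f-p(p-1)f_{\epsilon}^{p-2}\vert df\vert^{2},
\]
and inserting \eqref{E:semilinearsubequation} yields
\[
-\triangle g_{\epsilon}\leq p(u+Q_{1})g_{\epsilon}-p(p+\alpha-1)f_{\epsilon}^{p-2}\vert df\vert^{2}\leq p(u+Q_{1})g_{\epsilon}.
\]
Thus $g_{\epsilon}$ satisfies a clean linear subsolution inequality, and because $p<1$ it lies in a Morrey space strictly better than $X^{2}$; this is the room in which the iteration will run.

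\textbf{Step 2 (cutoff and inversion).} Given nested cubes $\Omega_{1}\subset\Omega_{r}\subset\Omega_{r'}\subset\Omega_{4}$, pick a smooth cutoff $\eta$ with $\eta\equiv 1$ on $\Omega_{r}$ and support in $\Omega_{r'}$. The product rule gives
\[
-\triangle(\eta g_{\epsilon})\leq \eta\, p(u+Q_{1})g_{\epsilon}-2\nabla\eta\cdot\nabla g_{\epsilon}-(\triangle\eta)g_{\epsilon}.
\]
Invert $\triangle$ with Dirichlet boundary conditions on $\Omega_{r'}$ using the cube inversion of Appendix~A. The cutoff error terms live in $\Omega_{r'}\setminus\Omega_{r}$ and are controlled by $\Vert g_{\epsilon}\Vert_{X^{k}(\Omega_{4})}$ via standard interpolation.

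\textbf{Step 3 (Morrey bootstrap).} Hölder's inequality for Morrey spaces (Appendix~A) gives
\[
\Vert u\,g_{\epsilon}\Vert_{X^{k'+2}(\Omega_{r'})}\leq \Vert u\Vert_{X^{2}(\Omega_{4})}\Vert g_{\epsilon}\Vert_{X^{k'}(\Omega_{r'})},
\]
and the Morrey mapping property quoted at the end of the theorem gives $\Vert\triangle^{-1}h\Vert_{X^{k'}}\lesssim\Vert h\Vert_{X^{k'+2}}$. Combined these produce, at each level $k'\in[k,1]$,
\[
\Vert g_{\epsilon}\Vert_{X^{k'}(\Omega_{r})}\leq C\,\Vert u\Vert_{X^{2}(\Omega_{4})}\Vert g_{\epsilon}\Vert_{X^{k'}(\Omega_{r'})}+C\Vert g_{\epsilon}\Vert_{X^{k}(\Omega_{4})}.
\]
A standard nested-cutoff (hole-filling) absorption, together with the smallness hypothesis $C\Vert u\Vert_{X^{2}}<\tfrac12$, eliminates the first term on the right, and one iterates $k'$ from $1$ down to $k$. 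Finally, sending $\epsilon\downarrow 0$ and using $f=\lim g_{\epsilon}^{1/p}$ yields \eqref{E:smoothsubsolutionmorreyestimate}.

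\textbf{Main obstacle.} The technical core is Step~3: the Morrey-index book-keeping and the sharp absorption at every level $k'$, both of which rely on the continuity of $\triangle^{-1}$ on $X^{k'+2}$ and the Sobolev-type inclusion $X^{k'}\subset X^{2+k'}_{2}$ from the appendix. The improved Kato constant $\alpha>0$ is essential in Step~1, since it supplies the exponent $p<1$ that starts the iteration; without it there is no initial Morrey-space gain to exploit. The $\epsilon$-regularization is minor but required to make $f_{\epsilon}^{p}$ smooth at zeros of $f$.
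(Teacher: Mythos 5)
Your Step 1 is fine (it is essentially the substitution $\overline{f}=f^{1-\alpha}$ that the paper itself uses later, in the singular case of Section 5), but the proof breaks at the point you dismiss with ``standard interpolation'': the cutoff commutator term $-2\nabla\eta\cdot\nabla g_{\epsilon}$. Nothing in the hypotheses controls $\nabla g_{\epsilon}$ (equivalently $df$) in any Morrey space: the data only give $f\in X^{2}(\Omega_{4})$, and there is no equation-independent interpolation inequality bounding $\Vert\nabla g_{\epsilon}\Vert$ by norms of $g_{\epsilon}$ alone. This term is precisely why the paper first proves the reverse-Poincar\'e/Caccioppoli estimate (Lemma \ref{L:morreyreversepoincareforsubsolutions}, and its singular analogue Lemma \ref{L:singularweightedreversepoincareestimate}), which uses the $\alpha\,\vert df\vert^{2}/f$ term in integrated form to show $\vert df\vert/f^{1/2}\in X^{2}(\Omega_{3})$; only then can the cutoff term be written as $d\hat{\psi}\,df=\bigl(df/f^{1/2}\bigr)f^{1/2}\in X^{2}\otimes X^{1}\hookrightarrow X^{3}$ and fed to the inverse Laplacian. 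Note that the paper needs this lemma \emph{in addition to} the power substitution in Section 5, so your pointwise use of $\alpha>0$ in Step 1 does not replace it. If you instead try to integrate by parts and write the commutator as $-2\,\mathrm{div}(g_{\epsilon}\nabla\eta)+(\triangle\eta)g_{\epsilon}$, you then need a mapping property of $\triangle^{-1}\mathrm{div}$ gaining one derivative on Morrey spaces, which is not among the tools of Appendix A and would have to be proved. Without a usable bound on this term your displayed bootstrap inequality is unavailable (and as written it is also circular, since its error term is measured in $X^{k}(\Omega_{4})$, a norm not controlled by the hypothesis $f\in X^{2}(\Omega_{4})$, while your iteration ``in $k'$'' has no mechanism that actually improves the Morrey index).

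A second, smaller gap: you ``invert $\triangle$ with Dirichlet boundary conditions'' directly on a differential \emph{inequality}. To convert $-\triangle(\eta g_{\epsilon})\leq h$ into a norm bound for $\eta g_{\epsilon}$ you must introduce the comparison function $\phi$ solving the corresponding equation and invoke a maximum principle to conclude $\eta g_{\epsilon}\leq\phi$; this is exactly the paper's $g=\hat{\psi}f-\hat{\phi}\leq 0$ step, which for the operator $-\triangle-u$ requires the Morrey--Poincar\'e eigenvalue argument of Appendix B (Theorems \ref{T:morreypoincareineq} and \ref{T:gnegativitycondition}) and is where the smallness of $\Vert u\Vert_{X^{2}}$ actually enters the paper's proof. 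In the smooth setting you could get away with the classical maximum principle for $-\triangle$ after moving $uf$ to the right, but the step must be stated; as written your argument passes from an inequality to an inversion bound with no justification.
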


In fact, Theorem \eqref{T:smoothsubsolutionmorreyestimate} is true without the condition $\alpha >0$. However, we prove it with this condition as a warm
up for the proof of Theorem \eqref{T:Morreyboundforsolsofellipticineqoffsingset} of Section \ref{S:Boundsthesingularcase}.

Recall that the norm that we use for $X^{k}_{\gamma}(\Omega_{l} )$ denotes the cutoff of the odd extension. For $\gamma >0$, this imposes Dirichlet boundary conditions, where the condition $\gamma>0$ is necessary.

The first Lemma is a straightforward computation. 

\begin{lemma}\label{L:morreyreversepoincareforsubsolutions}
Suppose $u,f >0$ are smooth, and
\begin{equation}\label{E:semilinearsubequationwithplusc}
- \triangle f + \alpha \left( \frac{\vert df \vert^{2}}{f} \right) \leq (u+Q_{1}) f.
\end{equation}
Then
\begin{equation}\label{E:secondreversepoincareinequality}
\left
\Vert \frac{\vert df \vert}
{f^{1/2}} 
\right 
\Vert^{2}_{X^{2}(\Omega_{3}) } 
\leq C_{1} 
\Vert f \Vert^{2}_{ X^{2} (\Omega_{4})} 
\left( Q_{1}+ \Vert u 
\Vert_{X{^2}(\Omega_{4} ) } \right).
\end{equation}
\end{lemma}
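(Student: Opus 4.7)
This is a Caccioppoli-type reverse Poincar\'e estimate. The natural approach is to multiply the differential inequality by a test function and integrate by parts, then use Young's inequality to absorb the Dirichlet-type term. The positivity hypothesis $\alpha>0$ should enter precisely to allow this absorption.

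\textbf{Step 1 (local Caccioppoli).} Fix a point $y_0\in\Omega_3$ and a radius $\rho$ so that $B_{2\rho}(y_0)\subset\Omega_4$. Choose a smooth cutoff $\eta$ with $\eta\equiv 1$ on $B_\rho(y_0)$, $\mathrm{supp}\,\eta\subset B_{2\rho}(y_0)$, and $|\nabla\eta|\le C/\rho$. Multiply \eqref{E:semilinearsubequationwithplusc} by $\eta^2$ and integrate over $\Omega_4$. Integrating by parts on the Laplacian term gives
\begin{equation*}
-\int\eta^2\triangle f=2\int\eta\,\nabla\eta\cdot\nabla f,
\end{equation*}
so the inequality becomes
\begin{equation*}
2\int\eta\,\nabla\eta\cdot\nabla f+\alpha\int\eta^2\frac{|\nabla f|^2}{f}\le\int\eta^2(u+Q_1)f.
\end{equation*}

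\textbf{Step 2 (absorb the cross term).} Rewrite $2\eta\,\nabla\eta\cdot\nabla f=2(\eta\, f^{-1/2}\nabla f)\cdot(f^{1/2}\nabla\eta)$ and apply Young's inequality with parameter $\alpha/2$:
\begin{equation*}
\bigl|2\eta\,\nabla\eta\cdot\nabla f\bigr|\le\frac{\alpha}{2}\,\eta^2\frac{|\nabla f|^2}{f}+\frac{2}{\alpha}\,f\,|\nabla\eta|^2.
\end{equation*}
Moving the first term to the left exploits $\alpha>0$ and leaves
\begin{equation*}
\frac{\alpha}{2}\int_{B_\rho(y_0)}\frac{|\nabla f|^2}{f}\le\int_{B_{2\rho}(y_0)}(u+Q_1)f+\frac{C}{\alpha\rho^2}\int_{B_{2\rho}(y_0)}f.
\end{equation*}

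\textbf{Step 3 (pass to Morrey norms).} To form the $X^2$ norm of $|df|/f^{1/2}$ I divide by the appropriate power of $\rho$ (dictated by the scaling convention $X^k=M^{n/k,4/k}$ of Appendix A) and take the supremum over admissible balls $B_\rho(y_0)\subset\Omega_3$. The three terms on the right are handled by the Morrey H\"older product inequality: $\int_{B_{2\rho}}uf$ is controlled by $\|u\|_{X^2(\Omega_4)}\|f\|^2_{X^2(\Omega_4)}$ (with the $\rho$ scaling factored out), while the $Q_1\int f$ and $\rho^{-2}\int f$ terms are controlled by $Q_1\|f\|^2_{X^2(\Omega_4)}$ and a constant multiple of $\|f\|^2_{X^2(\Omega_4)}$ using the embedding / Poincar\'e-type inequalities recorded in Appendix A, together with the fact that the Dirichlet extension lets one treat $f$ as vanishing outside $\Omega_4$.

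\textbf{Main obstacle.} The calculus behind Step~2 is elementary --- the substantive point is the bookkeeping of Step~3: one must match the scaling exponents of $X^2$ on both sides so that the $\rho^{-2}\int f$ term arising from $|\nabla\eta|^2$ combines with the $\int uf$ and $Q_1\int f$ terms into a single estimate proportional to $\|f\|^2_{X^2(\Omega_4)}(Q_1+\|u\|_{X^2(\Omega_4)})$. This is exactly the kind of Morrey-H\"older computation packaged in Appendix A, and explains the author's description of the lemma as a ``straightforward computation.'' Positivity of $\alpha$ is essential; without it the cross term produced by integration by parts cannot be absorbed.
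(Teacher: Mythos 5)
Your proposal is correct and follows essentially the same route as the paper: a cutoff/Caccioppoli estimate on balls $B_r(y)$ with $y\in\Omega_3$, followed by Cauchy--Schwarz on $B_{2r}$, where $\left(\int_{B_{2r}}f^2\,(dx)^n\right)^{1/2}\le C r^{\frac{n}{2}-2}\Vert f\Vert_{X^2(\Omega_4)}$ produces exactly the $r^{n-4}$ weight that matches the $X^2(\Omega_3)$ norm of $\vert df\vert/f^{1/2}$. The only (minor) difference is that the paper multiplies by the cutoff $\Psi_r$ to the first power and integrates by parts twice, so the Laplacian falls entirely on the cutoff ($\vert\triangle\Psi_r\vert\le C r^{-2}$) and no cross term or Young absorption is needed, whereas you use $\eta^2$ and absorb the cross term via Young's inequality using $\alpha>0$; both give the same local estimate, and note that H\"older actually yields a bound linear rather than quadratic in $\Vert f\Vert_{X^2(\Omega_4)}$ --- the same discrepancy with the stated $\Vert f\Vert^2_{X^2(\Omega_4)}$ that is present in the paper's own computation, which the paper resolves by "adjusting constants" (and later by normalizing $\Vert f\Vert_{X^2}=1$).
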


\begin{proof}
Let
\begin{equation}\label{E:definitionofPsi}
\Psi =
\begin{cases}
1, \quad \text{for} \quad &  t \leq 1 \\
0, \quad \text{for} \quad & t \geq 2
\end{cases}
\end{equation}
be a smooth cutoff function, and for arbitrary $y \in \Omega_{3}$, let
$\Psi_{r}(x) = \Psi( \frac{\vert x-y \vert}{r} )$, $r < 1$.
Multiply equation \eqref{E:semilinearsubequationwithplusc}, by $\Psi_{r}(x)$, integrate and move the term 
$\int \triangle  \Psi_{r}(x) f(x) \, (dx)^{n} = 
\int   \Psi_{r}(x)  \triangle f(x) \, (dx)^{n} $ to the right hand side.
This gives,
\begin{equation}\label{E:weakcaccipoliestimate}
\alpha \int \Psi_{r}(x) \frac{\vert df (x) \vert^{2}}{f(x)} \, (dx)^{n} \leq
\int 
\left[
\vert \triangle (\Psi_{r}(x) ) \vert	
+ \Psi_{r}(x) (u+c)
\right] f
%[ \vert \triangle \Psi_{r}(x) \vert [( \Psi_{r}(x) + u(x)c ) f ]
 \, (dx)^{n}
\end{equation}
Now
\begin{equation}\label{E:ballweakcaccipoliestimate}
\alpha \int_{\vert x-y\vert \leq r }   \left[ \frac{\vert df(x) \vert^{2}} {f(x)}  \right] \, (dx)^{n} \leq
\alpha \int \Psi_{r}(x)  \left[ \frac{\vert df(x) \vert^{2}}
{f(x)}  \right] \, (dx)^{n}
 \end{equation}
\begin{equation}\label{E:pointwiseestimateoflapPhisubr}
\vert \triangle\Phi_{r}(x) \vert \leq C_{2} r^{-2}
\end{equation}

\begin{align}\label{E:integralsubestimateforf}
& \int \vert  \triangle \Psi_{r}(x) \vert f(x)
\, (dx)^{n} )^{\frac{1}{2} }
\leq
C_{2} r^{-2} 
\left( \int_{\vert x-y \vert \leq 2r} f(x)^{2} \, (dx)^{n} \right)^{\frac{1}{2} }\left( \int_{\vert x-y \vert \leq 2r} 1 \, (dx)^{n} \right)^{\frac{1}{2}}\\ \notag
& \leq C_{3} r^{n-4} \Vert f \Vert_{X^{2}(\Omega_{4})}  
\end{align}
and
\begin{equation}
\int  \Psi_{r} (u+ Q_{1}) f \leq  (2r)^{n-4} \Vert u+Q_{1}  \Vert_{X^{2}(\Omega_{4})}
 \Vert f \Vert_{X^{2}(\Omega_{4})}.
\end{equation}

Putting this all together gives:
\begin{equation}
\alpha \int_{\vert x-y \vert \leq r}
\frac{\vert df \vert^{2}}{f}
\leq C_{3} ( r^{n-4} + (2r)^{n-4}\Vert u+Q_{1}  \Vert_{X^{2}(\Omega_{4})} )
(\Vert f \Vert_{X^{2}(\Omega_{4})} ).
\end{equation}
By adjusting the constants we get the required estimate.
Because $\Vert u \Vert_{X^{2}(\Omega_{4})}$ is already small, we can absorb it
in another constant.
\end{proof}

The next step in the proof of theorem \ref{T:smoothsubsolutionmorreyestimate}
is to bound $\Vert f \Vert_{X^{2}(\Omega_{4})} $.

Choose another smooth test function,
\begin{equation}
\hat{\psi}(x) =
\begin{cases}
0  & \quad \text{for} \quad x \in \mathcal{R}^{n} - \Omega_{3} \\
1. &  \quad \text{for} \quad x \in \Omega_{2}.	
\end{cases}
\end{equation}

According to theorem \ref{T:laplacianinversiononmorreyspaces}, in Appendix A, if $u$ is sufficently small, we can solve
\begin{equation}\label{E:semilinearlaplaceequationforphihat}
- \triangle \hat{\phi} -u \hat{\phi} =
Q_{1} \hat{\psi} f  - 2 d\hat{\psi} d f -
[ \triangle (\hat{\psi} ) ] f
\end{equation}
for $\hat{\psi} \in X^{3}_{2} (\Omega_{3} )$, if we can get an estimate of
the right hand side of equation \eqref{E:semilinearlaplaceequationforphihat}
in $X^{3}(\Omega_{3} )$. Since, $f \in X^{2} (\Omega_{3} ) \subset X^{3} (\Omega_{3} )$, the first and third terms are fine.
But, $df = \left(
\frac{df}
{f^{\frac{1}{2}}}
 \right)
  (f^{\frac{1}{2}})$.
  According to lemma \eqref{L:morreyreversepoincareforsubsolutions},
  $\frac{df}
{f^{\frac{1}{2}}}$ can be estimated in $X^{2}(\Omega_{3})$.
  Equation \eqref{E:powerlaw} of Appendix A
 shows that $ \Vert f^{\frac{1}{2}} \Vert_{X^{1}(\Omega_{3})} \leq 
\Vert f \Vert_{X^{2}(\Omega_{3})}^{\frac{1}{2}}$.
Moreover $X^{2}(\Omega_{3}) \otimes X^{1}(\Omega_{3}) \hookrightarrow
X^{3}(\Omega_{3})$ by multiplication. Hence the right hand side can be estimated in $X^{3}(\Omega_{3})$ by
\begin{equation}
( c + max \vert \triangle \hat{\psi} \vert ) \Vert f \Vert_{X^{2}(\Omega_{3})}
+ C_{4} \Vert f \Vert_{X^{3}(\Omega_{4})}.
\end{equation}
Hence,
\begin{equation}\label{E:morreynormestimateofphihatintermsofmorreynormoff}
\Vert \hat{\phi} \Vert_{X^{1}(\Omega_{3})} 
\leq
C_{5} \Vert \hat{\phi} \Vert_{X^{3}_{2}(\Omega_{3})}
\leq
C_{6}
 \Vert f \Vert_{X^{2}(\Omega_{4})}
\end{equation}
 where we have used the norm of the Morrey-Sobolev embedding
$ X^{3}_{2}(\Omega_{3}) \hookrightarrow X^{1}(\Omega_{3})
$.
We omit the dependence on $u$, because the norm involved is small by assumption.

Now let $g = \hat{\psi}f -\hat{\phi}$.
\begin{equation}\label{E:subellipticinequalityforg}
- \triangle g - ug \leq 0.
\end{equation}
According to theorem \eqref{T:gnegativitycondition} of Appendix
B, 
, 
if $\Vert u \Vert_{X^{2}(\Omega_{3}) } $
is sufficently small, then 
$g \leq 0$, and $\hat{\psi} f \leq 	\hat{\phi}$.
Then inequality \eqref{E:morreynormestimateofphihatintermsofmorreynormoff}
immediately transfers to:
\begin{equation}
	\Vert \hat{\psi} f \Vert_{X^{1}(\Omega_{3})} \leq
	C_{7}
	\Vert  f \Vert_{X^{2}(\Omega_{4})}.
\end{equation}
Now we take a second cutoff function
\begin{equation}
\bar{\psi} =
\begin{cases}
0 \quad \text{if} \quad & x \in \mathcal{R}^{n} - \Omega_{2} \\
1 \quad \text{if} \quad & x \in \Omega_{1}
\end{cases}.
\end{equation}
Note that $d ( \hat{\psi} \bar{\psi} ) = \triangle (\hat{\psi} \bar{\psi} )
= 0 $, and $\hat{\psi} \bar{\psi} = \bar{\psi}$.
Now
\begin{equation}\label{E:ellipticequationforpsihatpsibar}
- \triangle (\bar{\psi} \hat{\phi}   ) -
u \bar{\psi} \hat{\phi} =
c \bar{\psi} f -2 d \bar{\psi} d \hat{\phi} - ( \triangle \bar{\psi}) \hat{\phi}.
\end{equation}
We have $\bar{\psi} f \in X^{1}(\Omega_{2})$, and $\triangle (\bar{\psi} )\hat{\phi}    \in X^{1}(\Omega_{2})$, but $2 d \bar{\psi} d \hat{\phi}$
is only estimated in $X^{3}_{1}(\Omega_{2}) \subset X^{2}(\Omega_{2})$.
We cannot invert $ -\triangle - u$ on $X^{2}$, no matter how small
$\Vert u \Vert_{X^{2}(\Omega_{2})}$ is.
However $X^{2}(\Omega_{2}) \subset X^{2+ k}(\Omega_{2})$, and for
$\Vert u \Vert_{X^{2}(\Omega_{2})} \leq \eta_{k}$, we can invert 
$-\triangle - u$ on $ X^{2+ k}(\Omega_{2})$.
Now we have an estimate on $\bar{\psi} \hat{\phi} \in 
 X^{2+k}_{2}(\Omega_{2})
 \subset X^{2}(\Omega_{2})
 $, which transfers to
$ f \in X^{k}(\Omega_{1})$.
Note, that this problem is not linear in $f$, but it scales linearly in $f$.
This allows us to fix the dependence in the conclusion as linear in
$ \Vert f \Vert_{X^{2}}(\Omega_{4})$.

\section{Bounds on the solution of an elliptic inequality: the singular case}
\label{S:Boundsthesingularcase}
This section is similar to section \ref{S:boundssmoothcase}, except that we allow singular sets in $\Omega_{4}$.
\begin{theorem}\label{T:Morreyboundforsolsofellipticineqoffsingset}
Let $u \geq 0$, and $f>0$, be smooth functions on $\Omega_{4} - \mathcal{S}$,
where $\mathcal{S}$ is a closed set of finite $n-4$ Hausdorff dimension.
If 
$f \in X^{2}(\Omega_{4}) $,
$ 0 < k <4$ and
\begin{equation}\label{E:singularsubinequality}
-\triangle f + \alpha \left( \frac{\vert df \vert }{f} \right) -uf \leq Q_{1}f
\end{equation}
then there exist constants $\eta_{k} > 0$, and $\kappa_{k} > 0$, such that
if $\Vert u \Vert_{X^{2}(\Omega_{4}) } < \eta_{k}$, then $f \in X^{k}(\Omega_{1} )$. Moreover

\begin{equation}\label{E:morreyboundonfsingularcase}
\Vert f \Vert_{X^{k}(\Omega_{1})} \leq \kappa_{k} 
\Vert f \Vert_{X^{2}(\Omega_{4}) }.
\end{equation} 
\end{theorem}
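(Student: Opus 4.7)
The plan is to follow the scheme of Theorem \ref{T:smoothsubsolutionmorreyestimate} almost verbatim, but everywhere replace the test functions $\psi_r$ and $\hat\psi$ by $\chi_\epsilon \psi_r$ and $\chi_\epsilon \hat\psi$, where $\chi_\epsilon$ is a smooth cutoff that vanishes in a neighborhood of $\mathcal{S}$ and equals $1$ away from it. The hypothesis $\mathcal{H}^{n-4}(\mathcal{S})<\infty$ is precisely what forces the extra error terms introduced by $\chi_\epsilon$ to vanish in the limit $\epsilon\downarrow 0$, which effectively reduces the singular case to the smooth case already treated in Section \ref{S:boundssmoothcase}.

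Concretely, for each $\epsilon>0$ I would use the finite Hausdorff measure assumption to pick a cover of $\mathcal{S}$ by balls $B_{r_i}(x_i)$ with $r_i<\epsilon$ and $\sum_i r_i^{n-4}\leq C\mathcal{H}^{n-4}(\mathcal{S})$. Take $\chi_\epsilon$ supported outside $\bigcup_i B_{r_i}$, equal to $1$ outside $\bigcup_i B_{2r_i}$, with $|d\chi_\epsilon|\leq Cr_i^{-1}$ and $|\triangle\chi_\epsilon|\leq Cr_i^{-2}$ on each annulus. The key estimate is
\begin{equation*}
\int_{\Omega_4}|d\chi_\epsilon|^{2}\,(dx)^n+\int_{\Omega_4}|\triangle\chi_\epsilon|\,(dx)^n\leq C\sum_i r_i^{n-2}\leq C\epsilon^{2}\sum_i r_i^{n-4}\longrightarrow 0.
\end{equation*}

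With these cutoffs in hand, the proof proceeds in the same three stages as in the smooth case. First, I would multiply \eqref{E:singularsubinequality} by $\chi_\epsilon^{2}\psi_r$ and integrate over $\Omega_4-\mathcal{S}$, where $f$ is smooth, obtaining the analogue of Lemma \ref{L:morreyreversepoincareforsubsolutions}; the extra cross terms are handled by Cauchy--Schwarz together with the bound above, using that $f\in X^{2}(\Omega_4)$ forces $\int_{B_r}f^{2}\leq C r^{n-4}\|f\|_{X^{2}}^{2}$, so that all new pieces tend to zero uniformly in $\epsilon$. Second, one solves the regularized Poisson equation \eqref{E:semilinearlaplaceequationforphihat} for $\hat\phi\in X^{3}_{2}(\Omega_3)$ exactly as before, since its right hand side makes global sense (it involves only the $L^{2}_{\mathrm{loc}}$ data $f$ and $df/f^{1/2}$). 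Third, the comparison principle of Appendix B is applied to $g=\hat\psi f-\hat\phi$ on $\Omega_4-\mathcal{S}$; again a cutoff $\chi_\epsilon$ justifies testing and passage to the limit, yielding $\hat\psi f\leq\hat\phi$ globally. The final bootstrap from $X^{2}$ to $X^{k}$ via the second cutoff $\bar\psi$ and the inversion of $-\triangle-u$ on $X^{2+k'}$ (for $\|u\|_{X^{2}}<\eta_k$) then goes through unchanged, with the scaling-linear dependence on $\|f\|_{X^{2}(\Omega_4)}$ preserved.

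The main obstacle is the removability of $\mathcal{S}$ in the maximum principle step: one must verify that, even though $f$ is a priori only smooth off $\mathcal{S}$ and only in $X^{2}$ globally, the inequality $\hat\psi f\leq\hat\phi$ holds across $\mathcal{S}$ in a strong enough sense to be fed back into the next elliptic step. This is where the codimension-four hypothesis is indispensable: it is exactly what guarantees that the capacity-type quantities appearing in the cutoff error estimates (and in the boundary contributions on $\partial\bigcup_i B_{r_i}$) go to zero, and that concentrated mass of $\int f^{2}$ near $\mathcal{S}$ is controlled. Once this removability is established, every remaining estimate --- reverse Poincar\'e, inversion of $-\triangle-u$ on Morrey spaces, Morrey--Sobolev embedding, and the iteration from $X^{2}$ through $X^{2+k'}$ to $X^{k}$ --- is literally that of Theorem \ref{T:smoothsubsolutionmorreyestimate}.
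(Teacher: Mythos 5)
Your stages 1 and 2 are essentially the paper's own argument: the reverse Poincar\'e estimate off $\mathcal{S}$ is proved there (Lemma \ref{L:singularweightedreversepoincareestimate}) with a cutoff built from a regularized distance function rather than a ball covering, but the mechanism is the same --- the $\epsilon^{-2}$ from $\triangle$ of the cutoff is cancelled by the codimension-four volume bound $\mathrm{vol}\{s\leq 2\epsilon\}\lesssim\epsilon^{4}$ after Cauchy--Schwarz, and the leftover $\bigl(\int_{s\leq 2\epsilon}f^{2}\bigr)^{1/2}$ tends to zero. The genuine gap is in your stage 3. You propose to push the comparison principle across $\mathcal{S}$ for $g=\hat\psi f-\hat\phi$ by the same cutoff trick, asserting that codimension four makes the capacity-type errors vanish. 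It does not, for a function controlled only in $X^{2}\subset L^{2}$: testing $-\triangle g-ug\leq 0$ against $\chi_\epsilon^{2}g_{+}$ produces terms like $\int|d\chi_\epsilon|^{2}g_{+}^{2}$, and with $|d\chi_\epsilon|\sim r_i^{-1}$ on the annuli and the Morrey bound $\int_{B_{2r_i}}g_{+}^{2}\lesssim r_i^{\,n-4}$ this is of size $\sum_i r_i^{\,n-6}$, which is \emph{not} controlled by $\mathcal{H}^{n-4}(\mathcal{S})$ and does not go to zero; with only $L^{2}$ control one is left with an uncompensated $\epsilon^{-2}$. So the ``main obstacle'' you name is real, and your proposal does not resolve it.

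The paper's route around this is exactly the step your plan omits, and it is where $\alpha>0$ (coming from the improved Kato inequality) is indispensable in the singular case: multiplying \eqref{E:singularsubinequality} by $f^{-\alpha}$ and using $f^{-\alpha}\bigl(-\triangle f+\alpha\tfrac{|df|^{2}}{f}\bigr)=-\tfrac{1}{1-\alpha}\triangle(f^{1-\alpha})$, one passes to $\overline f=f^{1-\alpha}$, which satisfies the clean inequality $-\triangle\overline f-u\overline f\leq Q_1\overline f$ and, crucially, has the \emph{improved} integrability $\overline g^{\,1/(1-\alpha)}\in X^{2}\subset L^{2}$ for the comparison function $\overline g=\hat\Psi\overline f-\overline\phi$. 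This is precisely the hypothesis $g^{1+\gamma}\in L^{2}$, $\gamma>0$, of the singular maximum principle (Theorem \ref{T:negativegoffsingularset} of Appendix B), whose proof tests with $\Psi_R^{2}g_\epsilon^{\gamma}$ and uses the extra power $\gamma$ to close the cutoff argument; its smallness threshold $\|u\|_{X^{2}}<\tfrac{\gamma}{2\beta}\lambda$ degenerates as $\gamma\to0$, which is another sign that the plain-$L^{2}$ version you are implicitly invoking is not available. The subsequent bootstrap is then carried out for $\overline f$ in the shifted scale of spaces $X^{2(1-\alpha)},X^{3-2\alpha},\dots$ and converted back to $f$ at the end, rather than ``literally'' repeating the smooth proof. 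To repair your argument you would need to insert the substitution $\overline f=f^{1-\alpha}$ (or an equivalent device exploiting $\alpha>0$) before the comparison step, and invoke the Appendix B singular maximum principle rather than a cutoff version of the smooth one.
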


\begin{proof}
We now modify the proof of section \ref{S:boundssmoothcase} to account for the singular Set $\mathcal{S}$, where $\mathcal{S}$ is a closed set of finite $n-4$ Hausdorff dimension.
We observe that inequality \eqref{E:morreyboundonfsingularcase}, though not linear, scales linearly. So we may assume that $\Vert f \Vert_{X^{2} (\Omega_{4} )} =1$, and get bounds on $\Vert f \Vert_{X^{k} (\Omega_{1} )} $ as a constant.

In the body of this proof, we need the following lemma:
\begin{lemma}\label{L:singularweightedreversepoincareestimate}
Suppose $u \geq 0$, $f >0$, with $u,f \in X^{2}(\Omega_{4})$ smooth off of a closed set $\mathcal{S}$ of finite $n-4$ Hausdorff dimension. In addition $\alpha > 0$, and $x \in \Omega_{4} - \mathcal{S}$, we have,
\begin{equation}\label{E:rearrangedsingularsubinequality}
	-\triangle f + \alpha \left( \frac{\vert df \vert }{f} \right)  \leq (u+ Q_{1})f,
\end{equation}
then
$\frac{\vert df \vert }{f^{\frac{1}{2}} } \in X^{2}(\Omega_{3})$, and
\begin{equation}\label{E:singularweightedreversepoincareestimate}
\left \Vert \frac{\vert df \vert }{f^{\frac{1}{2}} } \right\Vert_{X^{2} (\Omega_{3})}^{2}
\leq C_{\alpha} \Vert f \Vert_{X^{2}(\Omega_{4})} 
\left(
1 + \Vert u \Vert_{X^{2}_{\Omega_{4} } }
\right).	
\end{equation}
\end{lemma}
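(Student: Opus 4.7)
I would follow the smooth-case proof of Lemma \ref{L:morreyreversepoincareforsubsolutions} almost verbatim, but insert an additional cutoff $\chi_\epsilon$ vanishing on a neighborhood of $\mathcal{S}$ so that the integration by parts on $-\Delta f$ takes place only where $f$ is smooth, and then send $\epsilon \to 0$. The codimension-four hypothesis enters via a capacity argument: since $\mathcal{H}^{n-4}(\mathcal{S}) < \infty$ and $n-4 < n-2$, the set $\mathcal{S}$ has zero $H^1$-capacity, so such cutoffs with arbitrarily small gradient energy exist.

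Fix $y \in \Omega_3$ and $0 < r < 1$, and let $\Psi_r(x) = \Psi(|x-y|/r)$ be the bump used in the smooth case. For $\epsilon > 0$ cover $\mathcal{S}$ by balls $\{B(z_j,\delta_j)\}$ with each $\delta_j < \epsilon$ and $\sum_j \delta_j^{n-4}$ comparable to $\mathcal{H}^{n-4}(\mathcal{S})$, and take $\chi_\epsilon$ to be a smooth cutoff equal to $0$ on $\bigcup_j B(z_j,\delta_j)$, to $1$ outside $\bigcup_j B(z_j,2\delta_j)$, with $|\nabla \chi_\epsilon| \leq C\delta_j^{-1}$ on each annulus. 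By Cauchy--Schwarz and the defining Morrey bound $\int_{B(z,\rho)} f^2 \leq \rho^{n-4} \Vert f\Vert_{X^2}^2$,
\begin{equation*}
\int \Psi_r |\nabla \chi_\epsilon|^2 f \leq C \sum_j \delta_j^{-2} \int_{B(z_j,2\delta_j)} f \leq C \Bigl(\sum_j \delta_j^{n-4}\Bigr) \Vert f\Vert_{X^2(\Omega_4)},
\end{equation*}
so the new error from cutting off $\mathcal{S}$ is controlled by the $(n-4)$-Hausdorff content of the cover.

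Now multiply \eqref{E:rearrangedsingularsubinequality} by $\Psi_r \chi_\epsilon^2$ and integrate over $\Omega_4 \setminus \mathcal{S}$. Because the test function is compactly supported in the smooth locus of $f$, integration by parts is valid and
\begin{equation*}
-\int \Psi_r \chi_\epsilon^2 \Delta f = \int \chi_\epsilon^2 \nabla \Psi_r \cdot \nabla f + 2\int \Psi_r \chi_\epsilon \nabla \chi_\epsilon \cdot \nabla f.
\end{equation*}
Young's inequality applied to each of the two terms on the right absorbs a small fraction of $\frac{\alpha}{2}\int \Psi_r \chi_\epsilon^2 |df|^2/f$ into the left-hand side. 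The residual errors split into three groups: the standard $\nabla \Psi_r$-piece, bounded by $Cr^{n-4}\Vert f\Vert_{X^2(\Omega_4)}$ exactly as in the smooth case; the $(u+Q_1)f$-piece, bounded by $Cr^{n-4}\Vert f\Vert_{X^2(\Omega_4)}(1+\Vert u\Vert_{X^2(\Omega_4)})$ by the bilinear Morrey inequality; and the new singular error displayed above.

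Finally, let $\epsilon \to 0$. With nested covers, $\chi_\epsilon^2 \nearrow 1$ Lebesgue-almost everywhere on $\Omega_4 \setminus \mathcal{S}$, so monotone convergence gives $\int \Psi_r \chi_\epsilon^2 |df|^2/f \to \int \Psi_r |df|^2/f$; taking supremum over $y \in \Omega_3$ and $r < 1$ then yields \eqref{E:singularweightedreversepoincareestimate}. The main obstacle is arranging that the singular error $\int \Psi_r |\nabla \chi_\epsilon|^2 f$ is compatible with the Morrey scaling $r^{n-4}$ required on the right-hand side: this forces a careful choice of the cover and is precisely where the codimension-four capacity structure is invoked. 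Everything else is a line-by-line repetition of the smooth-case computation.
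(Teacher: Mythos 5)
Your overall strategy (repeat the smooth-case Caccioppoli argument with an extra cutoff that vanishes near $\mathcal{S}$, then remove the cutoff) is exactly the paper's strategy, but the way you estimate the singular error leaves a genuine gap. You bound
$\int \Psi_r |\nabla\chi_\epsilon|^2 f$ by applying the Morrey bound $\int_{B(z_j,2\delta_j)} f^2 \leq \delta_j^{n-4}\Vert f\Vert_{X^2}^2$ \emph{on each covering ball}, which yields $C\bigl(\sum_j \delta_j^{n-4}\bigr)\Vert f\Vert_{X^2(\Omega_4)} \approx C\,\mathcal{H}^{n-4}(\mathcal{S})\Vert f\Vert_{X^2(\Omega_4)}$. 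This error does not tend to zero as $\epsilon\to 0$ (the Hausdorff measure is finite, not small), and it carries no factor $r^{n-4}$, so it cannot be absorbed when you take the supremum defining the $X^2(\Omega_3)$ norm. You acknowledge this as "the main obstacle" and defer it to "a careful choice of the cover," but that is precisely the missing step; note also that zero $H^1$-capacity of $\mathcal{S}$ does not rescue the argument by itself, because the error is weighted by $f$, which is only in $X^2$, not in $L^\infty$.

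The paper closes exactly this gap differently: it uses a single cutoff $\beta_\epsilon = 1-\Psi(s(x)/\epsilon)$ built from Stein's regularized distance $s$ to $\mathcal{S}$ (Definition \ref{D:definitionofs}), with $|d\beta_\epsilon|\leq K\epsilon^{-1}$, $|\triangle\beta_\epsilon|\leq K\epsilon^{-2}$, and estimates the singular error by Cauchy--Schwarz \emph{without} invoking the Morrey norm on the shrinking neighborhood: $\epsilon^{-2}\int_{\{s\leq 2\epsilon\}} f \leq \epsilon^{-2}\bigl(\mathrm{vol}\{s\leq 2\epsilon\}\bigr)^{1/2}\bigl(\int_{\{s\leq 2\epsilon\}} f^2\bigr)^{1/2}$. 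The codimension-four hypothesis enters only through Lemma \ref{L:Hausdorffintegralestimate}, giving $\mathrm{vol}\{s\leq 2\epsilon\}\leq K(\mathcal{S})\epsilon^{4}$, so the $\epsilon^{2}$ cancels the $\epsilon^{-2}$, and the remaining factor $\bigl(\int_{\{s\leq 2\epsilon\}} f^2\bigr)^{1/2}$ tends to zero by absolute continuity of the integral of $f^2\in L^1_{loc}$ over sets shrinking to a null set. Thus the singular error vanishes entirely in the limit, and only the smooth-case terms (with their correct $r^{n-4}$ scaling) survive. If you redo your estimate keeping the factor $\bigl(\int_{N_{2\epsilon}(\mathcal{S})} f^2\bigr)^{1/2}$ (e.g., Cauchy--Schwarz over the index $j$ with a bounded-overlap cover) instead of spending the Morrey bound ball by ball, your argument becomes a cover-language version of the paper's proof; you should also either move both derivatives onto the test function (as the paper does, producing the $\epsilon^{-2}$ and $r^{-2}$ terms) or use squared cutoffs so that the Young-inequality absorption of the $\nabla\Psi_r\cdot\nabla f$ term is legitimate.
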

We prove Lemma \ref{L:singularweightedreversepoincareestimate}.
\begin{proof}
Suppose that the test function $\mu \in C^{\infty}_{0}(\Omega_{4} )$.
We will show that
\begin{equation}\label{E:weaksubeqsingularsetting}
\int (-\triangle \mu ) f \, (dx)^{n} + 
\alpha \int \mu \left( \frac{\vert df \vert^{2}}{f} \right) 
\, (dx)^{n} \leq 
\int (u+ Q_{1}) \mu f \, (dx)^{n}.
\end{equation}
Given equation \eqref{E:weaksubeqsingularsetting}, the proof is exactly the same as that of Lemma \ref{L:morreyreversepoincareforsubsolutions}, if we set $
\Psi_{R} = \mu$.
Let $S \colon \Omega_{4} \rightarrow \mathcal{R}^{+}$ be a regularized distance function (\ref{D:definitionofs}) to the set $\mathcal{S}$.
See Definition \eqref{D:definitionofs} of  Appendix B.
Let $\Psi \colon [0, \infty) \rightarrow  [0, \infty)$ be a $C^{\infty}$
function such that $\Psi$ has bounded derivative and

\begin{equation}\label{E:definitionofPsi}
 \Psi(t) =
\begin{cases}
1, \quad &\text{if } \quad t > 2 \\
0. \quad &\text{if}. \quad t \leq 1
\end{cases}.
\end{equation}

We define:
\begin{equation}\label{E:definitionofbetasubepsilon}
\beta_{\epsilon} = 1- \Psi(\frac{s(x)}{\epsilon}		) =
\begin{cases}
1, \quad &\text{if } \quad s(x) > 2 \epsilon \\
0. \quad &\text{if}. \quad s(x) \leq \epsilon
\end{cases}.
\end{equation}

Note by the usual computation, and by the definition of the regularized distance function, we have:
\begin{gather}
\vert d \beta_{\epsilon} \vert \leq K  \epsilon^{-1} \label{E:derivativeboundonbetasubepsilon} \\
\vert \triangle \beta_{\epsilon} \vert \leq K\epsilon^{-2} \label{E:boundsonlaplacianofbetasubepsilon}.
\end{gather}
Here the size of $K$ is inconsequential for the proof except that it is independent of $\epsilon$.
Then, equation \eqref{E:weaksubeqsingularsetting} is true if we replace $\mu$
by $ \beta_{\epsilon} \mu \in 
 C^{\infty}_{0}
 (\Omega_{4} -\mathcal{S} )$.
 Now we compute:
 \begin{equation}\label{E:boundonlaplacianofbetasubepsilonmu}
 \triangle (\beta_{\epsilon} \mu) =
 \triangle( \mu) \beta_{\epsilon} + 2 grad(\mu) \bullet grad (\beta_{\epsilon} ) + \mu \triangle (\beta_{\epsilon})
 \end{equation}
  and
 \begin{equation}
 \vert 	 2 grad(\mu) \bullet grad (\beta_{\epsilon} ) + \mu \triangle (\beta_{\epsilon}) \vert \leq K(\mu) \epsilon^{-2}
 \end{equation} 
  where $ K(\mu)$ does depend on $\mu$.
 This then implies:
 \begin{align}
 &\int_{\Omega_{4} } \beta_{\epsilon} \left( (\triangle \mu) f + \mu
 \frac{\vert df \vert^{2}}{f^{\frac{1}{2}} } \right) \, (dx)^{n} 
 \leq \\ \notag
 & \int_{\Omega_{4}} \beta_{\epsilon} (u+Q_{1}) f \, (dx)^{n} +
 K(\mu) \epsilon^{-2} \int_{\vert s(x) \vert \leq 2 \epsilon } f \, (dx)^{n}.
 \end{align}
 But,
 \begin{align}
 \int_{\vert s(x) \vert \leq 2 \epsilon}  f
  \, (dx)^{n} 
& \leq 
 \left(
 \int_{\vert s(x) \vert \leq 2 \epsilon}  \vert f \vert^{2}
  \, (dx)^{n}
  \right)^{\frac{1}{2} }
  \left(
  \int_{\vert s(x) \vert \leq 2 \epsilon} 1^{2} \, (dx)^{n}
  \right)^{\frac{1}{2} } \\ \notag
  &  \leq  \left( K(\mathcal{S }) \epsilon^{4} \right)^{\frac{1}{2} }
  \left(
  \int_{\vert s(x) \vert \leq 2 \epsilon}  \vert f \vert^{2}
  \, (dx)^{n}
  \right)^{\frac{1}{2} } .
  \end{align}
  Here the volume of $\{ X \mid s(x) < 2 \epsilon \}$ has been estimated
  in Lemma \eqref{L:Hausdorffintegralestimate} of Appendix B.
  However, since $ lim_{\epsilon \downarrow 0} \left(
  \int_{\vert s(x) \vert \leq 2 \epsilon}  \vert f \vert^{2}
  \, (dx)^{n}
  \right) = 0$, the $\epsilon^{2}$ and the $\epsilon^{-2}$ cancel, and we have our result.
  Note that the constants that depend on $\mathcal{S}$ and $\mu$ disappear in the limit, as they are multiplied by a term that goes to zero.
  This finishes the proof of the lemma.
  \end{proof}

In the remaining part of the proof of Theorem \ref{T:Morreyboundforsolsofellipticineqoffsingset}, we again use $\alpha > 0$, and assume
without loss of generality that
 $\alpha < \frac{1}{2} 	$.
Multiply equation \eqref{E:singularsubinequality} by $f^{-\alpha}$, and use the fact that in $\Omega_{3} -\mathcal{S}$, we have:
\begin{equation}\label{E:alphaweightedlaplaceidentity}
f^{-\alpha} \left(- \triangle f + \alpha \left(\frac{\vert df \vert^{2}}{f} \right) \right) = -d^{*}(f^{\alpha} df)= - \left(\frac{1}{1-\alpha} \right)
(\triangle 
(f^{1-\alpha} ) ).
\end{equation}
Define $\overline{f} = f^{1- \alpha}$, and note that:
\begin{equation}\label{E:relativesizeoffbarandf}
\Vert \overline{f} \Vert_{X^{2(1-\alpha)}(\Omega)} \sim \Vert f \Vert_{X^{2}(\Omega) }
\end{equation}
from equation \eqref{E:powerlaw} of Appendix A.
Now, on $\Omega_{3} - \mathcal{S}$, we have:
\begin{equation}\label{E:sublapacianinequalityforfbar}
-\triangle \overline{f} -u \overline{f} \leq Q_{1} \overline{f},
\end{equation}
for $\overline{f} \in X^{2(1-\alpha)}(\Omega_{3} )$.
Now we proceed with the proof of Theorem \eqref{T:Morreyboundforsolsofellipticineqoffsingset}.

In the proof of Theorem \eqref{T:smoothsubsolutionmorreyestimate}, we also needed an estimate on:
\begin{equation}\label{E:estimateondoverlinef}
d \overline{f} = (1-\alpha) \left(\frac{df}{f^{\frac{1}{2}}} \right) 
\left( f^{-\alpha + \frac{1}{2} } \right) \in X^{3 - 2 \alpha}(\Omega_{3}).
\end{equation}
This follows from the multiplication law and from
$\frac{df}{f^{\frac{1}{2}}} \in X^{2}(\Omega_{3})$, as well as $f^{ \frac{1}{2} - \alpha} \in X^{2(\frac{1}{2} -\alpha)}(\Omega_{3})$.
Here assuming that $\Vert f \Vert_{X^{2}(\Omega_{4}) }= 1$ is invaluable, as we need not carry these powers around.
In carrying out the proof, we obtain that
\begin{equation}\label{E:subequationforphibar}
-\triangle \overline{\phi} - u \overline{\phi}= Q_{1} \hat{\Psi} \bar{f} - 2 d \hat{\Psi}
d \overline{f}- \hat{\Psi}\triangle( \overline{f})
\end{equation}
 can be solved for
$\overline{\phi} \in X^{3-2 \alpha}_{2}(\Omega_{3})$. We have that
\begin{equation}\label{E:laplaciansubeqguationforgbar}
-\triangle( \overline{g} ) - u \overline{g} \leq 0,
\end{equation}
 for $\overline{g} = \hat{\Psi} f - \overline{\phi} $.
Here $\overline{g} \in X^{2(1- \alpha)}
(\Omega_{3})$, 
so the hypotheses of Theorem \ref{T:negativegoffsingularset} of Appendix B are satisfied with $\overline{g}^{1+\gamma}=
 \overline{g}^{\frac{1}{1-\alpha} }\in  X^{2}(\Omega_{3}) \subset L^{2}(\Omega_{3})$.
 It follows that $\overline{g} \leq 0$, and that $\hat{\Psi} \hat{f} \in X^{1-2 \alpha}(\Omega_{3})$.
 In the next step, transfering estimates similarly to the proof of Theorem 8, we get $\overline{\Psi} \overline{\phi} \in X^{\bar{k}}(\Omega_{2})$ for arbitrary $\bar{k}$ with $\Vert u \Vert_{X^{2}(\Omega_{2}) } \leq \eta_{\hat{k}}$.
 But $\overline{f} \in X^{\bar{k}}(\Omega_{1})$ is equivalent to
 $f \in X^{\bar{k} (1-\alpha)}(\Omega_{1})$.
 This completes the proof of Theorem \ref{T:Morreyboundforsolsofellipticineqoffsingset}.
 \end{proof}
 
 The following is a Corollary of the above proof of Theorem \ref{T:Morreyboundforsolsofellipticineqoffsingset}:
 
 \begin{proposition}\label{P:boundonf}
 If $u \leq \lambda f$, and the hypothesis of Theorem 9 of Appendix B are satisfied, then we have a bound on
 $f(x)$ for $ x \in \Omega_{1}$.
 \end{proposition}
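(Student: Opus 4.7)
The aim is to upgrade the Morrey-space control on $f$ granted by Theorem \ref{T:Morreyboundforsolsofellipticineqoffsingset} to a pointwise bound on $\Omega_{1}$, exploiting the extra structural information $u \leq \lambda f$. My plan is to substitute this estimate into the governing inequality and iterate the machinery of the previous theorem, terminating via the maximum principle of Appendix B.

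The first move is to insert $u \leq \lambda f$ into the singular subinequality \eqref{E:singularsubinequality}, obtaining
\[
-\triangle f + \alpha \frac{|df|^{2}}{f} \;\leq\; (\lambda f + Q_{1})\, f
\]
on $\Omega_{4} - \mathcal{S}$. This has exactly the form of \eqref{E:singularsubinequality} with the coefficient $u$ replaced by $\lambda f$. Since $\Vert \lambda f \Vert_{X^{2}(\Omega_{4})} \leq \lambda \Vert f \Vert_{X^{2}(\Omega_{4})}$, the smallness hypothesis required by Theorem \ref{T:Morreyboundforsolsofellipticineqoffsingset} is met (either by the linear scaling remark made during its proof, or as part of the standing assumptions of Theorem 9 of Appendix B). The theorem then yields $f \in X^{k}(\Omega_{1})$ for every $0 < k < 4$, with norm controlled by $\Vert f \Vert_{X^{2}(\Omega_{4})}$, and this regularity passes immediately to $u$ via the pointwise inequality $u \leq \lambda f$.

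The next step is a bootstrap. Replacing the $X^{2}$-information on the coefficient by the stronger Morrey information just obtained, and reapplying Theorem \ref{T:Morreyboundforsolsofellipticineqoffsingset} on the nested cubes $\Omega_{4} \supset \Omega_{3} \supset \Omega_{2} \supset \Omega_{1}$, the Morrey index of $f$ is pushed into the strong regime (small $k$) where the Morrey--Campanato embedding of Appendix A places $f$ in $L^{\infty}$. Equivalently --- and this is how the proposition is framed --- once $f$ has enough Morrey integrability, the hypotheses of Theorem 9 of Appendix B are verified, and the maximum-principle argument of the proof of Theorem \ref{T:Morreyboundforsolsofellipticineqoffsingset} (via a comparison function of the form $\bar g = \hat\psi f - \bar\phi$ satisfying $-\triangle \bar g - u \bar g \leq 0$) returns the pointwise bound $f(x) \leq C$ on $\Omega_{1}$.

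The main technical obstacle is monitoring the Morrey indices through the bootstrap so that the gain per iteration strictly exceeds the quantitative loss incurred by each cutoff multiplication and Morrey--Sobolev embedding (the same $X^{2}(\Omega) \otimes X^{1}(\Omega) \hookrightarrow X^{3}(\Omega)$ and inversion-of-$\triangle$ steps that appear in the proof of Theorem \ref{T:Morreyboundforsolsofellipticineqoffsingset}). Because the substitution $u \mapsto \lambda f$ preserves the linear structure of the coefficient exactly and introduces no new nonlinearity, this bookkeeping reduces to the ladder of inclusions already controlled in the previous theorem, so the iteration terminates in finitely many steps and delivers the pointwise bound.
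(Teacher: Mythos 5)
There is a genuine gap at the final step. Your pointwise bound rests on the claim that once $f \in X^{k}(\Omega_{1})$ for $k$ small, a ``Morrey--Campanato embedding'' places $f$ in $L^{\infty}$. No such embedding is available: the only embedding into bounded (H\"older) functions in Appendix A is $M^{p,q}_{\alpha} \subset C^{\gamma}$ for $\alpha p > n$, i.e. it applies to Morrey--\emph{Sobolev} spaces carrying derivatives, such as $X^{m}_{2}$ with $m<2$, not to $X^{k}$ itself; high Morrey integrability alone never gives boundedness ($\bigcap_{k>0}X^{k}$ contains unbounded functions). Hence iterating Theorem \ref{T:Morreyboundforsolsofellipticineqoffsingset}, whose conclusion is only $f\in X^{k}(\Omega_{1})$, cannot terminate in an $L^{\infty}$ bound, no matter how the indices are bookkept. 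A secondary problem: re-applying that theorem with the coefficient $u$ replaced by $\lambda f$ requires $\Vert \lambda f\Vert_{X^{2}(\Omega_{4})} < \eta_{k}$, which is not among the hypotheses (the smallness assumption concerns $u$, not $f$; the normalization $\Vert f\Vert_{X^{2}(\Omega_{4})}=1$ does not make $\lambda f$ small).

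The paper uses $u\le \lambda f$ at a different, and essential, place. One runs the proof of Theorem \ref{T:Morreyboundforsolsofellipticineqoffsingset} unchanged (with coefficient $u$) up to the comparison $\overline{f}\le\overline{\phi}$ on $\Omega_{2}$, and then performs one more elliptic step for the majorant: from $-\triangle(\overline{\Psi}\,\overline{\phi}) = u\,\overline{\Psi}\,\overline{\phi} + Q_{1}\overline{\Psi}\,\overline{f} - 2\,d\overline{\phi}\,d\overline{\Psi} - (\triangle\overline{\Psi})\,\overline{\phi}$, every right-hand term is in $X^{2-2\alpha}(\Omega_{2})$ except possibly $u\,\overline{\Psi}\,\overline{\phi}$, and this is exactly where the hypothesis enters: $u\le\lambda f\le\lambda\,\overline{f}^{\,1/(1-\alpha)} \in X^{(1-2\alpha)/(1-\alpha)}(\Omega_{2})\subset X^{1}(\Omega_{2})$, so with $\overline{\phi}\in X^{1-2\alpha}(\Omega_{2})$ the multiplication law gives $u\,\overline{\Psi}\,\overline{\phi}\in X^{2-2\alpha}(\Omega_{2})$. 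Inverting the Laplacian then yields $\overline{\Psi}\,\overline{\phi}\in X^{2-2\alpha}_{2}(\Omega_{2})$, and since $2-2\alpha<2$ this two-derivative Morrey space \emph{does} embed in $L^{\infty}(\Omega_{2})$; the bound on $f$ in $\Omega_{1}$ follows from $\overline{f}\le\overline{\phi}\le K$. In short, the role of $u\le\lambda f$ is to break the barrier that $-\triangle-u$ with $u\in X^{2}$ can only be inverted on $X^{2+k}$, $k>0$: it upgrades the zeroth-order term to a subcritical Morrey class so that the comparison function lands strictly below the critical index with two derivatives, which is where the $L^{\infty}$ conclusion actually comes from; your proposal never obtains that two-derivative control.
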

 \begin{proof}
 We follow the proof of Theorem \ref{T:Morreyboundforsolsofellipticineqoffsingset}, until, with $\overline{f} \leq \overline {\phi} \in \Omega_{2}$, we have $\overline{\phi} \in X^{3-2}_{2}(\Omega_{3} )$.
 We have:
 \begin{equation}\label{E:laplacianequationforphibarpsibar}
 -\triangle ( (\overline{\Psi})( \overline{\phi} )) = u ((\overline{\Psi})( \overline{\phi} )) + Q_{1} (\overline{\Psi})(\overline{f})-
 2(d \overline{\phi})(d \overline{\Psi}) - (\triangle(\overline{{\Psi}})) (\overline{\phi}).
 	\end{equation}
  Now, all the terms on the right are in $X^{2-2\alpha}(\Omega_{2})$, except for the term $(u \overline{\Psi})(\overline{\phi})$.
 Here $\overline{\phi} \in X^{1-2\alpha}(\Omega_{2})$ and
 $u \leq \lambda f \leq \lambda (\overline{f})^{\frac{1}{1-\alpha}}
 \in X^{ \frac{1-2 \alpha}{1- \alpha}}(\Omega_{2}) \subset X^{1}(\Omega_{2})$.
 By multiplication, we have $u(\overline{\Psi})(\overline{\phi}) \in 
 X^{2-2 \alpha}(\Omega_{2})$. Hence $(\overline{\Psi})(\overline{\phi}) \in 
 X^{2-2 \alpha}_{2}(\Omega_{2}) \subset L^{\infty}(\Omega_{2})$, and
 $\overline{f}(x) \leq \overline{\phi}(x) \leq K$ in $\Omega_{1}$.
 Keeping track of the explicit dependence of powers of  $\Vert f \Vert_{X^{2}}(\Omega_{4})$ in the final estimate is not straightforward.
  \end{proof}
 	
\section{Application to Yang-Mills-Higgs}\label{S:application to Yang Mills Higgs}
The simpler results of Section \ref{S:boundssmoothcase} apply directly to getting estimates on smooth solutions, and we do not go into that here.

First, we directly apply the results of Section \ref{S:Boundsthesingularcase}
to solutions of a Yang-Mills-Higgs system in a cube $\Omega_{4}= [-4,4]^{n}$.
An immediate corollary, using Appendix C, shows when solutions (with Hausdorff codimension 4 singular set), extend to smooth solutions in the interior of $[-1,1]^{n}$. Later, we show how this applies to the Yang-Mills-Higgs equations in arbitrary domains, and discuss how to verify the hypotheses.
\begin{theorem}\label{T:linfinityboundsonFandDphisingularsetting}
Let $( D_{A},  \Phi )$ be solutions to a Yang-Mills-Higgs system in 
$\Omega_{4} -\mathcal{S} $, where $\mathcal{S}$ is a closed set of finite $n-4$
dimensional Hausdorff measure. Let $ \upsilon = ( F_{A},  D_{A}\Phi )$. Assume that $\upsilon \in X^{2}(\Omega_{4} )$, and that $ \Phi \in L^{\infty}(\Omega_{4})$. 
\begin{equation}\label{E:definitionofQsub1}
Q_{1}  = \underset{[-4,4]^{n}}{sup}
( 2 \vert \Phi \vert + 
 \vert Q_{\Phi,\Phi}(\Phi) ) \vert
\end{equation}
\begin{equation}\label{E:definitionofQsub2}
Q_{2}^{2} = \underset{[-4,4]^{n}}{sup} 
\left(\frac{\vert  Q_{\Phi}(\Phi)\vert^{2}}{Q_{1}}
\right).
\end{equation}
If $F_{A} \in X^{2}(\Omega_{4})$ is sufficently small, then $ \vert \upsilon \vert  \in L^{\infty}(\Omega_{1})$.
We also have the explicit bound 
\begin{equation}\label{E:L2nboundonupsilon}
\Vert \upsilon \Vert_{L^{2n}(\Omega_{1})} \leq
C(Q_{1}) ( \Vert \upsilon \Vert_{X^{2}(\Omega_{4})} + Q^{2}_{2} ).
\end{equation}
\end{theorem}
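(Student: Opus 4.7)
The plan is to combine the Weitzenb\"ock--Kato estimate of Corollary~\ref{C:estimateoflaplacianofnusquared} with the singular elliptic regularity bound of Theorem~\ref{T:Morreyboundforsolsofellipticineqoffsingset}. Setting $f := |\upsilon|^2$, the identity $|df|^2 = 4f\,|d|\upsilon||^2$ converts the Corollary into
\[
-\triangle f + \alpha\,\frac{|df|^2}{f} \;\leq\; 3|F_A|\,f + Q_1 f + 4|Q_\Phi(\Phi)|^2,
\]
with $\alpha = (n+1)/(4n) > 0$, on the regular set $\Omega_4 - \mathcal{S}$. This is almost the hypothesis of Theorem~\ref{T:Morreyboundforsolsofellipticineqoffsingset}, apart from the inhomogeneous term.

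To put it into the required form I absorb the inhomogeneity. By the definition of $Q_2$, $|Q_\Phi(\Phi)|^2 \leq Q_1 Q_2^2$. Setting $\tilde f := f + 4 Q_2^2$, one has $|d\tilde f| = |df|$, $-\triangle\tilde f = -\triangle f$, and $|d\tilde f|^2/\tilde f \leq |df|^2/f$; moreover $4Q_1 Q_2^2 \leq Q_1 \tilde f$ since $\tilde f \geq 4Q_2^2$. Hence
\[
-\triangle \tilde f + \alpha\,\frac{|d\tilde f|^2}{\tilde f} \;\leq\; 3|F_A|\,\tilde f + 2 Q_1 \tilde f
\]
on $\Omega_4 - \mathcal{S}$, which matches the hypothesis of Theorem~\ref{T:Morreyboundforsolsofellipticineqoffsingset} with $u = 3|F_A|$ (whose $X^2$-norm is small by assumption) and the role of ``$Q_1$'' played by $2Q_1$.

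Next I apply that theorem to obtain $\tilde f \in X^k(\Omega_1)$ for every $0 < k < 4$, with $\|\tilde f\|_{X^k(\Omega_1)} \leq \kappa_k \|\tilde f\|_{X^2(\Omega_4)}$; the power law of Appendix~A bounds $\|\tilde f\|_{X^2(\Omega_4)}$ by a constant multiple of $\|\upsilon\|_{X^2(\Omega_4)}^2 + Q_2^2$. Choosing $k = 1$ and using that $X^1$-regularity of $f = |\upsilon|^2$ corresponds to $L^{2n}$-integrability of $\upsilon$ then delivers the bound \eqref{E:L2nboundonupsilon}. For the pointwise conclusion, I take $k$ small enough that the Morrey--Sobolev embedding places $\tilde f$ in $L^\infty(\Omega_1)$, or alternatively invoke Proposition~\ref{P:boundonf} using $u = 3|F_A| \leq 3 f^{1/2}$ to verify the hypothesis $u \leq \lambda f$.

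The main obstacle is the bookkeeping for the inhomogeneous term $4|Q_\Phi(\Phi)|^2$: without the shift to $\tilde f$ one cannot directly apply Theorem~\ref{T:Morreyboundforsolsofellipticineqoffsingset}, which demands a strictly linear right-hand side. The shift preserves all smallness conditions and enables the iteration, but one must track the $Q_1$ and $Q_2^2$ dependencies carefully through the power-law and multiplication estimates of Appendix~A so that the final bound takes the explicit form $C(Q_1)\bigl(\|\upsilon\|_{X^2(\Omega_4)} + Q_2^2\bigr)$.
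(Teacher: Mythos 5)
Your overall strategy---feeding Corollary \ref{C:estimateoflaplacianofnusquared} into Theorem \ref{T:Morreyboundforsolsofellipticineqoffsingset} and Proposition \ref{P:boundonf}---is the right one, but there is a genuine gap in how you absorb the inhomogeneity. You work at the level of $f=\vert\upsilon\vert^{2}$ and shift by a constant, $\tilde f=\vert\upsilon\vert^{2}+4Q_{2}^{2}$. The algebra of the shift is fine, but Theorem \ref{T:Morreyboundforsolsofellipticineqoffsingset} requires its unknown to lie in $X^{2}(\Omega_{4})$, and from the hypothesis $\upsilon\in X^{2}(\Omega_{4})$ the power law \eqref{E:powerlaw} only places $\vert\upsilon\vert^{2}$ in $X^{4}(\Omega_{4})$ (you would need $\upsilon\in X^{1}$ to put $\vert\upsilon\vert^{2}$ in $X^{2}$); so your claim that $\Vert\tilde f\Vert_{X^{2}(\Omega_{4})}$ is controlled by $\Vert\upsilon\Vert_{X^{2}(\Omega_{4})}^{2}+Q_{2}^{2}$ is unjustified and the theorem cannot be applied to $\tilde f$ as stated. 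The same choice spoils the endgame: Proposition \ref{P:boundonf} needs $u\leq\lambda f$, and with your $f$ you only have $u=3\vert F_{A}\vert\leq 3f^{1/2}$, which is not of that form unless $f$ is bounded below; and the fallback ``choose $k$ small so that $X^{k}(\Omega_{1})\subset L^{\infty}$'' is false, since the Morrey--Sobolev embedding into $C^{\gamma}$ (hence $L^{\infty}$) requires two derivatives, which Theorem \ref{T:Morreyboundforsolsofellipticineqoffsingset} does not supply. (A smaller slip: $X^{1}$ has integration power $4$, so $\vert\upsilon\vert^{2}\in X^{1}(\Omega_{1})$ gives $\upsilon\in L^{8}(\Omega_{1})$, not $L^{2n}$.)

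The missing idea, which is exactly what the paper does, is to take the square root: set $f^{2}=\vert\upsilon\vert^{2}+Q_{2}^{2}$, so that the inhomogeneous term is absorbed as $Q_{1}(\vert\upsilon\vert^{2}+Q_{2}^{2})=Q_{1}f^{2}$, and then divide the resulting inequality for $f^{2}$ by $f$, using $f^{-1}d^{*}(f\,df)+\frac{\vert df\vert^{2}}{f}=\triangle f$, to arrive at
\begin{equation*}
-\triangle f+\left(\frac{1}{n}\right)\frac{\vert df\vert^{2}}{f}-uf\leq Q_{1}f,
\qquad u=3\vert F_{A}\vert .
\end{equation*}
This $f$ is comparable to $\vert\upsilon\vert+Q_{2}$, hence lies in $X^{2}(\Omega_{4})$ as Theorem \ref{T:Morreyboundforsolsofellipticineqoffsingset} demands; it satisfies $u=3\vert F_{A}\vert\leq 3f$, so Proposition \ref{P:boundonf} applies and yields $\vert\upsilon\vert\in L^{\infty}(\Omega_{1})$; and applying Theorem \ref{T:Morreyboundforsolsofellipticineqoffsingset} with $\frac{4}{k}=2n$ at the level of $\vert\upsilon\vert$ gives the bound \eqref{E:L2nboundonupsilon} directly.
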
 

\begin{proof}
Using the improved Kato formula, and the Weitzenbock formulae, we have, from
Corollary \eqref{C:estimateoflaplacianofnusquared}
of Theorem 7 see (3.27).
\begin{equation}\label{E:subellipticinequalityforabsoluteupsilon}
-\triangle(\vert \upsilon \vert^{2}) + ( 1 + \frac{1}{n} )
\vert d  \vert \upsilon \vert \vert^{2} \leq
3 \vert F_{A} \vert \vert \upsilon \vert^{2} + Q_{1}(\vert  \upsilon \vert^{2}
+Q^{2}_{2} ).
\end{equation}
Let $  u = 3 \vert F_{A} \vert  $, and $ f^{2} =\vert F_{A} \vert^{2} 
+ Q^{2}_{2}$. Then
\begin{equation}\label{E:ellipticsubequationforfsquared}
-(\frac{1}{2}) \triangle (f^{2}) +(1+\frac{1}{n})( \vert df \vert^{2} \leq
u f^{2} + Q_{1} f^{2}.
\end{equation}
Divide by $f$ and use the fact that
\begin{equation}\label{E:geometricdefinitionoflaplacianoff}
f^{-1} d^{*}(f d f ) + \frac{\vert df \vert^{2}}{f}  = \triangle f
\end{equation}
to get
\begin{equation}\label{E:sublaplacianinequalityforf}
-\triangle f + \left(\frac{1}{n} \right)\left( \frac{\vert df \vert^{2}}{f} \right) -u f \leq Q_{1} f.
\end{equation}
If $u = 3 \vert F_{A} \vert \in X^{2}(\Omega_{4})$
 is sufficently small, we can apply Proposition \ref{P:boundonf} to get a bound on $f \in L^{\infty}(\Omega_{1})$.

To obtain the explicit bound
\begin{equation}\label{E:Ltwobnoundonf}
\Vert f \Vert_{L^{2n}(\Omega_{1})}
\leq C(Q_{1}) \Vert f \Vert_{X^{2}(\Omega_{4})}
\end{equation}
 we apply Theorem \ref{T:Morreyboundforsolsofellipticineqoffsingset}
 with $\frac{4}{k} = 2n$. This gives:
\begin{equation}\label{E:L2nestimateonupsilon}
\Vert \upsilon \Vert_{L^{2n}(\Omega_{1})}
\leq \Vert \upsilon \Vert_{X^{k}(\Omega_{1})}
\leq C(Q_{1}) \left(  \Vert \upsilon \Vert_{X^{2}(\Omega_{4})}
+ Q_{2} \right).
\end{equation}
\end{proof}

Now, we have a regularity result in small balls of $\Omega_{1}$.
 This regularity theorem is proved by combining Theorem
 \ref{T:linfinityboundsonFandDphisingularsetting}
 and Corollary \eqref{C:interiorregularityestimate} from Appendix C. 
\begin{corollary}\label{C:gaugeequivalencetosmoothconnection}
If $ (
 D_{\tilde{A}}
, \Phi 
)
$ 
 satisfy the hypothesis of Theorem \ref{T:linfinityboundsonFandDphisingularsetting}, with $\Vert F_{\tilde{A}} \Vert_{X^{2}(\Omega_{4})}$ sufficently small, then for each point $y \in \Omega_{1}$ there is a neighborhood $B_{y}(\delta) \subset \Omega_{1}$, with $\delta$ chosen 
 such that (6.14) below holds, 
such that, in $B_{y}(\delta) - \mathcal{S}$, $D_{\tilde{A
} }$ is smoothly gauge equivalent to an
exterior covariant derivative $d +A$ (corresponding to a connection 
$\nabla_{A}$). 
If $\upsilon = ( F_{A}, D_{A} \Phi )$, we have:
\begin{equation}\label{E:L2nestimateonupsilon}
\Vert \upsilon \Vert_{L^{2n}(\Omega_{1})} \leq C(Q_{1}) \left(
\Vert \upsilon \Vert_{X^{2}(\Omega_{4}) } + Q_{2} \right)
\end{equation}

\begin{equation}\label{E:coloumbcondition}
d^{*} A = 0
\end{equation}

\begin{equation}\label{E:LpestimateofAintermsofLpestimateofF}
\delta^{-1} \Vert A \Vert_{L^{2n}(B_{y}(\delta))} \leq C
\Vert F_{A} \Vert_{L^{2n}(B_{y}(\delta))}
\leq  C \Vert F_{A} \Vert_{L^{2n}(\Omega_{1})}
\end{equation}

\begin{equation}\label{E:smoothnessofAandphi}
A \, \text{and} \, \,\Phi  \, \text{are smooth on} \,
B_{y}(\delta).
\end{equation}
\end{corollary}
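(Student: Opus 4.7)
My plan is to combine three ingredients: the $L^{\infty}$ and $L^{2n}$ bounds from Theorem \ref{T:linfinityboundsonFandDphisingularsetting}, the Coulomb gauge construction of Appendix C for singular connections with small $X^2$ curvature, and a standard elliptic bootstrap carried out in the resulting Coulomb gauge.

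First, Theorem \ref{T:linfinityboundsonFandDphisingularsetting} applied to the standing hypotheses immediately furnishes $|\upsilon| \in L^{\infty}(\Omega_1)$ together with the bound \eqref{E:L2nboundonupsilon}, which is exactly the estimate \eqref{E:L2nestimateonupsilon} asserted in the corollary. In particular $F_{\tilde A} \in L^{2n}(\Omega_1)$, and the pointwise bound on $|\upsilon|$ controls $|F_{\tilde A}|$ almost everywhere on $\Omega_1 - \mathcal{S}$.

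Next, fix $y \in \Omega_1$. Since $\|F_{\tilde A}\|_{X^2(\Omega_4)}$ is small by assumption and $\mathcal{S}$ has finite $(n-4)$-Hausdorff measure, the monotonicity formula \eqref{E:secondmonotonicityformula} propagates this smallness to all subballs of $\Omega_1$, so on a sufficiently small $B_y(\delta) \subset \Omega_1$ the hypotheses of the Appendix C Coulomb gauge construction are satisfied. I would invoke that result to produce a gauge transformation $g$ on $B_y(\delta) - \mathcal{S}$ taking $\tilde A$ to $A = g^{-1} \tilde A g + g^{-1} dg$, with $A$ extending across $\mathcal{S}$, satisfying $d^* A = 0$ (yielding \eqref{E:coloumbcondition}), and obeying the scale-invariant estimate $\delta^{-1}\|A\|_{L^{2n}(B_y(\delta))} \leq C \|F_A\|_{L^{2n}(B_y(\delta))}$. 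Combined with the global $L^{2n}$ bound on $\upsilon$ already established, this gives \eqref{E:LpestimateofAintermsofLpestimateofF}, whose trailing inequality is simply monotonicity of integration in the inclusion $B_y(\delta) \subset \Omega_1$.

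Finally, with $A$ in Coulomb gauge and both $F_A$ and $\Phi$ controlled in $L^{\infty}$ on $B_y(\delta)$, I would bootstrap to smoothness. The Coulomb condition combined with $F_A = dA + A \wedge A$ gives the elliptic system $-\Delta A = d^* F_A - d^*(A \wedge A)$; substituting the first field equation \eqref{E:firsteulerlagrangeeq} rewrites $d^* F_A$ as a lower-order expression in $\Phi$ and $D_A \Phi$, while the second field equation \eqref{E:secondeulerlagrangeeq} is a semilinear elliptic equation for $\Phi$ with coefficients depending polynomially on $A$, $\Phi$, and derivatives of $Q$. Starting from $A \in L^{2n}$ together with the pointwise bounds on $\Phi$ coming from Theorem \ref{T:secondpointwiseboundonphi}, iterated Calderon-Zygmund estimates for $-\Delta$ upgrade integrability of $A$ and $D_A \Phi$ through subcritical exponents, and once continuous coefficients are reached, Schauder estimates deliver the $C^{\infty}$ regularity claimed in \eqref{E:smoothnessofAandphi}. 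The main obstacle I anticipate is confirming that the gauge produced by Appendix C is regular enough across $\mathcal{S}$ to justify global integration by parts on $B_y(\delta)$ rather than only on $B_y(\delta) - \mathcal{S}$; this is where the codimension-four structure of $\mathcal{S}$ and a capacity argument of the kind used in Appendix B are essential. Once this removable-singularities step is in hand, the remaining subcritical bootstrap is routine.
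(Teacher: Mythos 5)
Your proposal is correct and follows essentially the same route as the paper: quote Theorem \ref{T:linfinityboundsonFandDphisingularsetting} for \eqref{E:L2nestimateonupsilon}, rescale a small ball so the Appendix C Coulomb gauge theorem applies (giving \eqref{E:coloumbcondition} and the scaled estimate \eqref{E:LpestimateofAintermsofLpestimateofF}), then combine the Coulomb condition with the Euler--Lagrange equations into an elliptic system for $A$ and $\Phi$ and bootstrap to smoothness. The only cosmetic differences are that the paper gets the required smallness on $B_{y}(\delta)$ by fixing $\delta$ so that the rescaled $L^{2n}$ curvature norm is below the Appendix C threshold rather than by invoking monotonicity, and the extension of the gauge transformation across $\mathcal{S}$ that you flag as the delicate step is already supplied at the end of Appendix C via the codimension-at-least-two argument.
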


\begin{proof}
Condition \ref{E:L2nestimateonupsilon} is the conclusion of Theorem \ref{T:linfinityboundsonFandDphisingularsetting}.
In Theorem \ref{T:coulombgauge}
 Appendix C 
the local trivialization in which the Coulomb condition of \ref{E:coloumbcondition} holds is constructed.
Let $\epsilon >0$.
The size of the ball $B_{y}(\delta)$ is fixed so that
\begin{equation}\label{E:numericalboundontheL2nnormofFinadeltaball}
\Vert F_{\tilde{A}} \Vert_{L^{2n}(B_{y}(\delta) )} \leq
\Vert F_{\tilde{A}} \Vert_{L^{2n}(\Omega_{1})}  \leq
\delta^{-3} \epsilon.
\end{equation}
If we rescale to a unit ball we have the $L^{2n}$ norm of $F$ bounded by 
$\epsilon$ and we can apply theorem
\ref{T:coulombgauge} 
of Appendix C.
Then, equation \ref{E:coloumbcondition} is valid.
The inequality \ref{E:LpestimateofAintermsofLpestimateofF} is the rescaled version of the estimate in 
Corollary \ref{C:interiorregularityestimate}
Appendix C.

Now collect the information that we have from Appendix C, and the Euler-Lagrange equations to get the following identities and equations:
\begin{equation}\label{E:covariantexteriorderivativeofA}
d^{*}A=0	
\end{equation}\label{E:definitionofF}

\begin{equation}
dA + \left(\frac{1}{2}	\right)[A,A] =F_{A}	
\end{equation}

\begin{equation}\label{E:equationfordstarofF}
d^{*} F_{A} +[A, F_{A} ]=
[\Phi, [ d+A,\Phi]]
\end{equation}

\begin{equation}\label{E:equationforDPhi}
D\Phi = [d+A, \Phi ]
\end{equation}

\begin{equation}\label{E:equationforDstarofdPhi}
D^{*} \left( d \Phi \right) + [A, d \Phi] = Q_{\Phi}(\Phi).
\end{equation}

Rearrange the above equations so that:
\begin{equation}\label{E:laplacianofAequation}
\triangle A = (d^{*}d + d d^{*} )A =
L_{1}(A, dA, \Phi, d \Phi )
\end{equation}
and

\begin{equation}\label{E:laplacianofPhiequation}
\triangle \Phi = d^{*} d \Phi =L_{2}(A, dA, \Phi, d \Phi ).
\end{equation}
 Here the powers of $\{A, dA, \Phi, d \Phi \} $ in the expressions
$L_{1}$ and $L_{2}$
 are under control and 				
$Q$ is a smooth function of $\Phi$, which -- to begin with-- is in $L^{\infty}$. Standard bootstrap arguments then yield smoothness in the interior.
\end{proof}

To see how Theorem \ref{T:linfinityboundsonFandDphisingularsetting}
applies in a general domain, assume $( D_{\tilde{A}}, \Phi )$ satisfies a Yang-Mills-Higgs system in $\Omega - \mathcal{S}$, where $\mathcal{S}$ is a closed set of finite $n-4$ dimensional Hausdorff measure.
For a fixed $y \in \mathcal{R}^{n}$,
let $\Omega_{y, \delta} = \{ x \colon  x-y  \in [ -\delta, +\delta ]^{n} \  
\}  $.
Then, Theorem \ref{T:linfinityboundsonFandDphisingularsetting} translates into:

\begin{corollary}\label{C:scaledandtranslatedgaugeequivalentregularitytheorem}
Assume $( D_{\tilde{A}}, \Phi )$ satisfies a Yang-Mills-Higgs system in $\Omega - \mathcal{S}$, where $\mathcal{S}$ is a closed set of finite $n-4$ dimensional Hausdorff measure.
Let $\upsilon = ( F_{A}, D_{A} \Phi )$.
 
\begin{equation}\label{E:defintionofmodifiedQ1}
Q_{1} = \underset{\Omega}{sup} \left( \vert  Q(\Phi) \vert^{2} +\ \vert  Q_{\Phi,\Phi}(\Phi) \vert \right)
\end{equation}

\begin{equation}\label{E:definitionofmodifiedQ2}
Q_{2}^{2} =  \left( \underset{\Omega}{sup} \vert Q_{\Phi}(\Phi) \vert^{2}
	\right) Q_{1}
\end{equation}
Suppose $\upsilon \in X^{2}(\Omega)$, and $Q_{1} \delta^{2}$ as well as
$Q_{2} \delta^{2}$ (scales like the two form $\upsilon$) are bounded by a fixed constant. In addition suppose that $ F_{\tilde{A}} \in X^{2}(\Omega)$ has small enough $X^{2}(\Omega)$ norm (independent of the other constants).
If $\Omega_{y,\delta} \subset \Omega$, then $\delta^{2} \upsilon \in L^{\infty}(\Omega_{y, \delta})$ is bounded above by a constant, and $( D_{\tilde{A}}, \Phi )$
and $
(D_{\tilde{A}}, \Phi )$
are smoothly gauge equivalent on 
$\Omega_{y,\delta}- \mathcal{S} $
 to a smooth exterior covariant differential (corresponding to a smooth connection), and a smooth Higgs Field on $\Omega_{y,\delta}$, both of which extend smoothly across the singular set $ \mathcal{S} \cap \Omega_{y,\delta} $.
\end{corollary}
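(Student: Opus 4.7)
The plan is to reduce Corollary \ref{C:scaledandtranslatedgaugeequivalentregularitytheorem} to Theorem \ref{T:linfinityboundsonFandDphisingularsetting} together with Corollary \ref{C:gaugeequivalencetosmoothconnection} by translation and a blow-up of $\Omega_{y,\delta}$ onto the standard cube $\Omega_4 = [-4,4]^n$. Set $y=0$ by translation invariance, and introduce the new coordinate $z = (4/\delta) x$. Under this change of scale, the scaling rules from Section \ref{S:ScalingandmaxPHI} give the rescaled connection $\hat A = (\delta/4) \tilde A$, the rescaled Higgs field $\hat \Phi = (\delta/4)\Phi$, and the rescaled potential $\hat Q(\hat \Phi) = (4/\delta)^{4} Q((\delta/4)\hat\Phi)$. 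These choices are exactly what makes the gauge-invariant integrand in \eqref{E:eulerlagrange} (and hence the Euler--Lagrange system \eqref{E:eulerlagrangeeqs}) transform covariantly, so that $(\hat A, \hat \Phi)$ is again a Yang--Mills--Higgs solution, now on $\Omega_4$ minus the rescaled singular set $\hat{\mathcal{S}}$, whose Hausdorff $(n-4)$-measure is finite by the homogeneity of the $(n-4)$-Hausdorff measure.

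Next I would track what each hypothesis of Theorem \ref{T:linfinityboundsonFandDphisingularsetting} becomes. Because $F$ is a $2$-form and $D_A\Phi$ is a $1$-form coupled with a factor-$1$ rescaling of $\Phi$, both components of $\upsilon = (F_A, D_A\Phi)$ pick up a uniform factor $(\delta/4)^2$. The Morrey norm $X^2$ (which scales like $n/2$, so like a $2$-form squared) is invariant under such isotropic blowups, so $\|\hat F_{\hat A}\|_{X^2(\Omega_4)} = \|F_{\tilde A}\|_{X^2(\Omega_{y,\delta})} \le \|F_{\tilde A}\|_{X^2(\Omega)}$, and similarly for $\upsilon$; thus the smallness assumption on $F_{\tilde A}$ in $X^2$ is preserved. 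The auxiliary quantities $\hat Q_1$ and $\hat Q_2$ defined on $[-4,4]^n$ from $\hat \Phi$ and $\hat Q$ satisfy $\hat Q_1 = (\delta/4)^2\, Q_1$ and $\hat Q_2 = (\delta/4)^2\, Q_2$ (this is exactly the phrase ``scales like the two form $\upsilon$''), so the assumed bounds on $Q_1 \delta^2$ and $Q_2 \delta^2$ translate into uniform bounds on $\hat Q_1$ and $\hat Q_2$ independent of $\delta$.

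With all hypotheses of Theorem \ref{T:linfinityboundsonFandDphisingularsetting} verified in the rescaled picture, I obtain $\hat \upsilon \in L^\infty(\Omega_1)$ with the explicit bound $\|\hat \upsilon\|_{L^{2n}(\Omega_1)} \le C(\hat Q_1)(\|\hat\upsilon\|_{X^2(\Omega_4)} + \hat Q_2^2)$. Pulling back by the inverse rescaling converts the pointwise $L^\infty$ bound on $\hat\upsilon$ over $\Omega_1 = [-1,1]^n$ into the pointwise bound $\delta^2 \upsilon \in L^\infty(\Omega_{y,\delta/4})$, after which a covering argument in $y$ recovers the claim on $\Omega_{y,\delta}$ (possibly with a slight shrinkage that is absorbed into a uniform constant). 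Finally, an invocation of Corollary \ref{C:gaugeequivalencetosmoothconnection} on the rescaled data produces the Coulomb gauge inside each small ball, promotes $\hat A$ and $\hat \Phi$ to smooth objects by the bootstrap described there, and the smoothness property is invariant under the change of variables; thus $(D_A,\Phi)$ is gauge equivalent to a smooth exterior covariant differential and a smooth Higgs field on $\Omega_{y,\delta}$.

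The main obstacle I expect is purely bookkeeping: making sure that every scaling exponent is correct, in particular the $\hat Q_1 = (\delta/4)^2 Q_1$ and $\hat Q_2 = (\delta/4)^2 Q_2$ computation, since the definitions of $Q_1,Q_2$ in \eqref{E:defintionofmodifiedQ1}--\eqref{E:definitionofmodifiedQ2} mix first and second derivatives of $Q(\Phi)$ that scale differently, and the rescaling factors must combine so that the two natural quantities $Q_i \delta^2$ are exactly the ones that appear (and are assumed bounded) in the final Morrey/Kato estimate driving Proposition \ref{P:boundonf}. Once those exponents are verified, the rest is a routine transcription of Theorem \ref{T:linfinityboundsonFandDphisingularsetting} and Corollary \ref{C:gaugeequivalencetosmoothconnection} into the rescaled coordinates.
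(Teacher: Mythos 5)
Your proposal is correct and is essentially the paper's own argument: translate and rescale $\Omega_{y,\delta}$ to the standard cube, use the scale-invariance of the $X^{2}$ norm on quantities that scale like two-forms (such as $F_{\tilde A}$ and $D_{\tilde A}\Phi$), observe that the hypotheses $Q_{1}\delta^{2}$, $Q_{2}\delta^{2}$ bounded become uniform bounds on the rescaled constants, and then invoke Theorem \ref{T:linfinityboundsonFandDphisingularsetting} together with Corollary \ref{C:gaugeequivalencetosmoothconnection}. The paper states this in two sentences as "a restatement of Theorem \ref{T:linfinityboundsonFandDphisingularsetting} for cubes of arbitrary size"; you simply carry out the exponent bookkeeping (and the minor $\Omega_{1}$ versus $\Omega_{y,\delta}$ covering point) more explicitly.
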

\begin{proof}
In rescaling to $\tilde{x} = \frac{x-y}{\delta}$, the constants rescale as already described. Note that $X^{2}$, and the ( $X^{2}$ norm ) are invariant,
when applied to a geometric quantity that scales like a two form. Some examples
of such a quantity are $F_{\tilde{A}}$ and $D_{\tilde{A}} \Phi$. Thus Corollary \ref{C:scaledandtranslatedgaugeequivalentregularitytheorem} is a restatement of Theorem \ref{T:linfinityboundsonFandDphisingularsetting} for cubes of arbitrary size.
\end{proof}

We indicated in Section \ref{S:ScalingandmaxPHI} how a bound on the maximum
of norm $\Phi$ can be obtained. This leads to bounds on the terms $Q_{1}$ and
$Q_{2}$.

We emphasize that there are important examples where $\Phi \in L^{2}(\Omega)$
cannot be bounded. Our theory only applies when this bound is available.

The same can be said for the bound on $\upsilon = ( F_{A}, D_{A} \Phi) \in X^{2}(\Omega)$. Bounds on $\upsilon = (F_{A}, D_{A} \Phi) \in X^{2}(\Omega)$
are available when it is a stationary solution of a Yang-Mills-Higgs system.

In many cases, limits of smooth solutions approach smooth solutions (in an appropriate topology) in $\Omega - \mathcal{S}$ (where $\mathcal{S}$ is a closed set of finite $n-4$ dimensional Hausdorff measure), but it is not clear that these limits are stable with respect to perturbation by smooth
diffeomorphisms unless they fix the singular set $\mathcal{S}$.

We only have the following:
\begin{lemma}\label{L:X2normboundsonstablesoltions}
Suppose that $( D_{A},  \Phi )$ is a smooth solution of a Yang-Mills-Higgs system,
with $Q>0$ on $\Omega - \mathcal{S}$, where $\mathcal{S}$ is a closed set of zero
$n$ dimensional Lebesque measure. Suppose, in addition that $  ( D_{A},  \Phi )$
is a stationary critical point of the Yang-Mills-Higgs Functional  $\mathcal{A} (D_{A},\Phi )$,
 with respect to all smooth diffeomorphisms that fix the boundary of
$ \Omega$. Let $\Omega_{y, \delta} = \{ x \mid \vert x-y \vert < \delta \}$,
 assume that $dist ( \Omega_{y, \delta}, \mathcal{R}^{n} - \Omega) \geq R \geq \delta$ as well as
 \begin{equation}\label{E:Energyboundbyssquared}
 \int_{\Omega} \vert F_{A} \vert^{2} + \vert D_{A} \Phi \vert^{2} + Q(\Phi)
 \, (dx)^{n} \leq S^{2}.
 \end{equation}
  Then $ (F_{A}, D_{A} \Phi) \in X^{2}(\Omega_{y, \delta} ) $, and 
 \begin{equation}\label{E:X2normboundonupsilon}
 \Vert (F_{A}, D_{A} \Phi) \Vert_{X^{2}(\Omega_{y, \delta} )} \leq 
 R^{\frac{n-4}{2}} S.
 \end{equation}
\end{lemma}

\begin{proof}
By the Monotonicity Theorem \ref{T:monotonicitytheorem} for $x \in \Omega_{y, \delta}$ we have:
\begin{align}\label{E:monotonicitybounds}
\rho^{n-4	}
\int_{B_{x}(\rho) } \, 
\vert F_{A} \vert^{2} + \vert D_{A} \Phi \vert^{2} + Q(\Phi) \, (dx)^{n}\\
\leq 
R^{n-4	}
\int_{B_{x}(R) } \, 
\vert F_{A} \vert^{2} + \vert D_{A} \Phi \vert^{2} + Q(\Phi) \, (dx)^{n}
\leq R^{n-4} S^{2}.
\end{align}
If we use the norm $\Vert \bullet 
\Vert^{''}_{X^{2} (\Omega_{\delta})}$ defined in equation \eqref{E:doubleprimenormoff}
of Appendix A, we obtain the required estimate.
\end{proof}	

Our final regularity theorem is a corollary of 
Lemma \ref{L:X2normboundsonstablesoltions}, and of
Corollary \ref{C:scaledandtranslatedgaugeequivalentregularitytheorem}.

\begin{theorem}\label{T:finalregularitytheorem}
Assume the hypothesis of Lemma \ref {L:X2normboundsonstablesoltions} (with $A$ replaced by $\tilde{A}$ and the hypothesis of Corollary
\ref{C:scaledandtranslatedgaugeequivalentregularitytheorem} except for the small $X^{2}(\Omega)$ norm condition on $F_{\tilde{A}}$ in Corollary
\ref{C:scaledandtranslatedgaugeequivalentregularitytheorem}.
Suppose $ \Omega_{0} \in \Omega$ with
$ dist(\Omega_{0}, \mathcal{R}^{n}) =R
$
where $R-S< \epsilon$.
If $ \epsilon$ is positive and sufficently small and if
$S^{2}$ in the hypothesis of
Lemma \ref {L:X2normboundsonstablesoltions}
 is sufficently small, then 
 $(D_{\tilde{A}},\Phi) $ are smoothly gauge equivalent to a  smooth  exterior differential (corresponding to a smooth connection) and a smooth Higgs Field on $\Omega_{0}-\mathcal{S}
 $, and both extend smoothly across the Singular 
 Set $ \mathcal{S} \cap \Omega_{0}$. 
\end{theorem}
\begin{proof}
Using Lemma \ref {L:X2normboundsonstablesoltions},
we obtain, when $S^{2}$ is sufficently small that we can apply Theorem \ref{T:linfinityboundsonFandDphisingularsetting} in its equivalent form of Corollary \ref{C:scaledandtranslatedgaugeequivalentregularitytheorem}.
	\end{proof}

\appendix[ {\bfseries Appendix A Morrey Spaces}]\label{A:morreyspaces}\newline

The function spaces which arise naturally from the monotonicity formulae are 
Morrey Spaces. We outline a few key properties of these spaces. We follow the discussion and notation of [10]
, which is useful for geometers.
\begin{definition}\label{D:morreyspaces}
Let $p \geq q >1$.
The Morrey Space $M^{p,q}$ is the space of measurable functions on $\mathcal{R}^{n}$, with finite $M^{p,q}$ norm, where the Morrey norm $M^{p,q}$ is defined by
\begin{equation}\label{E:definitionofmorreyspaces}
\Vert f \Vert_{M^{p,q}}= max_{{\substack{y \in \mathcal{R}^{n} \\ r > 0} } }
(r^{ n (\frac{1}{p} - \frac{1}{q}	) } )
\left( \int_{\vert x-y\vert \leq r} \vert f \vert^{q} (dx)^{n}
\right)^{\frac{1}{q} }
\end{equation}
\end{definition}
Here $p$ is the scaling power, $q$ is the power of integration, and $L^{p} \subset M^{p,q}$. These are the same spaces as defined by Adams [1], and there denoted by $L^{q, \lambda}$. In our notation, we have 
$L^{q, \lambda} =  M^{p,q}$, where $\lambda = \frac{nq}{p}$. We use the spaces
$M^{\frac{n}{k},\frac{4}{k}  }= 
L^{\frac{4}{k},4 }
 =X^{k} $.

The Morrey--Sobolev spaces are spaces of functions $M^{p,q}_ {\alpha}$, with $\alpha$ derivatives in $M^{p,q}$.
Our two basic facts are:
\begin{theorem}\label{T:multiplicationin morreyspaces}
The map $(f,g) \rightarrow fg$,
( where $f \in M^{p,q}$, 
and $g \in M^{p',q'}_{q'}$) has the property that 
\begin{equation}\label{E:morreymultiplicationmap}
M^{p,q} \bigotimes M^{p',q'} \rightarrow M^{p'',q''} \quad \text{for}  \quad  \frac{1}{p} +
\frac{1}{p'} = \frac{1}{p''} \quad \text{and} \quad \frac{1}{q} + \frac{1}{q'} =
\frac{1}{q''} 
\end{equation}
This specializes to $ X^{k} \bigotimes  X^{k'} \rightarrow X^{k+k' }$.
\end{theorem}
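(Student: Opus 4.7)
The plan is a direct Hölder inequality on each ball followed by verification that the Morrey scaling exponents are additive. Fix $y \in \mathcal{R}^n$ and $r>0$. The hypothesis $\frac{1}{q}+\frac{1}{q'}=\frac{1}{q''}$ is exactly the statement that $q/q''$ and $q'/q''$ are Hölder conjugate exponents, so applied to $\lvert fg\rvert^{q''}=\lvert f\rvert^{q''}\cdot\lvert g\rvert^{q''}$ on $B_r(y)$, Hölder gives
\begin{equation}
\left(\int_{\lvert x-y\rvert\leq r}\lvert fg\rvert^{q''}(dx)^{n}\right)^{1/q''} \leq \left(\int_{\lvert x-y\rvert\leq r}\lvert f\rvert^{q}(dx)^{n}\right)^{1/q}\left(\int_{\lvert x-y\rvert\leq r}\lvert g\rvert^{q'}(dx)^{n}\right)^{1/q'}.
\end{equation}

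The next step is to multiply both sides by the Morrey weight $r^{n(1/p''-1/q'')}$ and split the weight using the additivity of the scaling exponents. Indeed, combining $\frac{1}{p}+\frac{1}{p'}=\frac{1}{p''}$ with $\frac{1}{q}+\frac{1}{q'}=\frac{1}{q''}$ yields the identity
\begin{equation}
n\left(\frac{1}{p''}-\frac{1}{q''}\right) = n\left(\frac{1}{p}-\frac{1}{q}\right) + n\left(\frac{1}{p'}-\frac{1}{q'}\right),
\end{equation}
which lets me distribute the prefactor $r^{n(1/p''-1/q'')}$ between the two factors on the right-hand side, each piece being bounded by the corresponding Morrey norm from Definition \ref{D:morreyspaces}. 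Taking the supremum over $y$ and $r$ then produces $\Vert fg\Vert_{M^{p'',q''}}\leq \Vert f\Vert_{M^{p,q}}\Vert g\Vert_{M^{p',q'}}$, which is exactly the asserted bilinear estimate.

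For the specialization to the $X^{k}$ scale, I would simply substitute $p=n/k$, $q=4/k$ and $p'=n/k'$, $q'=4/k'$. Then $\frac{1}{p}+\frac{1}{p'}=\frac{k+k'}{n}$ and $\frac{1}{q}+\frac{1}{q'}=\frac{k+k'}{4}$, so $p''=n/(k+k')$ and $q''=4/(k+k')$, giving $M^{p'',q''}=X^{k+k'}$.

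There is no genuine obstacle here; the entire argument is bookkeeping. The one place that could conceivably go wrong is confirming that $p''\geq q''>1$ so that $X^{k+k'}$ is a bona fide Morrey space in the regime of interest, but this is automatic since $\frac{1}{p}\leq\frac{1}{q}$ and $\frac{1}{p'}\leq\frac{1}{q'}$ sum to $\frac{1}{p''}\leq\frac{1}{q''}$, and the exponents arising in the applications of this paper all satisfy $k+k'\leq 4$.
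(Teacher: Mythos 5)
Your proof is correct and follows the same route as the paper, which disposes of this theorem with the single remark that it is ``a simple application of Holder's inequality'': you apply H\"older on each ball with the conjugate exponents $q/q''$ and $q'/q''$, split the weight $r^{n(1/p''-1/q'')}$ using the additivity of the scaling exponents, and take suprema. The specialization to $X^{k}\otimes X^{k'}\rightarrow X^{k+k'}$ by substituting $p=n/k$, $q=4/k$ is likewise exactly the bookkeeping the paper intends.
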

\begin{proof}
The proof is a simple application of Holder's inequality.
\end{proof}
\begin{theorem}\label{T:morreysobolevembeddingtheorem}
\begin{equation}\label{E:morreysobolevembeddingtheorem}
M^{p,q}_{\alpha} \subset M^{p',q'} \quad \text{ for} \quad n > \alpha p \quad \text{, where} \quad q \geq 1 \quad and  \quad \frac{1}{p'} = \frac{1}{p} - \frac{\alpha}{n} \quad 
\text{ and} \quad 
\frac{p}{p'} = \frac{q}{q'}
\end{equation}
\begin{equation}\label{E:specializtionofmorreysobolevembeddingtheorem}
X^{k}_{\alpha} \subset X^{k - \alpha } \quad \text{ for} \quad \alpha < k 
\end{equation}
and
\begin{equation}\label{E:morreyholderembeddingtheorem}
M^{p,q}_{\alpha} \subset C^{\gamma} \quad \text{ for} \quad \alpha p > k
\end{equation}
\end{theorem}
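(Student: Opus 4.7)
The strategy is to reduce all three embeddings to mapping properties of the Riesz potential $I_\alpha f(x) = \int f(y)\vert x-y\vert^{\alpha - n}\,dy$, exploiting the fact that any $u \in M^{p,q}_\alpha$ can (locally, modulo a smooth remainder) be written as $u = I_\alpha g$ with $g \in M^{p,q}$ and $\Vert g \Vert_{M^{p,q}} \leq C \Vert u \Vert_{M^{p,q}_\alpha}$ via the Bessel potential calculus. Thus it suffices to show (a) $I_\alpha \colon M^{p,q} \to M^{p',q'}$ boundedly when $\alpha p < n$, with $\frac{1}{p'} = \frac{1}{p} - \frac{\alpha}{n}$ and $p/p' = q/q'$, and (b) $I_\alpha f$ is H\"older continuous when $\alpha p > n$.

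For (a) I would use Hedberg's splitting trick. Fix $x$ and $R>0$ and write
\begin{equation*}
I_\alpha f(x) = \int_{\vert x-y\vert \leq R} \frac{f(y)}{\vert x-y\vert^{n-\alpha}}\,dy + \int_{\vert x-y\vert > R} \frac{f(y)}{\vert x-y\vert^{n-\alpha}}\,dy.
\end{equation*}
A dyadic decomposition of the inner integral into annuli $2^{-k-1}R < \vert x-y\vert \leq 2^{-k}R$ and the elementary bound on each annulus gives the near-part estimate $C R^{\alpha} M f(x)$, where $M$ is the Hardy--Littlewood maximal function. The same dyadic decomposition of the outer integral into shells $2^k R < \vert x-y\vert \leq 2^{k+1}R$, together with H\"older's inequality on each shell and the defining Morrey bound $\int_{B_r}\vert f\vert^q \leq r^{n(1 - q/p)}\Vert f\Vert_{M^{p,q}}^q$, yields a geometric series which converges because $\alpha < n/p$, giving far-part bound $C R^{\alpha - n/p}\Vert f\Vert_{M^{p,q}}$. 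Optimizing in $R$ produces the Hedberg-type pointwise inequality
\begin{equation*}
\vert I_\alpha f(x)\vert \leq C\,(Mf(x))^{p/p'}\,\Vert f\Vert_{M^{p,q}}^{1-p/p'}.
\end{equation*}

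Integrating the $q'$-th power of this inequality over an arbitrary ball $B_r$ and using the hypothesis $p/p' = q/q'$ (so that $q'\cdot p/p' = q$), one obtains
\begin{equation*}
\int_{B_r} \vert I_\alpha f\vert^{q'} \leq C\,\Vert f\Vert_{M^{p,q}}^{q'(1 - p/p')} \int_{B_r}(Mf)^{q}.
\end{equation*}
Applying the boundedness of $M$ on $M^{p,q}$ (valid for $q>1$) controls the last integral by $C\Vert f\Vert_{M^{p,q}}^{q} r^{n(1 - q/p)}$, and a direct calculation shows that $r^{n(1/p' - 1/q')}(\int_{B_r}\vert I_\alpha f\vert^{q'})^{1/q'} \leq C\Vert f\Vert_{M^{p,q}}$, which is exactly the required Morrey norm bound. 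The $X$-space specialization follows by plugging in $p = n/k$, $q = 4/k$, $p' = n/(k-\alpha)$, $q' = 4/(k-\alpha)$ and checking that $p/p' = (k-\alpha)/k = q/q'$. For the H\"older embedding, when $\alpha p > n$ the near-part estimate is uniformly bounded in $x$, and to estimate $\vert I_\alpha f(x) - I_\alpha f(x')\vert$ with $\vert x-x'\vert = \rho$ one splits the integration domain into $B_{2\rho}(x)$ and its complement, treating each piece with the same dyadic/H\"older procedure; the resulting bound is $C\rho^{\alpha - n/p}\Vert f\Vert_{M^{p,q}}$, giving H\"older exponent $\gamma = \alpha - n/p$ (so the condition in the statement should read $\alpha p > n$ rather than $\alpha p > k$).

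The main obstacle is the maximal function bound on $M^{p,q}$, which genuinely requires $q>1$ and is not in the introductory reference we quote for multiplication and embedding; the borderline $q=1$ case would need a separate weak-type argument. Fortunately this paper only uses the ranges $q = 4/k$ with $k<4$, so $q > 1$ holds in every application, and the Hedberg-maximal approach is available throughout.
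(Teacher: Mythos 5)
Your argument is correct, but it is worth noting that the paper does not prove this theorem at all: its ``proof'' is a one-line citation to page 43 of Adams's book on Morrey spaces. What you have written is essentially the standard argument behind that citation, carried out in full: the reduction of the Sobolev-type embedding to the Riesz potential $I_\alpha$, Hedberg's two-scale splitting with the dyadic annulus estimates, optimization in $R$ to get the pointwise bound $\vert I_\alpha f(x)\vert \leq C(Mf(x))^{p/p'}\Vert f\Vert_{M^{p,q}}^{1-p/p'}$, and then the Chiarenza--Frasca boundedness of the maximal operator on $M^{p,q}$ to convert this into the Morrey norm bound; your bookkeeping of the exponents (including the verification $q'\,p/p' = q$ and the specialization $p=n/k$, $q=4/k$ giving $X^{k}_\alpha \subset X^{k-\alpha}$) is accurate. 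Two of your side remarks are also well taken: the H\"older condition in the statement should indeed read $\alpha p > n$ (the ``$k$'' there is a typo), and the maximal-function step genuinely needs $q>1$, which is harmless here since the paper only uses $q = 4/k$ with $k < 4$, though it means your proof covers a slightly smaller range than the stated $q \geq 1$ (the borderline case would need a weak-type or a direct Adams-style argument that avoids $M$). The only step left at the level of a sketch is the local representation $u = I_\alpha g$ modulo a smooth remainder with $\Vert g\Vert_{M^{p,q}} \lesssim \Vert u\Vert_{M^{p,q}_\alpha}$; for the integer values of $\alpha$ actually used in the paper this follows from the usual gradient or Newtonian potential representation after a cutoff, so it is acceptable, but you should state that reduction explicitly if this is to stand alone in place of the citation.
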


\begin{proof}
This is on page 43 of Adams [1].
\end{proof}

So far, our function spaces are defined on $\mathcal{R}^{n}$.
Because we have choice of a domain, it suffices to fix a domain $\Omega_{l}$ in
$\mathcal{R}^{n}$.
Following Tao--Tian [10], we fix $\Omega_{l} = [-l,l]^{n}$. We use two
extensions for $f$ defined on $\Omega_{l}$.

\begin{definition} [Odd Extension ]\label{D:oddextension}
\begin{equation}
\hat{f}(x) =f(x)  \quad \text{, for} \quad x \in \Omega_{l}  
\end{equation}
\begin{equation}
\hat{f}(lk e_{j} +x)=  - f(lk e_{j} -x) 
\quad \text{, for} \quad x \in \Omega_{l} 
\quad \text{ , $k$ odd}.
\end{equation}
\end{definition}

\begin{definition} [Even Extension ]\label{D:evenextension}
\begin{equation}
\bar{f}(x) = u(x)  \quad \text{, for} \quad x \in \Omega_{l}  
\end{equation}
\begin{equation}
\bar{f}(lk e_{j} +x)=  \bar{f}(l ke_{j} -x) 
\quad \text{, for} \quad x \in \Omega_{l} 
\quad \text{ , $k$ odd}.
\end{equation}
\end{definition}

We also fix a smooth cutoff function $\Psi$, with support in $[-2,2]$, that is one on $[-1,1]$.
Let $\Psi_{l}(x) = \Phi(\frac{x}{l} ) $.
\begin{definition}[Morrey Norm Extension]\label{D:morreynormextension} 
\begin{equation}\label{E:morreynormextension} 
\Vert f \Vert_{M^{p,q}(\Omega_{l}) } 
= \Vert \Psi_{l} \hat{f} \Vert_{M^{p,q}} =
\Vert \Psi \bar{f} \Vert_{M^{p,q}}.
\end{equation}		
\end{definition}

\begin{definition}[Morrey--Sobolev  Norm Extension]\label{D:morreysobolevnormextension} 
\begin{equation}\label{E:morreysobolevnormextension} 
\Vert f \Vert_{M^{p,q}_{\alpha}(\Omega_{l}) }= 
= \Vert \Psi_{l} \overline{f} \Vert_{M^{p,q}_{l}}. 
\end{equation}		
\end{definition}

We note that Definition \ref{D:morreysobolevnormextension} is only useful for
Dirichlet boundary conditions.

The even extension is useful for Neumann boundary value problems, but we
only need the even extension for $u \in X^{2}(\Omega_{l})$.

We only use estimates on $\Omega_{l} = [-l,+l]^{n}$, for $1 \leq l \leq	 4$,
and by dilation arguments, the constants for $l=1$ differ from the constants 
for $1 \leq l \leq 4$ by fixed constants in the scale.

Some remarks about equivalent norms are in order:
We recall Definition \eqref{D:morreynormextension}.
We may also use:

\begin{definition}
\begin{equation}\label{E:primenormoff}
\Vert f \Vert_{M^{p,q}(\Omega_{l})}' = \underset{ \underset{x \in \Omega_{l}}{ r \leq l}}{max}
\left(
r
\right)^{\frac{n}{p} -\frac{n}{q}} 
\left(
\int_{\vert x-y \vert \leq r} \vert  \hat{f}(y) \vert^{q} \, (dy)^{n}
\right)^{\frac{1}{q}}	
\end{equation}
\end{definition}
and

\begin{definition}
\begin{equation}\label{E:doubleprimenormoff}
\Vert f \Vert_{M^{p,q}(\Omega_{l})}'' = \underset{ \underset{x,y \in \Omega_{l}}{ r \leq 1}}{max}
\left(
r
\right)^{\frac{n}{p} -\frac{n}{q}} 
\left(
\int_{\underset{\vert x-y \vert \leq r}
{x \in \Omega_{l}}} \vert  f(y) \vert^{q} \, (dy)^{n}
\right)^{\frac{1}{q}}.	
\end{equation}
\end{definition}

There are combinatorial constants involved in the comparison estimates.
But, each of the following are easily proved to be valid in any of these norms:

\textbf{Restriction}:
\begin{equation}\label{E:restriction}
M^{p,q}(\Omega_{l}) \hookrightarrow M^{p,q}(\Omega_{l'}) \quad \text{, where}
\quad l' \leq l
\end{equation}

\textbf{Subspaces}:
\begin{equation}\label{E:subspaces}
M^{p,q}(\Omega_{l}) \hookrightarrow M^{p',q'}(\Omega_{l'}) \quad \text{, where}
\quad p' \leq p \, ,
\quad
q' \leq q
\end{equation}

\textbf{Multiplication}:
\begin{equation}\label{E:multiplication}
M^{p',q'}(\Omega_{l})
\otimes
M^{p'',q''}(\Omega_{l})
 \hookrightarrow M^{p,q}(\Omega_{l'}) \, \text{, where}
\quad \frac{1}{p'} + \frac{1}{p''}  \leq 
\frac{1}{p}
 \, ,
\quad
\quad \frac{1}{q'} + \frac{1}{q''}  \leq 
\frac{1}{q}.
\end{equation}

\textbf{Power Law} (for ' and '' norms):
\begin{equation}\label{E:powerlaw}
\Vert f^{\alpha} \Vert_{M^{p,q}(\Omega_{l})} \leq 
\left(\Vert f \Vert_{M^{\alpha p,\alpha q}(\Omega_{l})}
\right)^{\alpha}
 \quad \text{, where} \quad
\alpha > 0
\end{equation}

\textbf {Invertibility of $\triangle$ and $\triangle-cu$ on Morrey Spaces}

First, we have an invertibility result for the Laplace operator on Morrey Spaces defined on $\Omega$

\begin{theorem}\label{T:laplacianinversiononmorreyspaces}
$\triangle \colon M^{p,q}_{2}(\Omega) \rightarrow M^{p,q}(\Omega) $ is 
invertible.
\end{theorem}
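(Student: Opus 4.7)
The plan is to invert $\triangle$ by convolution with the Newtonian kernel applied to the odd extension, and then to read off the $M^{p,q}_2$ regularity from the mapping properties of Riesz potentials and Calder\'on--Zygmund operators on Morrey spaces. Concretely, given $f\in M^{p,q}(\Omega)$, I would form the odd extension $\hat f$ across the faces of the cube $\Omega=\Omega_l$ as in Definition \ref{D:oddextension} and then cut off by $\Psi_l$ so that $g := \Psi_l \hat f$ has compact support in $\mathbb{R}^n$ and, by Definition \ref{D:morreynormextension}, satisfies $\|g\|_{M^{p,q}(\mathbb{R}^n)}\le C\|f\|_{M^{p,q}(\Omega)}$. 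Define
\[
u(x) = (N * g)(x), \qquad N(x) = c_n |x|^{-(n-2)},
\]
the Newtonian potential, which solves $-\triangle u = g$ on $\mathbb{R}^n$. The odd symmetry of $\hat f$ across each reflecting hyperplane $\{x_j = lk\}$ ($k$ odd) is inherited by the convolution with the even kernel $N$, so that $u$ vanishes on $\partial\Omega$. Restricting $u$ to $\Omega$ gives the candidate inverse.

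The key estimate is then the $M^{p,q}$-boundedness of the second derivatives of $N*g$, i.e.\ the Calder\'on--Zygmund estimate
\[
\|\partial_i \partial_j (N*g)\|_{M^{p,q}(\mathbb{R}^n)} \le C\,\|g\|_{M^{p,q}(\mathbb{R}^n)},
\]
together with the Adams/Riesz-potential estimate $\|I_2 g\|_{M^{p,q}(\mathbb{R}^n)} \le C\|g\|_{M^{p,q}(\mathbb{R}^n)}$ (valid since $I_2$ gains two derivatives but we only need control at the level of $M^{p,q}$ itself through one integration by parts against a test function). Both results are standard in the Morrey-space literature; the singular-integral version is due to Peetre, and the Riesz-potential version is in Adams~[1], which is the reference used throughout this appendix. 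Combining them gives
\[
\|u\|_{M^{p,q}_2(\Omega)} \le C\,\|f\|_{M^{p,q}(\Omega)},
\]
which is precisely the bound that makes $\triangle$ an isomorphism onto its image, and together with the solvability statement $\triangle u=-g=-f$ on $\Omega$ with $u|_{\partial\Omega}=0$ shows surjectivity; injectivity follows from the standard maximum principle applied to the Dirichlet problem.

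The main obstacle, and the only nonstandard point, is the interaction between the odd extension/cutoff procedure and the double-derivative Morrey estimate: one must check that the commutator terms produced by $\Psi_l$ (derivatives of the cutoff acting on $\hat f$) remain in $M^{p,q}$ with norm controlled by $\|f\|_{M^{p,q}(\Omega)}$, and that the reflection does not destroy the Calder\'on--Zygmund cancellation when the support of $g$ crosses the reflecting hyperplane. Both are routine but must be verified: for the cutoff terms one uses that $\nabla \Psi_l$ and $\triangle \Psi_l$ are bounded with support in a fixed annulus, and for the reflection one observes that odd reflection commutes with $N*\cdot$ because $N$ is even, so the resulting $u$ on the reflected cube is precisely minus the $u$ on $\Omega$, making the Dirichlet boundary condition an automatic consequence of the construction rather than an extra equation to impose.
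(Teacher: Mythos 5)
There is a genuine gap, and it sits exactly at the step you flag as "routine": the claim that $u=N*(\Psi_l\hat f)$ vanishes on $\partial\Omega$ because odd reflection commutes with convolution against the even kernel $N$. That commutation argument would apply to the full odd (periodically reflected) extension $\hat f$, but $N*\hat f$ is not an absolutely convergent integral, since $\hat f$ does not decay; and once you multiply by the cutoff $\Psi_l$ to make the convolution well defined, the symmetry is destroyed, because $\Psi_l$ is not invariant under reflection in the faces $\{x_j=\pm l\}$. So $g=\Psi_l\hat f$ is not odd about those hyperplanes, and $u|_{\partial\Omega}\neq 0$ in general. Consequently your $u$ is only a particular solution of $\triangle u=-g$; it is not the inverse of the Dirichlet realization of $\triangle$, which is the operator the theorem (and its later use, e.g.\ for Theorem \ref{T:invertibilityoflaplacianwithlowerorderterm} and the maximum principle arguments of Appendix B) requires. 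To repair your route you would have to add a harmonic correction matching the nonzero boundary trace of $u$ on the cube and estimate that correction in $M^{p,q}_2(\Omega)$ up to the boundary of a Lipschitz (cornered) domain, which is not a routine commutator estimate and is not supplied. The Calder\'on--Zygmund and Riesz-potential ingredients from Adams are fine and are indeed what the paper also invokes; the problem is solely the boundary condition.

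The paper avoids this trap by reversing the order of operations: it first solves the Dirichlet problem $\triangle f=g$ in $\Omega$ by classical means, and only then reflects the \emph{solution}, using precisely the Dirichlet condition to ensure that the odd extension satisfies $\triangle\hat f=\hat g$ across the reflecting faces. After cutting off with $\psi_3$ (which equals $1$ on a neighborhood $\Omega_3$ of $\Omega$), the cutoff error terms $2\,d\psi_3\cdot d\hat f+(\triangle\psi_3)\hat f$ are supported away from $\Omega$, so the solution splits as $f_1+f_2$ where $f_1$ is handled by the whole-space inversion of $\triangle$ on $M^{p,q}$ (Adams) and $f_2$ is harmonic on $\Omega_3$, hence smooth on $\Omega_2$ by interior elliptic regularity. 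In that scheme the boundary condition never has to be recovered from a potential-theoretic construction, which is exactly the point your proposal is missing.
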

\begin{proof}
We prove this for $\Omega = [-1,1]$. The proof for arbitrary $\Omega_{l}$
is obtained by scaling.
First, we solve $ \triangle f =g$ in $\Omega$, with Dirichlet boundary conditions. Then, $ \triangle \hat{f} = \hat{g}$ in $\mathcal{R}^{n}$.
Choose
\begin{equation}
\psi_{3}(x) =
\begin{cases}
0 \quad \text{for,}	\quad & x \notin \Omega_{4} \\
1 \quad \text{for,}	\quad & x \in \Omega_{3}
\end{cases}.
\end{equation}
Then
\begin{equation}
\triangle ( \psi_{3} \hat{f} ) = 
\psi_{3} \hat{g} +
(\triangle( \psi_{3})) \hat{f}
+ 2 (d \psi_{3}) \bullet (d \hat{f}).
\end{equation}
Now let
\begin{equation}
 \psi_{3} \hat{f} = f_{1} + f_{2}
\end{equation}
where
\begin{align}
& \triangle f_{1} = \psi_{3} \hat{f} \in M^{p,q} \\
& \triangle f_{2} = 2 d\psi_{3} \bullet d \hat{f} +
( \triangle \psi_{3} ) \hat{f} \in L^{q}_{1}.
\end{align}
Here $f_{1} \in M^{p,q}_{2}$ by the invertibility of $\triangle$ on
$M^{p,q}$, ( see theorem
8.1
of Adams [1]).
We have $f_{2}$ in an appropriate Sobolev space on $\mathcal{R}^{n}$.
But $\triangle f_{2} = 0 $ in $\Omega_{3}$. Thus by elliptic regularity
$f_{2} \mid \Omega_{2} \in C^{\infty}( \Omega_{2} )$.
Hence,
\begin{align}
&\Vert f \Vert_{M^{p,q}_{2}(\Omega )} =
\Vert \psi_{3} \hat{f} \Vert_{M^{p,q}_{2}} \leq
\Vert \psi_{3} f_{1} \Vert_{M^{p,q}_{2}} +
\Vert \psi_{3} f_{2} \Vert_{M^{p,q}_{2}} \\
&\leq
C_{1} \Vert f_{1} \Vert_{M^{p,q}_{2} } +
C_{2} \Vert f_{2} 
\mid \Omega_{2} 
\Vert_{C^{\infty}(\Omega_{2})}.
\end{align}
\end{proof}

We are next interested in the properties of $\triangle - u \colon M^{p,q}_{2}(\Omega) \rightarrow M^{p,q}(\Omega)$.
Again, to define the operator, we note that $\triangle - \bar{u}$, where $\bar{u}$ is the even extension of $u$ to $\mathcal{R}^{n}$, has the desired
properties.
Consider
\begin{equation}\label{E:laplacianwithlowerordertermequation}
( \triangle - \bar{u} ) \hat{f} = \hat{g}.
\end{equation}

We need only to estimate the norm of $\psi \bar{u} \hat{f}$ in $M^{p}_{q}$ in terms of $\psi \hat{f}$ in $M^{p}_{q,2}$.
\begin{theorem}\label{T:invertibilityoflaplacianwithlowerorderterm}
Let $2<k<4$.
If $u \in M^{\frac{n}{2},2} (\Omega) = X^{2}(\Omega)$ is sufficently small then

\begin{equation}\label{E:invertibilityoflaplacianwithlowerorderterm}	
\triangle -u \colon X^{k}_{2}(\Omega) \rightarrow X^{k}(\Omega)
\end{equation}
is invertible.
\end{theorem}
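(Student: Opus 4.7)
The plan is to treat $\triangle - u$ as a small perturbation of $\triangle$ and invert it by a Neumann series argument. Concretely, I would factor
\[
\triangle - u = \triangle \circ (I - \triangle^{-1} M_u),
\]
where $M_u$ denotes multiplication by $u$, and then show that for $\|u\|_{X^2(\Omega)}$ small enough, the operator $\triangle^{-1} M_u$ is a contraction on $X^k_2(\Omega)$.

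The first step is the multiplier bound. Starting with $f \in X^k_2(\Omega)$, the Morrey--Sobolev embedding \eqref{E:specializtionofmorreysobolevembeddingtheorem} gives $X^k_2(\Omega) \hookrightarrow X^{k-2}(\Omega)$ (this uses $k>2$, which is part of the hypothesis). Then the multiplication rule \eqref{E:multiplication}, applied with exponents $\tfrac{1}{n/(k-2)} + \tfrac{1}{n/2} = \tfrac{k}{n}$ (and analogously on the integration powers), yields
\[
\|u f\|_{X^k(\Omega)} \;\leq\; C\,\|u\|_{X^2(\Omega)}\,\|f\|_{X^{k-2}(\Omega)} \;\leq\; C'\,\|u\|_{X^2(\Omega)}\,\|f\|_{X^k_2(\Omega)}.
\]
Combining this with the invertibility of $\triangle \colon X^k_2(\Omega) \to X^k(\Omega)$ from Theorem~\ref{T:laplacianinversiononmorreyspaces}, the composition $\triangle^{-1} M_u \colon X^k_2(\Omega) \to X^k_2(\Omega)$ has operator norm bounded by $C''\,\|u\|_{X^2(\Omega)}$.

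Now choose $\eta_k > 0$ so that whenever $\|u\|_{X^2(\Omega)} \leq \eta_k$, we have $C''\,\|u\|_{X^2(\Omega)} \leq \tfrac{1}{2}$. Then $I - \triangle^{-1} M_u$ is invertible on $X^k_2(\Omega)$ by the convergent Neumann series $\sum_{j\geq 0}(\triangle^{-1} M_u)^j$, and
\[
(\triangle - u)^{-1} = (I - \triangle^{-1} M_u)^{-1}\,\triangle^{-1} \colon X^k(\Omega) \to X^k_2(\Omega)
\]
is the desired bounded inverse, with operator norm controlled by $2\,\|\triangle^{-1}\|_{X^k \to X^k_2}$.

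The only non-routine point is bookkeeping for the extension conventions: the operator $\triangle - u$ is really defined via the even extension $\bar u$ on $\mathbb{R}^n$ together with odd extensions of $f$, so I need the multiplication and Sobolev estimates to pass between the $\Omega$-norms of Definitions~\ref{D:morreynormextension}, \ref{D:morreysobolevnormextension} and the global $M^{p,q}$ norms used in Theorems~\ref{T:multiplicationin morreyspaces}--\ref{T:morreysobolevembeddingtheorem}. This is handled exactly as in the proof of Theorem~\ref{T:laplacianinversiononmorreyspaces} by inserting the cutoff $\psi_3$, splitting $\psi_3 \widehat{f}$ into a piece with right-hand side in $M^{p,q}$ and a harmonic remainder that is smooth by interior elliptic regularity; this is the main technical obstacle, but it is a direct replay of the earlier argument, and the upper bound $k < 4$ enters only to keep $X^{k-2}$ well-defined within the admissible Morrey scale.
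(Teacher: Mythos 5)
Your argument is correct and is essentially the paper's own proof, which just cites the embedding $X^{k}_{2}\hookrightarrow X^{k-2}$ and the multiplication $X^{k-2}\otimes X^{2}\hookrightarrow X^{k}$ together with the invertibility of $\triangle$ (Theorem~\ref{T:laplacianinversiononmorreyspaces}); your Neumann-series factorization merely makes explicit the standard perturbation step the paper leaves implicit. The exponent bookkeeping and the remark about the Dirichlet extension conventions match the paper's intent, so no further changes are needed.
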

\begin{proof}
This follows from $X^{k}_{2} \hookrightarrow X^{k-2}$, and $X^{k-2} \bigotimes X^{2} \hookrightarrow X^{k} $. See Theorem \ref{T:multiplicationin morreyspaces}
Note that our spaces assume Dirichlet boundary data.
\end{proof}

\appendix[{ \bfseries Appendix B Eigenvalues and the Maximum Principle}]\label{A:AppendixB} \newline
The goal of this appendix is to prove a maximum principle for $ -\triangle -u $, when $u \in X^{2}(\Omega_{l} )$ is small on $\Omega_{l} = [-l,l]^{n}$,
 $1 \leq l \leq 4$. Since the constants change by fixed amounts, without loss of generality, we can assume $\Omega = [-1,1 ]^{n}$.
 \begin{theorem}\label{T:morreypoincareineq}
 There exists a constant $\lambda$, (depending on the norm of $\triangle^{-1}$ on
 $X^{2}(\Omega)$ and on the constants in the Morrey-Sobolev embedding $X^{3}_{2} (\Omega) \subseteq  X^{1}( \Omega)$  such that
 \begin{equation}\label{E:morreypoincareineq}
 \lambda \int_{\Omega} u \phi^{2} \,  (dx)^{n} \leq \Vert u \Vert_{X^{2}(\Omega) } \int_{\Omega} 
\vert d \vert \phi \vert \vert^{2} \, (dx)^{n}
 \end{equation}
  for $\phi \in L^{2}_{1,0} (\Omega)$.
 \end{theorem}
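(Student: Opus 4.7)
The plan is to invert the Laplacian on the weight $u$, use integration by parts followed by Cauchy--Schwarz, and close the estimate via a self-improving bootstrap.

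First, let $v$ solve $-\triangle v = u$ in $\Omega$ with Dirichlet boundary data. By the invertibility of the Laplacian on Morrey spaces (Theorem \ref{T:laplacianinversiononmorreyspaces}), $v \in X^2_2(\Omega)$ with $\|v\|_{X^2_2(\Omega)} \leq C_1 \|u\|_{X^2(\Omega)}$, where $C_1$ bounds the norm of $\triangle^{-1}$. The first derivatives satisfy $\nabla v \in X^2_1(\Omega)$; by the Morrey--Sobolev embedding $X^2_1(\Omega) \hookrightarrow X^1(\Omega) = M^{n,4}(\Omega)$ (analogous in strength to the $X^3_2 \hookrightarrow X^1$ embedding cited in the statement of the theorem), we get $\|\nabla v\|_{X^1(\Omega)} \leq C_2 \|u\|_{X^2(\Omega)}$. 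By the Morrey multiplication law $X^1 \otimes X^1 \hookrightarrow X^2$, the pointwise square $|\nabla v|^2$ lies in $X^2(\Omega)$ with $\||\nabla v|^2\|_{X^2(\Omega)} \leq C_3 \|u\|_{X^2(\Omega)}^2$.

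Now for $\phi \in L^2_{1,0}(\Omega)$, integration by parts using $\phi = 0$ on $\partial \Omega$ gives
\[
\int_\Omega u \phi^2\, dx = -\int_\Omega (\triangle v)\,\phi^2\, dx = 2\int_\Omega \phi\, \nabla v \cdot \nabla \phi\, dx,
\]
and Cauchy--Schwarz yields
\[
\int_\Omega u \phi^2\, dx \leq 2 \left( \int_\Omega \phi^2 |\nabla v|^2\, dx \right)^{1/2} \|\nabla \phi\|_{L^2(\Omega)}.
\]
Let $\Lambda$ be the best constant such that $\int_\Omega w\phi^2 \leq \Lambda \|w\|_{X^2(\Omega)} \|\nabla \phi\|_{L^2(\Omega)}^2$ holds for all $w \in X^2(\Omega)$ and $\phi \in L^2_{1,0}(\Omega)$. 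Applying this with $w = |\nabla v|^2$ and substituting yields
\[
\int_\Omega u\phi^2\, dx \leq 2 \sqrt{\Lambda C_3} \,\|u\|_{X^2(\Omega)} \|\nabla \phi\|_{L^2(\Omega)}^2,
\]
whence $\Lambda \leq 2\sqrt{\Lambda C_3}$, so $\Lambda \leq 4 C_3$. The asserted constant is then $\lambda = 1/\Lambda$, depending only on $C_1$, $C_2$, and $C_3$.

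The main obstacle is that this self-improving bootstrap presupposes $\Lambda < \infty$ a priori; otherwise the inequality $\Lambda \leq 4 C_3$ is vacuous. To circumvent this, one first works with $(u,\phi)$ that are smooth and compactly supported in the interior of $\Omega$, for which $\int u\phi^2$ and all intermediate integrals are manifestly finite and the integration by parts is classical. The bootstrap then produces a uniform finite bound, and density extends the inequality to all $u \in X^2(\Omega)$ and $\phi \in L^2_{1,0}(\Omega)$.
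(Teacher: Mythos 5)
Your Morrey-space bookkeeping is fine (the elliptic estimate $\|v\|_{X^2_2}\lesssim\|u\|_{X^2}$, the embedding $X^2_1\subset X^1$, and the multiplication $X^1\otimes X^1\to X^2$ are all legitimate uses of the appendix), and the integration-by-parts plus Cauchy--Schwarz step is correct. But the closing step is circular, and the fix you offer does not repair it. The inequality $\Lambda\leq 2\sqrt{\Lambda C_3}$ only yields $\Lambda\leq 4C_3$ if you already know $\Lambda<\infty$, and nothing in the argument establishes that. Restricting to smooth, compactly supported $(u,\phi)$ makes each individual integral finite, but the best constant is a \emph{supremum} over that class, and a priori it can still be $+\infty$; density then buys you nothing, since passing to the limit requires a bound that is uniform over the dense class -- which is exactly what is missing. (A secondary inconsistency: the function $w=|\nabla v|^2$ to which you apply the best-constant inequality is not compactly supported even when $u$ is, so the restricted class is not even preserved by the bootstrap step.) In short, the self-improvement scheme presupposes precisely the qualitative statement of the theorem.

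The paper closes this loop by a compactness argument rather than a bootstrap: it truncates the weight, $u_c=\min(u,c)$, so that the constrained minimization of $\int_\Omega|d\phi|^2$ subject to $\int_\Omega u_c\phi^2=1$ over $L^2_{1,0}(\Omega)$ has a minimizer (boundedness of $u_c$ plus the compact embedding $L^2_{1,0}\subset L^2$), i.e.\ the extremal constant $\rho_c$ for the truncated weight is finite \emph{and attained} by an eigenfunction $\phi_c$ solving $-\triangle\phi_c=\rho_c u_c\phi_c$. The same Morrey machinery you invoke is then applied to this equation ($\|\triangle\phi_c\|_{X^3}\leq\rho_c\|u_c\|_{X^2}\|\phi_c\|_{X^1}$ via $X^2\otimes X^1\to X^3$, together with $X^3_2\subset X^1$ and inversion of $\triangle$) to give the lower bound $1\leq c_1c_2\rho_c\|u\|_{X^2}$, uniformly in $c$; letting $c\to\infty$ finishes the proof. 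If you want to rescue your route, you need an analogous a priori finiteness input -- e.g.\ first prove the inequality with a cruder norm (H\"older plus Sobolev gives $\int u\phi^2\lesssim\|u\|_{L^{n/2}}\|\nabla\phi\|_{L^2}^2$ for truncated or smooth $u$) and then upgrade to the $X^2$ norm by a quantitative iteration; but that requires an extra interpolation-type ingredient and is no longer the short argument you wrote.
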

 \begin{proof}
 It is sufficent to prove this for $\phi$ smooth, since  since smooth functions are dense in $L^{2}_{1,0} (\Omega)$. Fix such a $\phi_{0}$.
 Then, choose $\rho$ so that
 \begin{equation}\label{E:poincaremorreystringofineqs}
 \int_{\Omega} \vert d \phi_{0} \vert^{2} \, (dx)^{n} \leq
 \rho \int_{\Omega} u \phi_{0}^{2} \, (dx)^{n} \leq 
 \rho \int_{\Omega} u_{c} \phi_{0}^{2} \, (dx)^{n} +
  \rho \int_{\Omega} ( u -u_{c} ) (max \phi_{0}^{2} )\, (dx)^{n} 
 \end{equation}
 Here
 \begin{equation}
 u_{c}= \begin{cases}
 u, & \text{ if } u \leq c \\
 c & \text{if } u \geq c
 \end{cases}
 \end{equation}
 and $\lim_
 {c \rightarrow \infty }
  \int_{\Omega}( u-u_{c}) \, (dx)^{n} = 0$.
  Note that
  \begin{equation}\label{E:cutoffsmorreynorm}
  \Vert u_{c} \Vert_{X^{2}} \leq \Vert u \Vert_{X^{2}}
  \end{equation}
  Minimize $\int_{\Omega} \vert d \phi \vert^{2} \, (dx)^{n}$ subject to the constraint $ \int_{\Omega} u_{c} \phi^{2} \, (dx)^{n} =1$, for $\phi \in L^{2}_{1,0}(\Omega)$. Since $L^{2}_{1,0}(\Omega) \subset L^{2}(\Omega)$ is compact and $u_{c} \leq c$ we get an eigenvalue $\rho_{c}$ and an eigenfunction $ \phi_{c}$ in $L^{p}_{2,0}$ for all $p$,  such that	
  \begin{subequations}\label{E:eigenequationandpoincare}
  \begin{equation}\label{E:eigenequation}
  -\triangle \phi_{c} -\rho_{c} u_{c} \phi_{c} =0
  	\end{equation}
  \begin{equation}\label{E:eigenpoincareineq}
  \rho_{c} \int_{\Omega} u_{c} \phi^{2} \, (dx)^{n} \leq \int_{\Omega} \vert d \phi \vert^{2} \, (dx)^{n} 	
  \end{equation}
   for all $\phi \in L^{2}_{1,0}(\Omega)$
\end{subequations}
 But from \eqref{E:eigenequation}, we see that
 
 \begin{equation}\label{E:laplacianinequalities}
 \Vert \triangle \phi_{c} \Vert_{X^{3}
 (\Omega)} \leq	 \rho_{c} 
 \Vert u_{c} \phi_{c} \Vert_{X^{3}}(\Omega) 
 \leq \rho_{c} \Vert u_{c} \Vert_{X^{2}(\Omega)} \Vert \phi_{c}
 \Vert_{X^{1}}(\Omega).
  \end{equation}
  However
  \begin{equation}\label{E:mixedmorreyeigenvalueestimate}
\Vert  \phi_{c} \Vert_{X^{1}(\Omega) } \leq c_{1}
\Vert  \phi_{c} \Vert_{X^{3}_{2} (\Omega) } \leq
c_{1} c_{2} \Vert  \triangle \phi_{c} \Vert_{X^{3} (\Omega) }
\leq c_{1} c_{2} \rho_{c} 
\Vert u_{c}  \Vert_{X^{2}(\Omega)} \Vert \phi_{c} \Vert_{X^{1} }(\Omega).
\end{equation}
Hence 
\begin{equation}\label{E:lower1morreyinequality}
1 \leq c_{1}c_{2} \Vert u \Vert_{X^{2}(\Omega)} \rho_{c}.
\end{equation}
Use inequality \eqref{E:cutoffsmorreynorm}, inequality \eqref{E:eigenpoincareineq} and inequality \eqref{E:lower1morreyinequality}
to get:
\begin{align}
&\int_{\Omega} u \phi_{0}^{2} (dx)^{n} 
\leq
c_{1}c_{2} \Vert u \Vert_{X^{2}(\Omega)} \rho_{c} \int_{\Omega} 
u_{c} \phi_{0}^{2} (dx)^{n} +\\
\notag
&\int_{\Omega} (u-u_{c}) \phi_{0}^{2} (dx)^{n} 
\leq c_{1} c_{2} \Vert u \Vert_{X^{2}(\Omega)} \int_{\Omega} \vert d \phi_{0} \vert^{2} (dx)^{n} + \\ \notag
&\int_{\Omega)} (u -u_{c} ) (dx)^{n} 
\underset{x \in \Omega}{ max} \phi_{0}^{2}.
\end{align}
Since $\lim_{c \rightarrow \infty} \int_{\Omega} (u -u_{c}) (dx)^{n} =0$,
we get the result with $\lambda = \frac{1}{c_{1} c_{2}}$.
  \end{proof}
  
  The proof of the smooth theorem is immediate from 
  Theorem 16.
  \begin{theorem}\label{T:gnegativitycondition}
  If $(u,g)$ are smooth, there exists a constant $\eta$ depending on the norm of $\triangle^{-1}$ and on a Morrey-Sobolev embedding constant, such that if
  $\Vert u \Vert_{X^{2}(\Omega)} < \eta$,  $g=0$ on $\partial \Omega$ and
  \begin{equation}\label{E:gsubequation}
  - \triangle g - u g \leq 0,
   \end{equation}
   then $g \leq 0$
  \end{theorem}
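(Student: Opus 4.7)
The plan is to prove this by testing the differential inequality against the positive part $g_{+} = \max(g, 0)$, integrating by parts, and then absorbing the lower-order term using Theorem \ref{T:morreypoincareineq}.

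Concretely, first I would note that since $g$ is smooth and $g = 0$ on $\partial \Omega$, the positive part $g_{+}$ is Lipschitz and vanishes on $\partial \Omega$, hence lies in $L^{2}_{1,0}(\Omega)$. Multiplying the inequality $-\triangle g - u g \leq 0$ by $g_{+}$ and integrating over $\Omega$, integration by parts (with boundary term vanishing) gives
\begin{equation}
\int_{\Omega} |d g_{+}|^{2} \, (dx)^{n} \leq \int_{\Omega} u \, g_{+}^{2} \, (dx)^{n},
\end{equation}
using the standard facts that $d g \cdot d g_{+} = |d g_{+}|^{2}$ pointwise and $g \, g_{+} = g_{+}^{2}$.

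Next, I would apply Theorem \ref{T:morreypoincareineq} to the admissible test function $\phi = g_{+}$ (here $|d|g_{+}|| = |d g_{+}|$ since $g_{+} \geq 0$), obtaining the constant $\lambda$ supplied by that theorem and the bound
\begin{equation}
\lambda \int_{\Omega} u \, g_{+}^{2} \, (dx)^{n} \leq \Vert u \Vert_{X^{2}(\Omega)} \int_{\Omega} |d g_{+}|^{2} \, (dx)^{n}.
\end{equation}
Combining the two displayed inequalities yields
\begin{equation}
\bigl(\lambda - \Vert u \Vert_{X^{2}(\Omega)}\bigr) \int_{\Omega} |d g_{+}|^{2} \, (dx)^{n} \leq 0.
\end{equation}
Thus choosing $\eta = \lambda$, the hypothesis $\Vert u \Vert_{X^{2}(\Omega)} < \eta$ forces $\int_{\Omega} |d g_{+}|^{2} \, (dx)^{n} = 0$. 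Since $g_{+}$ vanishes on $\partial \Omega$, the ordinary Poincaré inequality (or simply that $g_{+}$ must be constant and $0$ on the boundary) gives $g_{+} \equiv 0$, i.e. $g \leq 0$ throughout $\Omega$.

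I do not expect a serious obstacle: the argument is the classical Moser/Stampacchia approach to maximum principles for Schrödinger-type operators, and the only nontrivial analytic input is the Morrey-type Poincaré inequality which has already been established as Theorem \ref{T:morreypoincareineq}. The minor technical point to verify carefully is the admissibility of $g_{+}$ as a test function in $L^{2}_{1,0}(\Omega)$ and the equality $\int d g \cdot d g_{+} = \int |d g_{+}|^{2}$; by smoothness of $g$ this is routine (approximate $t \mapsto t_{+}$ by smooth convex functions if desired). The constant $\eta$ depends only on $\lambda$, which in turn depends on the norm of $\triangle^{-1}$ on $X^{2}(\Omega)$ and on the Morrey-Sobolev embedding $X^{3}_{2}(\Omega) \hookrightarrow X^{1}(\Omega)$, exactly as claimed.
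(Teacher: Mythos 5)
Your proposal is correct and follows essentially the same route as the paper: test the inequality against $g_{+}$, integrate by parts, and absorb the term $\int_{\Omega} u\, g_{+}^{2}\,(dx)^{n}$ using Theorem \ref{T:morreypoincareineq}, taking $\eta = \lambda$. The only difference is that you spell out the final Poincar\'e step from $\int_{\Omega} \vert d g_{+}\vert^{2}\,(dx)^{n}=0$ to $g_{+}\equiv 0$, which the paper leaves implicit.
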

  \begin{proof}
  	Let 
  	\begin{equation}\label{E:definitionofgplus}
  	g_{+}(x) =
  	\begin{cases}
   0 & \text{if}  \quad g(x) \leq 0 \\
  	g(x) & \text{if}  \quad  g(x) \geq 0
  	\end{cases}.
  	\end{equation}
  	Then from \eqref{E:gsubequation}
  	\begin{equation}\label{E:sublaplacianestimateongplus}
  	\int_{\Omega} \left( \vert d g_{+} \vert^{2} - u g_{+}^{2} \right) \, (dx)^{n} =
  	\int_{\Omega} (- \triangle g - u g ) g_{+} \, (dx)^{n} \leq 0.
  \end{equation}
  But,
  \begin{equation}\label{E:gplusweightedL2lowerboundonu}
  \int_{\Omega}  u g_{+}^{2} \, (dx)^{n} \geq 
  	\int_{\Omega} \vert d g_{+} \vert^{2} \, (dx)^{n}
  	\geq 
  	\left( \frac{\lambda}{\Vert u 
  	\Vert_{X^{2}
  	(\Omega) }}
  	\right)
  	\int_{\Omega}  u g_{+}^{2} \, (dx)^{n} 
  \end{equation}
   from  Theorem \ref{T:morreypoincareineq}.
   If
  $\lambda > \Vert u \Vert_{X^{2}(\Omega)}$
  , then
  \begin{equation}\label{E:integralugplusequalzeroeq}
  \int_{\Omega} u g_{+}^{2} \, (dx)^{n} =
  \int_{\Omega} \vert dg_{+} \vert^{2} \, (dx)^{n} =
  0.
  \end{equation}
  Hence $\lambda = \eta$ of  Theorem \ref{T:morreypoincareineq}.
  \end{proof}
  After this warm-up, we only need a few additional ideas to handle the singular case.
  \begin{definition}\label{D:definitionofs}
  If $\mathcal{S} \subset \Omega $ is a closed singular set, a regularized
  distance  function to $\mathcal{S}$ is a map $s \colon \Omega \rightarrow \mathcal{R}^{+}$, such that $s(x) =0 $, for $x \in \mathcal{S}$, is smooth and
  $s \colon \Omega - \mathcal{S} \rightarrow \mathcal{R}$, and
  \begin{equation}\label{E:steindistancebounds}
  c^{-1} (dist(x, \mathcal{S} ) )\leq s(x) \leq 
  c (dist(x, \mathcal{S} )).
   \end{equation}
   Furthermore the k-th derivative of $s$ satisfies
   \begin{equation}\label{E:derivativeboundsonsteindistance}
 \vert (\frac{ d}{dx_{i}} )^{k}
  s(x) \vert 
  \leq C_{k} (s(x))^{-k+1}  C_{k} (dist(x, \mathcal{S} )^{-k + 1} )
    \end{equation}
    on $\Omega - \mathcal{S}$.
  	\end{definition}
  	
  	The existence of this regularized distance function is a theorem of Stein [6] (Theorem 2 page 171) 
  	.
  	The following lemma follows from the definition of Hausdorff measure and a counting argument.
  	\begin{lemma}\label{L:Hausdorffintegralestimate}
  	If $\mathcal{S}\subset \Omega$ is a closed set of finite k-dimensional
  	Hausdorff measure, and $s \colon \Omega \rightarrow \mathcal{R}$ is a regularized distance function to $\mathcal{S}$, then
  	\begin{equation}\label{E:Hausdorffintegralestimate}
  	\int_{s(x) \leq r} 1 \leq  \bar{K} r^{n-k}.
  	\end{equation}
  	
  	\end{lemma}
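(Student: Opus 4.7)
Let $c$ be the constant in the equivalence $c^{-1}\mathrm{dist}(x,\mathcal{S}) \leq s(x) \leq c\,\mathrm{dist}(x,\mathcal{S})$ from Definition \ref{D:definitionofs}. My first step is to reduce the claim to bounding a Euclidean tubular neighborhood of $\mathcal{S}$. The inclusion $\{x : s(x) \leq r\} \subseteq \{x : \mathrm{dist}(x,\mathcal{S}) \leq cr\} =: N_{cr}(\mathcal{S})$ is immediate, so with $\rho = cr$ it suffices to show $|N_\rho(\mathcal{S})| \leq C\rho^{n-k}$; the factor $c^{n-k}$ is absorbed into $\bar{K}$ at the end.

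The core is a covering and counting argument that exploits $H^k(\mathcal{S}) < \infty$. For a small $\rho>0$ I would select a maximal $\rho$-separated subset $\{y_1,\dots,y_N\}\subseteq\mathcal{S}$, so that the balls $B(y_j,\rho/2)$ are pairwise disjoint and $\bigcup_{j=1}^N B(y_j,\rho)\supseteq\mathcal{S}$. The central technical assertion is the counting bound
\[
N \leq C_1\,\rho^{-k},
\]
where $C_1$ depends only on $n$, $k$, and $H^k(\mathcal{S})$. To see why this should hold, invoke the definition of $k$-dimensional Hausdorff measure: for any $\delta>0$ there is a cover $\{U_i\}$ of $\mathcal{S}$ with $\mathrm{diam}(U_i)\leq\delta$ and $\sum_i(\mathrm{diam}\,U_i)^k\leq 2H^k(\mathcal{S})$. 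Taking $\delta < \rho/4$ forces each $U_i$ to meet at most one of the $\rho$-separated centers $y_j$, and combining this with an upper-density control on $H^k$ near each $y_j$ (available $H^k$-a.e.\ because $H^k(\mathcal{S})<\infty$) produces the bound on $N$.

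Granted the counting bound, the conclusion is routine. Since $N_\rho(\mathcal{S})\subseteq\bigcup_{j=1}^N B(y_j,2\rho)$, summing volumes yields
\[
|N_\rho(\mathcal{S})| \;\leq\; N\,\omega_n(2\rho)^n \;\leq\; C_1\,\rho^{-k}\cdot\omega_n\,2^n\,\rho^n \;=\; \bar{K}\,\rho^{n-k},
\]
and substituting $\rho = cr$ gives the stated inequality with $\bar{K}$ absorbing all dimensional constants.

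The main obstacle is the counting step $N\leq C_1\rho^{-k}$, which is subtle because the Hausdorff cover $\{U_i\}$ may in principle consist of arbitrarily small sets, so one cannot simply identify $N$ with the number of sets $U_i$. The standard workarounds are either to invoke a density theorem for $H^k$-measurable sets of finite $H^k$-measure (yielding an upper bound on $\Theta^{*k}(\mathcal{S},\cdot)$ that is valid $H^k$-a.e.), or to partition the $\{U_i\}$ into dyadic scales of diameter and group their contributions. Both strategies are standard in geometric measure theory, and once this step is in hand the rest of the argument is elementary.
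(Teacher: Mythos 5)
Your reduction to a Euclidean tubular neighborhood through $c^{-1}\mathrm{dist}(x,\mathcal{S})\le s(x)\le c\,\mathrm{dist}(x,\mathcal{S})$ and the final summation of ball volumes are both fine, and your outline is in effect the ``counting argument'' that the paper only gestures at (it offers no proof of the lemma beyond the sentence that it follows from the definition of Hausdorff measure and a counting argument). The genuine gap is exactly the step you single out, the bound $N\le C_1\rho^{-k}$ for a maximal $\rho$-separated subset of $\mathcal{S}$, and neither of your proposed repairs closes it. That bound would follow from a fixed-scale lower density estimate $H^{k}(\mathcal{S}\cap B(y_j,\rho/2))\ge c\,\rho^{k}$ at the chosen centers, which finite $H^{k}$ measure does not provide. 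The density theorem you cite ($2^{-k}\le\Theta^{*k}(\mathcal{S},x)\le 1$ for $H^{k}$-a.e.\ $x$) is a $\limsup$ statement: it yields, for a.e.\ $x$, arbitrarily small radii depending on $x$ at which $\mathcal{S}\cap B(x,r)$ carries measure comparable to $r^{k}$, but it says nothing at the single fixed scale $\rho$ that the disjoint-ball argument requires. Grouping a Hausdorff cover $\{U_i\}$ by dyadic diameters fails for the reason you yourself note: all the $U_i$ may have diameter far below $\rho$, and then their number (hence the number of centers $y_j$ they capture) is not controlled by $\rho^{-k}\sum_i(\mathrm{diam}\,U_i)^{k}$.

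This is not a technicality that a slicker argument removes: inequality \eqref{E:Hausdorffintegralestimate} is really a statement about upper Minkowski content (box counting at scale $r$), which finite Hausdorff measure does not control. For $n\ge 5$ and $k=n-4$, let $\mathcal{S}$ consist of a point $p$ together with finite grids $G_m$ of spacing $2^{-m}$ filling disjoint cubes of side $2^{-2m/n}$ accumulating only at $p$: this set is compact and countable, so $H^{k}(\mathcal{S})=0$, yet at scale $\rho=2^{-m}$ it contains roughly $2^{m(n-2)}$ points that are $\rho$-separated, so the $\rho$-neighborhood has volume at least of order $2^{-2m}$, while the right-hand side of \eqref{E:Hausdorffintegralestimate} is of order $2^{-4m}$. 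Hence both your counting claim and the lemma as literally stated fail along a sequence of scales, and no constant $\bar K$ depending on $\mathcal{S}$ rescues them. To complete the proof one must strengthen the hypothesis, for instance by assuming finite $k$-dimensional upper Minkowski content of $\mathcal{S}$ (a bound of the form $Cr^{-k}$ on the number of $r$-balls needed to cover $\mathcal{S}$), or lower Ahlfors regularity of $H^{k}$ restricted to $\mathcal{S}$; under either assumption your disjoint-ball argument goes through verbatim. Such Minkowski-type control is plausible for the singular sets the paper has in mind, but it is an additional assumption, not a consequence of finite $(n-4)$-dimensional Hausdorff measure.
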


  	\begin{theorem}\label{T:negativegoffsingularset}
  	Suppose $\mathcal{S} \subset \Omega$ is a closed set of finite $n-4 $ dimensional Hausdorff measure.
  	Let $g^{2 \beta} = g^{1 + \gamma} \in L^{2}(\Omega) $  (which defines $\beta $) for $\gamma >0$, where $u, g \in C^{\infty}(\Omega - \mathcal{S}) $ and $g=0$ on $\partial \Omega - \mathcal{S}$.
  	If $u \in X^{2}(\Omega)$ is sufficently small (depending on $\gamma >0$ )
  	and
  	\begin{equation}\label{E:secondgsublaplacianeq}
  	-\triangle g -u g \leq 0 ,
  	\end{equation}
  	then $g \leq 0$.
  	\end{theorem}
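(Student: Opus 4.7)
The plan is to adapt the proof of Theorem \ref{T:gnegativitycondition}, testing $-\triangle g - ug\leq 0$ against a weighted positive part of $g$ multiplied by a cutoff that removes a neighborhood of $\mathcal{S}$. I will take
\[
\chi_\epsilon \;=\; g_+^{2\beta-1}\,\beta_\epsilon^2 \;=\; g_+^{\gamma}\,\beta_\epsilon^2,
\]
where $\beta_\epsilon = 1-\Psi(s(x)/\epsilon)$ is the cutoff used in Lemma \ref{L:singularweightedreversepoincareestimate}, so that $\beta_\epsilon$ vanishes for $s(x)\leq\epsilon$ and equals $1$ for $s(x)\geq 2\epsilon$. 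The weighting $g_+^\gamma$ rather than $g_+$ is dictated by the integrability hypothesis $g^{2\beta} = g^{1+\gamma}\in L^2(\Omega)$. To handle the non-smoothness of $g_+^\gamma$ when $\gamma<1$, I would use the standard approximation $((g-\delta)_+)^\gamma$ and pass $\delta\downarrow 0$ at the end; the test function is then nonnegative, compactly supported away from $\mathcal{S}$, and vanishes on $\partial\Omega-\mathcal{S}$, which is enough in the relevant trace sense since $\mathcal{S}\cap\partial\Omega$ has $n-1$ Hausdorff measure zero.

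Multiplying the inequality by $\chi_\epsilon$ and integrating by parts, using $\nabla g\cdot\nabla g_+ = |\nabla g_+|^2$ a.e.\ and $g\cdot g_+^\gamma = g_+^{\gamma+1}$, then absorbing the cross term $2\int g_+^\gamma\beta_\epsilon\nabla g_+\cdot\nabla\beta_\epsilon$ by Cauchy--Schwarz with weight $\delta = \gamma/2$, I substitute $h = g_+^\beta$, so that $g_+^{\gamma-1}|\nabla g_+|^2 = 4(1+\gamma)^{-2}|\nabla h|^2$ and $g_+^{\gamma+1} = h^2$. This yields
\[
\frac{2\gamma}{(1+\gamma)^2}\int \beta_\epsilon^2\,|\nabla h|^2\,(dx)^n \;\leq\; \frac{2}{\gamma}\int h^2\,|\nabla\beta_\epsilon|^2\,(dx)^n \;+\; \int u\,h^2\,\beta_\epsilon^2\,(dx)^n.
\]
The next step is to verify that the cutoff error on the right vanishes as $\epsilon\downarrow 0$. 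Since $|\nabla\beta_\epsilon|^2\leq K^2\epsilon^{-2}$ is supported in $A_\epsilon = \{\epsilon\leq s(x)\leq 2\epsilon\}$, Lemma \ref{L:Hausdorffintegralestimate} gives $|A_\epsilon|\leq \bar K\,\epsilon^4$, and $h\in L^4(\Omega)$ (because $h^4 = g_+^{2(1+\gamma)}\in L^1$ is exactly the hypothesis), Cauchy--Schwarz gives
\[
\int h^2\,|\nabla\beta_\epsilon|^2\,(dx)^n \;\leq\; K^2\epsilon^{-2}\Bigl(\int_{A_\epsilon}h^4\,(dx)^n\Bigr)^{1/2}|A_\epsilon|^{1/2} \;\leq\; C\Bigl(\int_{A_\epsilon}h^4\,(dx)^n\Bigr)^{1/2},
\]
which tends to zero by absolute continuity of the integral.

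Passing $\epsilon\downarrow 0$ (monotone convergence on the left and the $u$-term) produces $h\in L^2_{1,0}(\Omega)$ together with $\frac{2\gamma}{(1+\gamma)^2}\int|\nabla h|^2\,(dx)^n \leq \int u\,h^2\,(dx)^n$. Applying Theorem \ref{T:morreypoincareineq} with $\phi = h\geq 0$ (so $|d|\phi|| = |\nabla h|$) bounds the right by $\lambda^{-1}\|u\|_{X^2(\Omega)}\int|\nabla h|^2$, and setting $\eta = 2\gamma\lambda/(1+\gamma)^2$ the hypothesis $\|u\|_{X^2(\Omega)}<\eta$ forces $\int|\nabla h|^2 = 0$, hence $h\equiv 0$ and $g\leq 0$. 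The hard part is the delicate $\epsilon$-accounting, and this is where both the codimension-four assumption on $\mathcal{S}$ and the specific weight $g_+^\gamma$ are essential: a naive test function $g_+\beta_\epsilon^2$ yields $\int_{A_\epsilon}g_+^2 \leq C\,\epsilon^{4\gamma/(1+\gamma)}$ after H\"older, which fails to beat the $\epsilon^{-2}$ from $|\nabla\beta_\epsilon|^2$ once $\gamma<1$. The weighted choice $g_+^\gamma\beta_\epsilon^2$ is engineered so that the natural Dirichlet energy of $h = g_+^\beta$ appears on the left while the error on the right involves only $\|h^2\|_{L^2(A_\epsilon)}$, which combines with $|A_\epsilon|^{1/2}\leq C\epsilon^2$ to exactly cancel the $\epsilon^{-2}$; this is precisely why the hypothesis is phrased via the exponent $2\beta = 1+\gamma$.
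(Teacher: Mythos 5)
Your proposal is correct and follows essentially the same route as the paper: test $-\triangle g - ug \leq 0$ against $g_{+}^{\gamma}$ times the square of a cutoff vanishing near $\mathcal{S}$, substitute $h = g_{+}^{\beta}$, kill the cutoff error via Lemma \ref{L:Hausdorffintegralestimate} together with $g^{2\beta}\in L^{2}$ and Cauchy--Schwarz, and finish with Theorem \ref{T:morreypoincareineq} and the smallness of $\Vert u \Vert_{X^{2}(\Omega)}$. The only cosmetic difference is that you regularize $g_{+}^{\gamma}$ by truncating at level $\delta$, whereas the paper works with $g_{\epsilon} = (g-\epsilon)_{+}$ at a regular value $\epsilon$ and takes a joint limit in the truncation level and the cutoff scale; these are the same device.
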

  	\begin{proof}
  		Let $\Psi$ be a smooth cutoff function with
  	\begin{equation}
  	\Psi(t) =
  	\begin{cases}
  	0 & \text{for} \quad t \leq 1 \\
  	1 & \text{for} \quad   t \geq 1	
  	\end{cases}.
\end{equation}
Assume $s$ is a regularized distance function, and define $\Psi_{R}(x) = \Psi(\frac{s(x)}{R} )$, for $R > 0$.
Let
\begin{equation}
g_{\epsilon} =
\begin{cases}
g- \epsilon & \text{for} \quad g \geq \epsilon \\
0          & \text{for} \quad  g \leq \epsilon
\end{cases}.
\end{equation}
Choose $\epsilon > 0$ such that $\epsilon$ is a regular value of $g$ on
\\$\Omega- \mathcal{S}$. We will take $\epsilon \rightarrow 0$, so that
\begin{equation}
g_{0} =
\begin{cases}
g & \text{for} \quad g \geq 0 \\
0 & \text{for} \quad g \leq 0
\end{cases}.
\end{equation}
We wish to prove $g_{0}=0$.
Now, we have on $\Omega$
\begin{equation}\label{E:gepsilonweightedsublaplacianineq}
- \Psi_{R}^{2} ( g_{\epsilon}^{\gamma} \triangle g -
u g_{\epsilon}^{\gamma} g ) \leq 0
 \end{equation}
 
 \begin{equation}
 u g_{\epsilon}^{\gamma} g =  u g_{\epsilon}^{2\beta} +
 \epsilon u g_{\epsilon}^{\gamma}.
 \end{equation}
 
 We also have:
 \begin{equation}
 \Psi_{R}
 ^{2}
  g_{\epsilon}^{\gamma} \triangle g =
 div (\Psi_{R}^{2} g_{\epsilon}^{\gamma} d g )-
 (\frac{1}{2 \beta})
 (d \Psi_{R}^{2}) g_{\epsilon}^{2 \beta} )
  -
 \left(\frac{\gamma}{\beta^{2}} \right)
 \Psi_{R}^{2} \vert d g_{\epsilon}^{\beta} \vert^{2}
 +(\frac{1}{2 \beta}) (\triangle \Psi_{R}^{2})
 g_{\epsilon}^{2 \beta}.
 \end{equation}
 
 Continue, to get:
 \begin{equation}\label{E:differentialinequality}
 \Psi_{R}^{2} \vert d g_{\epsilon} \vert^{2} =
 \vert d( \Psi_{R} g_{\epsilon}^{\beta}) -	
 (d \Psi_{R}) g_{\epsilon}^{\beta} \vert^{2} \geq
 \left( \frac{1}{2} \vert d( \Psi_{R} g_{\epsilon}^{\beta}) \vert^{2} -
 \vert d \Psi_{R} 
 \vert^{2}
  g_{\epsilon}^{2\beta}
 \right).
 \end{equation}
 
 Putting \eqref{E:gepsilonweightedsublaplacianineq} to \eqref{E:differentialinequality}
   together, we obtain:
 \begin{align}
& (\frac{1}{2}) \left(\frac{\gamma}{\beta^{2}}
 \right)
 \vert d( \Psi_{R} g_{\epsilon}^{\beta}) \vert^{2} 
 \leq 
 u g_{\epsilon}^{2 \beta}
  \Psi_{R}^{2}+
 \left(\frac{\gamma}{\beta^{2}}
 \right)
 \vert d \Psi_{R} \vert^{2} g_{\epsilon}^{2\beta} \\ \notag
& + (\frac{1}{2 \beta}) (\triangle \Psi_{R}^{2}) g_{\epsilon}^{2 \beta}
  +
 div 
 \left(
 \Psi_{R}^{2} g_{\epsilon}^{\gamma} dg 
 -(\frac{1}{2 \beta})
 d(\psi_{R}^{2})g_{\epsilon}^{2\beta}
 \right).
 \end{align}
 
 If we let $ g_{R, \epsilon} = \Psi_{R} g_{\epsilon}$, and integrate
\eqref{E:gepsilonweightedsublaplacianineq}, we get
\begin{align}\label{E:reversepoincare}
&(\frac{1}{2})
(\frac{\gamma}{\beta^{2}} )\int_{\Omega} \vert d g_{R, \epsilon} \vert^{2} \, (dx)^{n} \leq \\
&\int_{\Omega} u \vert g_{R, \epsilon} \vert ^{2}  \, (dx)^{n}  + 
 \int_{\Omega} u g_{\epsilon}^{\gamma} \Psi_{R}^{2} \, (dx)^{n} +
C_{\beta} \int_{\Omega} (\vert \triangle ( \Psi_{R}^{2} ) \vert 
+ \vert d \Psi_{R} \vert^{2})
g_{\epsilon}^{2 \beta} \, (dx)^{n}. 
\end{align}
There is no contribution from the divergence term, as
$ ( \Psi_{R}^{2} )g_{\epsilon}^{2} dg + \frac{\gamma}{\beta^{2}} \Psi_{R} [d( \Psi_{R} )] g_{\epsilon}^{2 \beta}$ vanishes on 
$\partial \Omega \cup g^{-1}(\epsilon) \cup \{ s(x) \leq R \}$.
Because $g^{-1}(\epsilon)$ is a smooth submanifold, one can verify the computation at $g^{-1}(\epsilon)$ by a one dimensional argument.
Now from the definition of $\Psi_{R}$ and $\mathcal{S}$
\begin{equation}\label{E:upperestimateoflappsisubRsquared}
\vert \triangle  \Psi_{R}^{2} \vert 
+ \vert d \Psi_{R} \vert^{2}
\leq 
(\frac{1}{R}) \Vert d \Psi^{2} \Vert_{L{\infty}} \vert d^{2} s \vert
+(\frac{1}{R^{2}})+
\left( 
\Vert d^{2} \Psi^{2}
\Vert_{L^{\infty}}
+ \Vert d \Psi \Vert_{L^{\infty}}^{2} \vert ds \vert^{2}
\right)
\leq
(\frac{\hat{K}}{R^{2}}).
\end{equation}
Apply this inequality \eqref{E:upperestimateoflappsisubRsquared} , inequality
\eqref{E:reversepoincare},
and Theorem \ref{T:morreypoincareineq}
to get
\begin{align}\label{E:secondreversepoincareestimate}
& (\frac{1}{2} )( \frac{\gamma}{\beta^{2}} ) 
\int_{\Omega} \vert d g_{R, \epsilon} \vert^{2} \,
(dx)^{n} \leq
\lambda^{-1} \Vert u \Vert_{X^{2}(\Omega)} \int_{\Omega} \vert d g_{R, \epsilon} \vert^{2} \, (dx)^{n} \\ \notag
 & + \epsilon \Vert u \Vert_{L^{2}(\Omega)} \int_{\Omega}
\Vert g_{\epsilon}^{\gamma} 
\Vert_{L^{2}(\Omega)}
+
( C (\beta) ) \hat{K} \bar{K}
\left(
 \int_{s(x) \leq 2R} \vert g_{\epsilon} \vert^{4 \beta} \, (dx)^{n}
 \right)^{\frac{1}{2}}.
\end{align}

Since $ lim_{2R \downarrow 0 } \int_{s(x) \leq 2R } \vert g_{\epsilon}^{4\beta}(dx)^{n} =0$
, if $\Vert u  \Vert_{X^{2}(\Omega	)} < (\frac{\gamma}{2\beta}) \lambda$,
then \\
$g_{0} = \lim_{ \substack	{ R \rightarrow 0 \\ \epsilon \rightarrow 0 }}
g_{R, \epsilon}
=0 $.
\end{proof}
\appendix[{ \bfseries Appendix C Coulomb Gauges}]\label{A:coulombgauges}\\
In order to get further regularity beyond an estimate on $ F_{A} =dA + \frac{1}{2}
[A,A] $, it is necessary to construct a Coulomb gauge, i.e. a local trivialization in which $D_{A} = d+ A$, and $d^{*} A =0$, and to control
the norm of $A$ by a norm of $F_{A}$. Tian and Tao [10] do this in a very weak setting but their proof is very difficult. In our situation we can assume
a bound on $F$ in $L^{p}$ for $p$ large, and their proof simplifies enormously.
We include it here for completeness.
In the following $ B_{y}(\delta) \colon = \{x \colon \vert x-y \vert \leq \delta   \}$, and $B \colon = \{ x \colon \vert x \vert \leq 1  \}$.
\begin{theorem}\label{T:coulombgauge}
Let $\nabla_{\hat{A}} $ be a connection that is smooth on $\Omega - \mathcal{S}$,
, where $\mathcal{S}$ is a closed singular set with finite $n-3$ dimensional
Hausdorff measure. Suppose $F_{\hat{A}} \in L^{p}(\Omega)$ for $p> n$. Then
for each $\epsilon >0$, every point $y \in \Omega$ is the center of a ball 
$B_{y}(\delta) \subset \Omega$, such that 
\begin{equation}\label{E:lpboundonFindeltaball}
\int_{B_{y}(\delta)} \vert F_{\hat{A}} \vert^{p} \, (dx)^{n}
\leq \epsilon^{p} \delta^{-2p-n}
\end{equation}. \\
Note that $\delta$ depends on $\epsilon$.
\end{theorem}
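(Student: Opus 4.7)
The statement reduces to absolute continuity of the Lebesgue integral applied to $|F_{\hat{A}}|^p \in L^{1}(\Omega)$. Because $\mathcal{S}$ has finite $(n-3)$-dimensional Hausdorff measure, it has zero $n$-dimensional Lebesgue measure; in particular it is negligible for every integration that appears, and the global bound $M := \|F_{\hat{A}}\|_{L^{p}(\Omega)} < \infty$ coming from the hypothesis is genuinely available even though $\nabla_{\hat{A}}$ is only smooth off $\mathcal{S}$.

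For any $y \in \Omega$ and any $\delta > 0$, the crude monotonicity of the integral over balls gives $\int_{B_{y}(\delta)} |F_{\hat{A}}|^{p}\,(dx)^{n} \leq M^{p}$. On the other hand, the target bound $\epsilon^{p}\delta^{-2p-n}$ is a decreasing function of $\delta$ which tends to $+\infty$ as $\delta \to 0^{+}$, because the exponent $-2p-n$ is negative. So I would choose $\delta$ small enough that
\begin{equation*}
\delta \leq (\epsilon/M)^{p/(2p+n)}, \qquad \delta < \mathrm{dist}(y,\partial\Omega).
\end{equation*}
Both upper bounds are strictly positive at every interior point $y$, so the minimum produces the desired radius $\delta = \delta(y,\epsilon)$, and the ball $B_{y}(\delta)$ automatically sits inside $\Omega$.

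There is no serious analytic obstacle at this step: the claim as stated imposes no uniformity in $y$, so absolute continuity is all that is required; no finer structure of $F_{\hat{A}}$ near $\mathcal{S}$ enters. The genuine work of the appendix---turning $L^{p}$-smallness of $F_{\hat{A}}$ on a small ball into an actual Coulomb trivialization of $\nabla_{\hat{A}}$---lies beyond this statement and is handled separately via the continuity method. If a scale-invariant form of the bound were desired (as is convenient for rescaling $B_{y}(\delta)$ to the unit ball, where the rescaled curvature $\tilde{F}(z) = \delta^{2} F_{\hat{A}}(y+\delta z)$ satisfies $\|\tilde{F}\|_{L^{p}(B(1))} = \delta^{2 - n/p}\|F_{\hat{A}}\|_{L^{p}(B_{y}(\delta))}$), exactly the same absolute-continuity argument applies with the exponent $p/(2p+n)$ replaced by $p/(2p-n)$, which is positive since $p > n$; this feeds directly into the subsequent Coulomb-gauge construction used in Corollary~\ref{C:gaugeequivalencetosmoothconnection}.
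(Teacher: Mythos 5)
Your proof is correct and takes essentially the same route as the paper, which simply declares this first statement ``clear'': the bound follows from the crude global estimate $\int_{B_{y}(\delta)}\vert F_{\hat{A}}\vert^{p}\,(dx)^{n}\leq \Vert F_{\hat{A}}\Vert_{L^{p}(\Omega)}^{p}$ together with the blow-up of $\epsilon^{p}\delta^{-2p-n}$ as $\delta\downarrow 0$, exactly as you choose $\delta\leq\min\bigl((\epsilon/M)^{p/(2p+n)},\,\mathrm{dist}(y,\partial\Omega)\bigr)$. Your closing observation is also well taken: the exponent $-2p-n$ appears to be a slip for $n-2p$, since the scale-invariant bound $\epsilon^{p}\delta^{\,n-2p}$ is what the paper's subsequent rescaling to the unit ball actually uses, and your identical argument with exponent $p/(2p-n)>0$ (using $p>n$) covers that case.
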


For such an $\epsilon >0$, there exists a local trivialization in which the
connection $\nabla_{A}$ induces a local exterior covariant differential
$D_{A} = d+A$ such that $A$ satisfies $d^{*} A =0$ and
\begin{equation}\label{E:LpboundonAbylpboundonF}
\Vert A \Vert_{L^{p}(B_{y}(\delta))}
\leq 
\delta_{A} C(p,n)
\Vert F_{A} \Vert_{L^{p}(B_{y}(\delta))}=
\delta_{A} C(p,n)
\Vert F_{\hat{A}} \Vert_{L^{p}(B_{y}(\delta))}.
\end{equation}

\begin{proof}
The first statement is clear. Moreover, for interior domains of $\Omega$ there
is a uniform covering of such a domain by such balls. Choose a coordinate system for such a ball, ( $\tilde{x} \colon = \frac{x-y}{\delta}$ ) that transforms this ball into the unit ball.
In this rescaled system
\begin{equation}\label{E:rescaledlpnormofF}
\int_{B} \vert F_{\hat{A}} \vert^{p} \, (d \tilde{x} )^{n} \leq \epsilon^{p}
\end{equation}
 and the conclusion is that there exists a trivialization at the scale of $B$
such that:
\begin{equation}\label{E:rescaledLpboundonAbylpboundonF}
\Vert A \Vert_{L^{p}(B)} \leq C(p,n)
\Vert F_{A} \Vert_{L^{p}(B)}
\leq C(p,n) \epsilon.
\end{equation}
In the rescaled system $0 \, \text{ is not in} \, \mathcal{S}$. Parallel translate the fiber at $0$ along every ray in $B$ until each ray intersects $\mathcal{S}$.
This provides a smooth trivialization of the bundle over $B - \mathcal{S'}$,
where $\mathcal{S'} \colon = \{\lambda y \mid y \in \mathcal{S}, \lambda >0   \} $.
Then, $\mathcal{S'}$ is a closed set of finite $n-2$ dimensional Hausdorff measure. In this gauge (trivialization) $x^{k} A^{k} =0$ (for less cluttered notation we drop the subscript $A$ on $F_{A}$) and
\begin{equation}\label{E:exponentialgaugeestimate}
\frac{\partial}{\partial r}(r A_{j}) = 
A_{j}+ x^{k} F_{k,j} + x^{k} \frac{\partial}{\partial x^{j}} (A_{k} ) =
F_{k,j}.
\end{equation}
Integrating equation \eqref{E:exponentialgaugeestimate} we get
\begin{equation}\label{E:radiallyintegratedexponentialgaugeestimate}
r A_{j} = \int_{0}^{r} \rho F_{\rho,j} \, d \rho
\end{equation}
where $\rho F_{\rho,j} = x^{k} F_{k,j}$. Then
\begin{align}\label{E:firstexponentialgaugeintegralbound}
&
\vert A \vert^{p} r^{p} \leq 
\left(
\int_{0}^{r} \vert F \vert \rho \, d \rho 
\right)^{p}  \leq
\left(
\int_{0}^{r}  \left( \vert F \vert \rho^{\alpha} \right)^{p} \, d \rho 
\right)
\left( \int_{0}^{r}
\left(
\rho^{1-\alpha}
\right)^{\frac{p}{p-1}}
\right)^{p}\\ \notag
&\leq \left(\frac{r}{\beta}
\right)^{p-1} \int_{0}^{r} \vert F \vert^{p} \rho^{\alpha p} \, d \rho.
\end{align}
We set $\alpha = \frac{n-1}{p}$, and compute:
\begin{equation}\label{E:lowerboundonbeta}
\beta 
\left(\frac{(1-\alpha p) }{p-1} +1
\right)	 = \frac{2p- \alpha p -1}{p-1} = \frac{2p-n}{p-1} > 1.
\end{equation}
Integrating $ \vert A \vert^{p} r^{p}$ again in $r$ and also in the spherical angle, gives:
\begin{equation}\label{E:sphericalcordinateformoflpboundofAbylpnormofF}
\int_{0}^{1} \int_{S^{1}} r^{n-1} \vert A \vert^{p} \, d \theta dr
\leq
\int_{0}^{1} r^{p-1} \, dr
\left(
\int_{S^{1}} \int_{0}^{1}  \rho^{n-1} \vert F \vert^{p} \, d \rho d\theta
\right).
\end{equation}
Hence
\begin{equation}\label{E:shortformoflpestimateofAintermsoflpestimateofF}
\Vert A \Vert_{L^{p}(B)} 
\leq
\Vert F \Vert_{L^{p}(B)}
\leq \epsilon.
\end{equation}
Now consider the equation for $g = e^{u}$:
\begin{equation}\label{E:gaugechangeofAequation}
d^{*} ( g^{-1} dg + g^{-1} A g ) =s,
\end{equation}
 which is a smooth map from $\{u, A\}$ (with $u \in
 L^{p}_{1}
 (B) )
\subset C^{0}(B)$ and $A \in L^{p}(B) $) to
$s \in L^{p}_{-1}(B)$.
Since at $u=0$ the linearization 
is $\triangle u + A$, and $\triangle \colon L^{p}_{0}(B) \rightarrow
L^{p}_{-1}(B)$ is invertible, equation \eqref{E:gaugechangeofAequation} is solvable for small $A$ for $u$ near $0$ in $L^{p}_{1,0}(B)$.
Now, in the new gauge, $\tilde{A} =g^{-1} dg + g^{-1} A g $, and
$F_{\tilde{A}} = g^{-1} F_{A} g$. We have:
\begin{gather}
	d^{*} \tilde{A} = 0 \\
	d \tilde{A} + (\frac{1}{2}) [\tilde{A}, \tilde{A} ] = F_{\tilde{A}}.
\end{gather}
and,
\begin{align}\label{E:lpnormofAtildeestimatedbylpnormofFtilde}
&\Vert \tilde{A} \Vert_{L^{p}(B)} \leq 
\Vert dg \Vert_{L^{p}(B)} + \Vert A \Vert_{L^{p}(B)} \\ \notag
&\leq  (C_{p} +1) \Vert A \Vert_{L^{p}(B)} 
\leq  (C_{p} +1) \Vert F_{A} \Vert_{L^{p}(B)}
\leq  (C_{p} +1) \Vert \tilde{F}_{A} \Vert_{L^{p}(B)} 
\end{align}
 Here $C_{p}$ is roughly the norm of the inversion of
$\triangle \colon L^{p}_{1,0}(B) \rightarrow L^{p}_{-1}(B)$.
\end{proof}
This construction gives a gauge transformation between smooth connections on
$B -\mathcal{S}$.
Because the singular set $\mathcal{S}$ is of Hausdorff codimension
at least two, the gauge transformation, which is a map to a compact group,
must extend over $B- \mathcal{S}'$.
By the same argument, the singularities can only be removed in one way.
\begin{corollary}\label{C:interiorregularityestimate}
On $B(\frac{1}{2})$, we have $\tilde{A} \in L^{p}_{1}(B(\frac{1}{2}) )$.
\end{corollary}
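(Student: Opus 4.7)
The plan is to bootstrap the elliptic system
\begin{equation}
d^{*}\tilde{A} = 0, \qquad d\tilde{A} = F_{\tilde{A}} - \tfrac{1}{2}[\tilde{A},\tilde{A}],
\end{equation}
established in Theorem \ref{T:coulombgauge}, together with the starting information $\tilde{A} \in L^{p}(B)$ and $F_{\tilde{A}} \in L^{p}(B)$ for $p > n$. Because the Coulomb condition holds, applying $d^{*}$ to the second equation gives $\Delta \tilde{A} = d^{*}F_{\tilde{A}} - \tfrac{1}{2} d^{*}[\tilde{A},\tilde{A}]$, so interior $L^{q}$--estimates for $\Delta$ (equivalently, for the first order operator $d+d^{*}$ subject to the Coulomb condition) transfer information from the right-hand side of the curvature identity to one more derivative of $\tilde{A}$ on any slightly smaller ball.

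The bootstrap proceeds as follows. If at some stage $\tilde{A} \in L^{q}(B(r))$, then $[\tilde{A},\tilde{A}] \in L^{q/2}(B(r))$, and since $F_{\tilde{A}} \in L^{p}(B)$, one obtains $d\tilde{A} \in L^{\min(p,q/2)}(B(r))$ with $d^{*}\tilde{A} = 0$, hence $\tilde{A} \in L^{\min(p,q/2)}_{1}(B(r'))$ for any $r' < r$, with constants depending on $r - r'$. Sobolev embedding then yields a new $L^{q'}$--estimate with $q' > q$: when $\min(p,q/2) < n$ we gain the exponent $q' = n\min(p,q/2)/(n-\min(p,q/2))$, while when $\min(p,q/2) \geq n$ we embed into $L^{r}$ for all finite $r$, in particular into $L^{\infty}$. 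The crucial point is that at the first step $q_{0} = p/2$ and the inequality $np/(2n-p) > p$ is equivalent to $p > n$, so the integrability strictly improves. Iterating finitely many times, with the number of steps bounded uniformly in $p$ and $n$, the exponent eventually exceeds $n$ and $\tilde{A} \in L^{\infty}(B(r_{N}))$ for some $r_{N} > 1/2$.

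Once $\tilde{A} \in L^{\infty}(B(r_{N}))$, the commutator $[\tilde{A},\tilde{A}]$ is bounded on $B(r_{N})$ and hence lies in $L^{p}(B(r_{N}))$, so $d\tilde{A} \in L^{p}(B(r_{N}))$ together with $d^{*}\tilde{A} = 0$. A final application of interior elliptic regularity produces $\tilde{A} \in L^{p}_{1}(B(1/2))$, provided the shrinking radii $1 = r_{0} > r_{1} > \cdots > r_{N} > 1/2$ are arranged with a fixed fractional loss at each stage, which is possible because $N$ depends only on $p$ and $n$. The main obstacle is purely bookkeeping: keeping track of the quadratic nonlinearity $[\tilde{A},\tilde{A}]$ through the bootstrap so that the Sobolev gain strictly outpaces the loss from squaring, a balance that is guaranteed precisely by the hypothesis $p > n$.
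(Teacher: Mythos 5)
Your proposal is correct and follows essentially the same route as the paper: the paper's proof simply cites the bound $\Vert \tilde{A} \Vert_{L^{p}(B)} \leq (C_{p}+1)\Vert F_{\tilde{A}} \Vert_{L^{p}(B)}$ together with the Coulomb system $d^{*}\tilde{A}=0$, $d\tilde{A}+\frac{1}{2}[\tilde{A},\tilde{A}]=F_{\tilde{A}}$ and invokes "standard interior elliptic regularity," which is exactly the bootstrap you spell out (your use of $p>n$ to make the Sobolev gain beat the squaring of the nonlinearity is the point being suppressed). The only cosmetic caveat is the borderline exponent: the embedding into $L^{\infty}$ requires $\min(p,q/2)$ strictly greater than $n$, which your strictly increasing iteration supplies after one extra step.
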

\begin{proof}
Since
\begin{equation}
\Vert \tilde{A} \Vert_{L^{p}(B)} \leq 
 (C_{p} +1) \Vert \tilde{F}_{A} \Vert_{L^{p}(B)},
\end{equation}
and since $\tilde{A}$ solves the elliptic system:
\begin{gather}
d^{*} \tilde{A} =0 \\
d \tilde{A} + \frac{1}{2}[ \tilde{A}, \tilde{A} ] = \tilde{F}, 
\end{gather}
 standard interior elliptic regularity gives an estimate of
$\Vert \tilde{A} \Vert_{L^{p}_{1}(B(\frac{1}{2}) }$ in terms
of 
$\Vert F_{\tilde{A} }\Vert_{L^{p}(B )}$.
\end{proof}
Note that we do not set the radial $x^{i}A_{i}$ equal to zero on the boundary. We only use an interior regularity estimate.

 \end{document}